\documentclass[10pt,a4paper]{amsart} 

\usepackage[utf8]{inputenc}
\usepackage[T1]{fontenc}
\usepackage{amsmath, amssymb}
\usepackage{amsthm}
\usepackage{mathtools}   

\usepackage{xspace} 
\usepackage{xfrac}  
\usepackage{xifthen} 
\usepackage{enumitem} 

\usepackage{hyperref}
\hypersetup{
    unicode=true,          
    pdftitle={Discretized Sum-Product Estimates in Matrix Algebras},    
    pdfauthor={He Weikun},     
    pdfsubject={Arithmetic combinatorics},   
    pdfkeywords={Discretized}{Sum-product}, 
    pdfborder={0 0 0},
    colorlinks=true,       
    linkcolor=black,          
    citecolor=black,        
    filecolor=black,      
    urlcolor=black,           
    pagebackref=true
}

\setenumerate[0]{label=(\roman*)}   


\renewcommand{\phi}{\varphi} 

\newcommand\dash{\nobreakdash-\hspace{0pt}}

\newtheorem{thm}{Theorem}
\newtheorem{coro}[thm]{Corollary}
\newtheorem{prop}[thm]{Proposition}
\newtheorem{lemm}[thm]{Lemma}

\theoremstyle{definition}

\newtheorem*{déf}{Definition}
\newtheorem*{rmq}{Remark}

\def\N{{\mathbb N}}
\def\Z{{\mathbb Z}}
\def\R{{\mathbb R}}
\def\C{{\mathbb C}}

\def\dd{{\,\mathrm{d}}}
\def\HH{{\mathbb H}}

\def\CC{{\mathcal C}}  
\def\UC{{\mathcal U}}  

\newcommand{\gf}{\mathfrak{g}} 

\def\OB{{\mathbf O}}  

\DeclareMathOperator{\End}{End}
\DeclareMathOperator{\Mat}{\mathcal{M}}

\DeclareMathOperator{\SL}{SL}
\DeclareMathOperator{\GL}{GL}

\DeclareMathOperator{\SO}{SO}
\DeclareMathOperator{\SU}{SU}

\DeclareMathOperator{\Id}{Id}

\DeclareMathOperator{\Ball}{\mathbf{B}}


\DeclareMathOperator{\Gr}{Gr}        



\DeclareMathOperator{\Span}{Span}
\DeclareMathOperator{\Supp}{Supp}
\DeclareMathOperator{\tr}{tr}

\DeclareMathOperator{\En}{\omega}        
\DeclareMathOperator{\ad}{ad}            
\DeclareMathOperator{\diag}{diag}


\newcommand{\transp}[1]{\prescript{t}{}{#1}} 
\newcommand{\abs}[1]{\lvert#1\rvert}    
\newcommand{\norm}[1]{\lVert#1\rVert}   

\newcommand{\ceil}[1]{\left\lceil#1\right\rceil}   
\newcommand{\sg}[1]{\left<#1\right>}               

\newcommand{\inv}[1]{#1^{-1}}                  
\newcommand{\Inv}[1]{\frac{1}{#1}}               

\newcommand{\ens}[1]{\left\lbrace#1\right\rbrace}     
\newcommand{\ensA}[1]{\{1,\dotsc,#1\}}     




\newcommand{\Prob}[2][]{\ifthenelse{\equal{#1}{}}   
            {\mathbb{P}}               
            {\mathbb{P}_{#1}}\bigl[#2\bigr]}

\newcommand{\Espr}[2][]{\ifthenelse{\equal{#1}{}}   
            {\mathbb{E}}               
            {\mathbb{E}_{#1}}\bigl[#2\bigr]}

\newcommand{\muex}[1][]{\ifthenelse{\isempty{#1}}   
            {\mu_{\mathrm{ex}}}               
            {\mu_{\mathrm{ex}}^{#1}}}

\newcommand{\smatrixp}[1]{\bigl( \begin{smallmatrix} #1 \end{smallmatrix} \bigr)} 

\newcommand{\Ncov}[1][\delta]{\mathcal{N}_{#1}}
\newcommand{\Sset}[1][\delta]{S_{#1}}

\hyphenation{mem-bre con-tient}

\renewcommand{\bullet}{\boldsymbol{\,\cdot\,}}

\title{Discretized sum-product estimates in matrix algebras}

\author{Weikun He}
\address{Laboratoire de Mathématiques d'Orsay, Univ. Paris-Sud, Université Paris-Saclay, 91405 Orsay, France.}
\email{weikun.he@math.u-psud.fr}

\begin{document}

\begin{abstract}
We generalize Bourgain's discretized sum-product theorem to matrix algebras.
\end{abstract}

\maketitle

\section{Introduction}

Let $E$ be a ring. For subsets $A$ and $B$ of $E$, write
\[-A = \ens{-a\mid a \in A},\]
\[A + B = \ens{a + b \mid a \in A, b \in B}\]
and
\[A\cdot B = \ens{ab \mid a \in A, b \in B}.\]
Let $s$ be a positive integer, we write $sA$ for the $s$-fold sum-set $A + \dotsb + A$ and $A^s$ for the $s$-fold product-set $A \dotsm A$. Moreover, we define recursively $\sg{A}_1 = A\cup(-A)$ and for all positive integers $s$,
\[\sg{A}_{s+1} = \sg{A}_{s}\cup \bigl(\sg{A}_{s} + \sg{A}_1\bigr) \cup \bigl(\sg{A}_{s}\cdot \sg{A}_1\bigr).\]
In other words, $\sg{A}_{s}$ is the set of elements obtained from at most $s$ elements of $A\cup(-A)$ by adding and multiplying them.

Roughly speaking, the sum-product problem asks, given a set $A$, whether $A$ grows fast under addition, multiplication and subtraction and if not, what are the obstructions. Thus, a sum-product estimate is a lower bound for the size of $\sg{A}_s$ with $s \geq 2$. The first result of this type is due to Erd\H{o}s and Szemerédi~\cite{ErdosSzemeredi} for the ring of reals $\R$. Since then, numerous works followed, either establishing sum-product estimates for broader classes of rings or improving existing bounds. See~\cite{Tao_Ring} for an elegant treatment and more history. In most of these estimates, the size of a set is measured by its cardinality. 
\subsection{Bourgain's discretized sum-product theorem} 

In \cite{KatzTao}, Katz and Tao introduced the $\delta$-discretized setting where the size of a set is measured by its covering number by $\delta$-balls. More precisely, assume that the ring $E$ comes with a distance. We denote by $\Ball(x,\rho)$ the closed ball with center $x \in E$ and radius $\rho > 0$. Let $\delta > 0$ be a positive real number, which will be referred to as the scale. For a bounded subset $A$ of $E$, its covering number $\Ncov(A)$ is the minimal number of points $x_1,\dotsc,x_N$ such that closed balls $\Ball(x_1,\delta), \dotsc, \Ball(x_N,\delta)$ cover $A$. Katz and Tao conjectured a sum-product estimate in this discretized setting for the ring $\R$. The conjecture was solved by Bourgain in~\cite{Bourgain2003,Bourgain2010} :

\begin{thm}[Bourgain~\cite{Bourgain2010}]\label{thm:Bourgain}
Given $\kappa > 0$ and $\sigma < 1$, there is $\epsilon > 0$ such that the following holds for $\delta > 0$ sufficiently small. Let $A$ be a subset of $\R$, assume that
\begin{enumerate}
\item $A \subset \Ball(0,\delta^{-\epsilon})$,
\item $\forall \rho \geq \delta$, $\Ncov[\rho](A) \geq \delta^\epsilon \rho^{-\kappa}$,
\item $\Ncov(A) \leq \delta^{-\sigma - \epsilon}$.
\end{enumerate}
Then,
\begin{equation}\label{eq:sumNproduct}
\Ncov(A + A) + \Ncov(A\cdot A) \geq \delta^{-\epsilon} \Ncov(A).
\end{equation}
\end{thm}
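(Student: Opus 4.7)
The plan is to argue by contradiction. Fix $\kappa > 0$ and $\sigma < 1$, and let $\epsilon > 0$ be a small parameter to be determined at the end. Suppose $A \subset \R$ satisfies (i)--(iii) yet $\Ncov(A + A) + \Ncov(A \cdot A) \leq \delta^{-\epsilon} \Ncov(A)$. The strategy is to promote this twofold small-doubling hypothesis into an approximate subring structure on $A$ at scale $\delta$, and then rule out such a structure via the non-concentration hypothesis (ii) at some intermediate scale $\rho \in (\delta, 1)$.

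The first step is to establish a discretized Plünnecke--Ruzsa calculus by translating the Ruzsa covering and triangle inequalities into the language of $\delta$-covering numbers (see~\cite{Tao_Ring} for the cardinality analogues). This yields, for each fixed $s$,
\[
\Ncov(sA) + \Ncov(A^s) \leq \delta^{-C_s \epsilon} \Ncov(A).
\]
The algebraic identity $(a + b)(c + d) = ac + ad + bc + bd$, combined with a discretized Balog--Szemerédi--Gowers argument, then produces a large subset $A' \subset A$ with $\Ncov(A') \geq \delta^{O(\epsilon)} \Ncov(A)$ such that every polynomial expression $P(a_1, \ldots, a_n)$ of bounded length and degree in elements of $A'$ satisfies
\[
\Ncov\bigl(P(A', \ldots, A')\bigr) \leq \delta^{-O(\epsilon)} \Ncov(A').
\]
In other words, $A'$ behaves as an approximate subring of $\R$ at scale $\delta$.

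The final step is to contradict (ii) using this approximate-ring structure. Let $\mu$ denote the $\delta$-smoothing of the uniform probability measure on $A'$. The approximate-ring property implies that both the additive self-convolution $\mu \ast \mu$ and the pushforward of $\mu \otimes \mu$ under multiplication remain $L^2$-close to $\mu$. Bourgain's multi-scale flattening argument, which is the analytic heart of~\cite{Bourgain2010}, then produces a scale $\rho \in (\delta, 1)$ at which the mass of $\mu$ concentrates more than the bound $\Ncov[\rho](A') \geq \delta^{O(\epsilon)} \rho^{-\kappa}$ inherited from (ii) allows, yielding the desired contradiction. The assumption $\sigma < 1$ from (iii) is crucial here, ensuring that the relevant dimension lies strictly below the ambient dimension $1$.

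The main obstacle is precisely this last step. Bourgain's flattening lemma requires a delicate multi-scale induction in which the $\delta^{-O(\epsilon)}$ losses accumulated at each level must be tracked so that the total remains subpolynomial after $O(\log(1/\delta))$ iterations. The quantitative dependence of $\epsilon$ on $(\kappa, 1 - \sigma)$ emerges from balancing the number of iterations against these per-step losses; Steps 1 and 2 by contrast are comparatively routine combinatorial bookkeeping.
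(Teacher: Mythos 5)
The paper does not prove Theorem~\ref{thm:Bourgain}; it quotes it as a known result of Bourgain from~\cite{Bourgain2010} and uses it only as motivation and context for the generalizations it actually proves (Theorems~\ref{thm:SumProduct2}, \ref{thm:ActionRn}, and~\ref{thm:SumProduct}). There is therefore no internal proof for your proposal to be compared against, and the referencing style of the paper (a bracketed attribution in the theorem header) signals this explicitly.

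On the merits of your sketch taken on its own: the overall shape — argue by contradiction, run a discretized Pl\"unnecke--Ruzsa calculus, pass through a Balog--Szemer\'edi--Gowers step plus the distributive identity to extract an approximate subring, and then invoke a multi-scale flattening argument against non-concentration — is a reasonable folklore summary, but it is not a proof. The decisive difficulty is entirely contained in the step you label ``Bourgain's multi-scale flattening argument, which is the analytic heart of~\cite{Bourgain2010}'', and you do not supply it; what you have written is a citation of Bourgain's theorem dressed as an outline of its proof. Two further gaps are worth naming concretely. First, small covering numbers for $A+A$ and $A\cdot A$ give you small doubling and small product-set, but passing from there to a subset $A'$ on which \emph{every} bounded-length polynomial expression has small covering number is itself a nontrivial ring-growth theorem (cf.~\cite{Tao_Ring}), and the distributive identity alone does not deliver it: one must rule out the near-field obstructions and control iterated expressions, which in the discrete setting already requires an argument. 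Second, the reduction from the combinatorial approximate-ring structure on $A'$ to the $L^2$ setup that a flattening iteration consumes (smoothed measure, regularization across scales, selection of a good dyadic tree) is the part of Bourgain's argument where the non-concentration hypothesis (ii) and the constraint $\sigma<1$ actually enter quantitatively; you gesture at this but do not explain how the multiplicative information is converted into the entropy gain per scale, which is exactly what distinguishes this theorem from a purely additive statement. As it stands, the proposal describes the landscape correctly but does not climb the mountain.
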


This theorem is of very different nature from its discrete analogue~\cite{ErdosSzemeredi}. Its proof required an involved multi-scale analysis. Additive structure is analyzed at each level (using a quantitative version of Freiman's theorem due to Chang~\cite{Chang2002} in Bourgain's first proof in~\cite{Bourgain2003}) and then multiplicative information is used to show growth in size.

When the discretized setting was introduced in~\cite{KatzTao}, it was conceived as a general strategy for proving results in the continuous regime (where sets are measured by its Hausdorff dimension) while using ideas and results in the discrete regime where we have well-developed theory such as arithmetic combinatorics. Thus, one of the original motivations of Bourgain's theorem was the Erd\H{o}s-Volkmann ring conjecture~\cite{ErdosVolkmann} which asserts that no Borel measurable subring of $\R$ has Hausdorff dimension strictly between $0$ and $1$. This conjecture was also settled by Edgar and Miller~\cite{EdgarMiller} using different ideas. However, their proof is not quantitative. Moreover, as anticipated in~\cite{KatzTao}, Bourgain's theorem also makes progress in geometric measure theory, namely on the Falconer distance problem and the Furstenberg conjecture.

It turns out that the influence of Bourgain's theorem does not stop here. More deep works have been done using this theorem. For example, it allowed Bourgain and Yehudayoff~\cite{BourgainYehud} to construct explicit monotone expanders. And, in~\cite{BourgainGamburd_SU2,BourgainGamburd_SU}, Bourgain and Gamburd showed a spectral gap result in $\SU(d), d\geq 2$. Benoist and Saxcé~\cite{BenoistSaxce} extended this work to all compact simple Lie groups. Introducing the notion of local spectral gap, Boutonnet-Ioana-Salehi Golsefidy~\cite{BISG} further generalized the result to non-compact settings. Both of these generalizations build on Saxcé's discretized product theorem~\cite{Saxce}, whose proof uses Bourgain's theorem in a crucial way. Let us also mention that combining Saxcé's product theorem with Fourier analysis on Lie groups, Lindenstrauss-Saxcé~\cite{LindenSaxce} and Saxcé~\cite{Saxce_subgroup} obtained Erd\H{o}s-Volkmann type results for Borel measurable subgroups in simple Lie groups.

In the same paper~\cite{Bourgain2010}, Bourgain deduced from his sum-product theorem a Marstrand type projection theorem in the discretized setting. This result is one of the main ingredients in the Bourgain-Furman-Lindenstrauss-Mozes theorem~\cite{BFLM} on the equidistribution for orbits of subsemigroups of $\SL_d(\Z)$ on the torus. This last result can be used to study stationary measures. Namely, they obtained a stiffness result. It's worth noting that the Bourgain-Furman-Lindenstrauss-Mozes theorem was applied by Bourgain and Varj\'u in~\cite{BourgainVarju} to show expansion in $\SL_d(\Z/q\Z)$ with $q$ an arbitrary integer.

\subsection{Statement of the main results}

The aim of this article is to generalize Bourgain's discretized sum-product theorem to matrix algebras. Let $E$ be a real algebra of finite dimension endowed with a norm that makes it a normed vector space. Such a structure will be called a normed algebra. We want to understand, given a bounded subset $A \subset E$ satisfying similar properties as in Theorem~\ref{thm:Bourgain}, how $\Ncov(\sg{A}_s)$ grows and whether \eqref{eq:sumNproduct} or similar estimates hold.

If we ask these questions for general real algebras, they can be as hard as the Freiman problem\footnote{Freiman problem asks, in a given abelian group, which subsets grow slowly under addition. Freiman's Theorem asserts that they are "close" to generalized arithmetic progressions. Obtaining a polynomial bound for this theorem is one of the fundamental open problems in additive combinatorics. See~\cite[Chapter 5]{TaoVu}.} as illustrated by the following example. Let $A_0$ be a bounded subset of $\R$ containing $0$. Consider $A$ the set of matrices of the form $\smatrixp{1 & a\\0 & 1}$ with $a \in A_0$. Then for any positive integer $s$, every element in $\sg{A}_{s}$ is of the form $\smatrixp{k & a\\0 & k}$ with $a \in s'A_0$ and $k \in \ens{-s', \dotsc, s'}$ where $s'$ is an integer depending on $s$. And conversely, for every $a \in sA_0$, we have $\smatrixp{s & a\\0 & s} \in \sg{A}_{s}$. Hence
\[\Ncov(sA_0) \leq \Ncov(\sg{A}_{s}) \ll_s \Ncov(s'A_0).\]
This means the growth of $A$ under addition and multiplication is to some extent equivalent to the growth of $A_0$ under only addition.

For this reason, we will restrict our attention to simple algebras. Recall that by the Wedderburn structure theorem and the Frobenius theorem, a simple real algebra of finite dimension is isomorphic to $\Mat_n(\R)$, $\Mat_n(\C)$ or $\Mat_n(\HH)$, the algebra of $n \times n$ matrices over the real numbers, the complex numbers, or the quaternions, for some $n \geq 1$. 

\begin{thm}\label{thm:SumProduct2}
Let $E$ be a normed simple real algebra of finite dimension. Given $\kappa > 0$ and $\sigma < \dim(E)$, there is $\epsilon > 0$ depending on $E$, $\kappa$ and $\sigma$ such that the following holds for $\delta > 0$ sufficiently small. Let $A$ be a subset of $E$, assume that
\begin{enumerate}
\item $A \subset \Ball(0,\delta^{-\epsilon})$,
\item \label{it:nonCon} $\forall \rho \geq \delta$, $\Ncov[\rho](A) \geq \delta^\epsilon \rho^{-\kappa}$,
\item \label{it:Alarge} $\Ncov(A) \leq \delta^{-\sigma - \epsilon}$,
\item \label{it:awayAlg} for every proper subalgebra $W \subset E$, there is $a \in A$ such that $d(a,W) \geq \delta^\epsilon$. 
\end{enumerate}
Then,
\begin{equation}\label{eq:SumProduct2}
\Ncov(A + A) + \Ncov(A + A\cdot A) \geq \delta^{-\epsilon} \Ncov(A).
\end{equation}
\end{thm}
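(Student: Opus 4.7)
The plan is to argue by contradiction and reduce to Bourgain's scalar theorem (Theorem~\ref{thm:Bourgain}) applied to a one-dimensional slice of a bounded iterate $\sg{A}_s$. Suppose~\eqref{eq:SumProduct2} fails at scale $\delta$ with parameter $\epsilon$. The first step is a discretized Plünnecke-Ruzsa inequality for covering numbers: from $\Ncov(A+A) + \Ncov(A + A\cdot A) < \delta^{-\epsilon}\Ncov(A)$ together with the boundedness~(i), one should obtain, for every fixed integer $s \geq 1$, a bound of the form $\Ncov(\sg{A}_s) \leq \delta^{-C(s)\epsilon}\Ncov(A)$. The additive half is the standard Ruzsa triangle / covering lemma transcribed to covering numbers; the multiplicative half uses precisely the $A + A \cdot A$ form of the hypothesis, which allows products to be absorbed into sums in a Plünnecke-style graph argument.

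Next, using condition~\eqref{it:awayAlg} iteratively, I would build a sequence $a_1, \ldots, a_m \in A$ with $m \leq \dim E$ such that each $a_{i+1}$ lies at distance at least $\delta^\epsilon$ from the unital subalgebra $\Alg(a_1, \ldots, a_i)$, until the generated algebra equals $E$. This yields a fixed integer $s_0 = O_E(1)$ and monomials of length $\leq s_0$ in the $a_i$ whose $\R$-span is all of $E$, with Gram determinant bounded below by $\delta^{O(\epsilon)}$. The heart of the proof is then the \emph{scalar extraction}: from $\sg{A}_{s_1}$, for some fixed $s_1 = O_E(1)$, one must produce a subset $A_0 \subset \R$ satisfying the hypotheses of Theorem~\ref{thm:Bourgain} for some parameters $\kappa', \sigma'$ depending on $\kappa, \sigma, \dim E$, and such that $A_0 + A_0$ and $A_0 \cdot A_0$ are embedded, through the chosen one-dimensional line $V \subset E$, into a further bounded iterate $\sg{A}_{s_2}$. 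Concretely, one fixes a line $V$ (for instance inside the centre $Z(E)$, where multiplication is intrinsic), pigeonholes in the remaining $\dim E - 1$ basis coordinates of elements of $\sg{A}_{s_1}$ to isolate a slice close to $V$, and takes $A_0$ to be the coordinate image of that slice. The non-concentration~\eqref{it:nonCon} of $A$ should transfer to $A_0$ with a loss controlled by the quantitative non-degeneracy of the generators from Step~2.

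Applying Theorem~\ref{thm:Bourgain} to $A_0$ then yields $\Ncov(A_0 + A_0) + \Ncov(A_0 \cdot A_0) \geq \delta^{-\epsilon_0}\Ncov(A_0)$ for some $\epsilon_0 > 0$ depending only on $\kappa', \sigma'$. Combined with the embedding into $\sg{A}_{s_2}$ and the transfer $\Ncov(A_0) \gtrsim \delta^{O(\epsilon)}\Ncov(A)$, this contradicts the Plünnecke-Ruzsa bound from Step~1 once $\epsilon$ is chosen sufficiently small compared to $\epsilon_0$. The hard part is the scalar extraction: constructing $A_0$ so that all three requirements --- Frostman-type non-concentration with only polynomial loss, polynomial embedding of its sum-product set into $\sg{A}_{O_E(1)}$, and size large enough (of order $\delta^{-\sigma'}$ with $\sigma'$ bounded below) to trigger Bourgain's theorem --- hold simultaneously. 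Both condition~\eqref{it:awayAlg} and the simplicity of $E$ play essential roles here; the simplicity is also what rules out the triangular matrix pathology exhibited in the introduction.
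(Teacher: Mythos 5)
Your Step 1 (Plünnecke–Ruzsa / small-tripling implies small $s$-pling, giving $\Ncov(\sg{A}_s) \leq \delta^{-O_s(\epsilon)}\Ncov(A) \leq \delta^{-\sigma - O_s(\epsilon)}$ from the failure of \eqref{eq:SumProduct2}) and the contradiction framing are exactly what the paper does — this is Lemma~\ref{lm:AplusAA}. From there, however, the paper does \emph{not} try to extract a one-dimensional slice of $\sg{A}_s$ and feed it into Theorem~\ref{thm:Bourgain}. Instead it invokes Theorem~\ref{thm:SumProduct} (the "fat ball" result proved in the bulk of the paper), which with $\epsilon_0 = \frac{\dim E - \sigma}{2\dim E}$ gives $\Ball(0,\delta^{\epsilon_0}) \subset \sg{A}_s + \Ball(0,\delta)$, hence $\Ncov(\sg{A}_s) \gg_E \delta^{-(1-\epsilon_0)\dim E} = \delta^{-\frac{1}{2}(\dim E + \sigma)}$. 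The exponent $\frac{1}{2}(\dim E + \sigma)$ is strictly larger than $\sigma$, and that alone contradicts the Plünnecke bound once $\epsilon$ is small. The deduction is three lines.

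The gap in your proposal is the "scalar extraction" step, which you flag yourself as the hard part but do not actually carry out. Pigeonholing the coordinates of $\sg{A}_{s_1}$ in a fixed basis to find a slice near a fixed line $V \subset Z(E)$ does not, in general, produce a subset $A_0 \subset \R$ satisfying Bourgain's hypotheses: (a) the projection of $\sg{A}_{s_1}$ onto $E/V$ may have covering number nearly as large as $\Ncov(\sg{A}_{s_1})$ itself, in which case the largest fiber is essentially a single ball and $A_0$ is far too small to have size $\delta^{-\sigma'}$ with $\sigma'$ bounded below; and (b) even when a fiber is large, there is no mechanism to guarantee it satisfies the Frostman non-concentration $\Ncov[\rho](A_0) \geq \delta^{\epsilon'}\rho^{-\kappa'}$ — non-concentration of $A$ in $E$ does not transfer to arbitrary one-dimensional Cartesian slices of $\sg{A}_{s_1}$. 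Producing many nearly commuting elements (hence a large, controlled slice near a diagonal/central structure on which multiplication is intrinsic) is exactly the subtle part of the argument. The paper achieves it not by pigeonholing into a fixed line but via the commutator map $\phi(x) = ax - xa$ with $a$ an element of $\sg{A}_s$ of $\delta^{O(\epsilon)}$-separated spectrum (obtained by the escape-from-subvarieties machinery, Lemma~\ref{lm:escapeSV} applied to the discriminant), together with the trace-set lower bound (Lemma~\ref{lm:traceSet}) to rule out $\phi(A)$ being large; the fiber of $\phi$ then consists of nearly simultaneously diagonalizable elements, and Bourgain–Gamburd's Theorem~\ref{thm:etaDelta} (for $\C^n$, not Theorem~\ref{thm:Bourgain} for $\R$) is applied to that fiber. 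None of this appears in your sketch, and without it the scalar extraction is not justified. In short: your reduction to a contradiction and your Plünnecke step are correct, but you have effectively re-posed the main difficulty of the paper — constructing a well-behaved near-abelian slice inside $\sg{A}_s$ — rather than resolved it.
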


The case $E = \C$ is due to Bourgain and Gamburd~\cite{BourgainGamburd_SU2}. So our result is new for $\dim(E) > 2$. Let us also mention the work of Chang~\cite{Chang2007} who investigated the sum-product problem for real matrices and the result of Tao in~\cite{Tao_Ring} concerning general algebras, both in the discrete context.
 
The assumption \ref{it:nonCon} is a non-concentration condition. It is to avoid the situation where $A$ is a union of a bounded number of small balls. A subset $A\subset E$ satisfying the condition \ref{it:awayAlg} will be said to be $\delta^\epsilon$-away from subalgebras. This is the additional condition compared to the one-dimensional case. Without it $A$ can be trapped in a small neighborhood of a proper subalgebra. Note the conclusion~\eqref{eq:SumProduct2} is slightly weaker than that of Theorem~\ref{thm:Bourgain}. Here, instead of $A\cdot A$, we need $A + A\cdot A$ to see the growth. Actually, the estimate~\eqref{eq:sumNproduct} fails under the same assumptions as soon as $\dim(E)$ is greater than $1$. Indeed, if $A$ is a union of a segment of unit length and an orthonormal basis of $E$, then the set $A$ satisfies the assumptions\footnote{The condition $\dim(E) > 1$ is needed to have assumption \ref{it:Alarge}.} of Theorem~\ref{thm:SumProduct2} but $A + A$ and $A\cdot A$ are both unions of a bounded number of unit segments. Thus \eqref{eq:sumNproduct} fails for such $A$.

Our second result concerns linear actions on Euclidean spaces. Let $X$ be a bounded subset of the Euclidean space $\R^n$. Let $A \subset \End(\R^n)$ be a collection of linear endomorphisms. We can ask whether $X$ grows under addition and transformation by elements of $A$, provided that $A$ is sufficiently rich.

\begin{thm}\label{thm:ActionRn}
Let $n$ be a positive integer. Given $\kappa > 0$ and $\sigma < n$, there is $\epsilon > 0$ such that the following holds for $\delta > 0$ sufficiently small. Let $A$ be a subset of $\End(\R^n)$ and $X$ a subset of $\R^n$, assume that
\begin{enumerate}
\item $A \subset \Ball(0,\delta^{-\epsilon})$,
\item $\forall \rho \geq \delta$, $\Ncov[\rho](A) \geq \delta^\epsilon \rho^{-\kappa}$,
\item \label{it:irreducible}for every proper nonzero linear subspace $W \subset \R^n$, there is $a \in A$ and $w \in W \cap \Ball(0,1)$ such that $d(aw,W) \geq \delta^\epsilon$.
\item $X \subset \Ball(0,\delta^{-\epsilon})$,
\item $\forall \rho \geq \delta$, $\Ncov[\rho](X) \geq \delta^\epsilon \rho^{-\kappa}$,
\item $\Ncov(X) \leq \delta^{-\sigma - \epsilon}$.
\end{enumerate}
Then,
\begin{equation}\label{eq:ActionRn}
\Ncov(X + X) + \max_{a \in A}\Ncov(X+aX) \geq \delta^{-\epsilon}\Ncov(X),
\end{equation}
where $aX = \ens{ax\mid x \in X}$.
\end{thm}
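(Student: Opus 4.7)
The plan is to reduce Theorem~\ref{thm:ActionRn} to Theorem~\ref{thm:SumProduct2} by embedding the pair $(A, X)$ into the simple matrix algebra $\End(\R^n) = \Mat_n(\R)$. The case $n = 1$ reduces directly to Theorem~\ref{thm:Bourgain}, so I assume $n \geq 2$ throughout. Since no upper bound on $\Ncov(A)$ is given, I first pass to a subset $A_0 \subset A$ with $\Ncov(A_0) \leq \delta^{-C\epsilon}$, chosen so that slightly weakened versions of condition~\ref{it:irreducible} and of the non-concentration on $A$ are preserved. This is possible because condition~\ref{it:irreducible} is quantitatively open and a $\delta^{O(\epsilon)}$-net of the Grassmannian of proper subspaces of $\R^n$ has size $\delta^{-O(\epsilon)}$; $A_0$ can be obtained by combining witnesses for such a net with a Frostman-like subset.

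Next, fix a unit vector $v_0 \in \R^n$ and consider the rank-one embedding $\phi: \R^n \to \Mat_n(\R)$, $\phi(x) = x v_0^T$. It is a linear isometry (with respect to the Frobenius norm) onto its $n$-dimensional image and satisfies the key identity $a\phi(x) = \phi(ax)$ for $a \in \Mat_n(\R)$. Setting $\tilde A = A_0 \cup \phi(X)$, we then have $\phi(X + X) \subset \tilde A + \tilde A$ and $\phi(X + A_0 X) \subset \tilde A + \tilde A \cdot \tilde A$. The boundedness, non-concentration, and size hypotheses of Theorem~\ref{thm:SumProduct2} (with $\sigma' < n < n^2 = \dim \Mat_n(\R)$) are inherited routinely from those of $A_0$ and $X$ via $\phi$.

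The main obstacle will be verifying condition~\ref{it:awayAlg}: that $\tilde A$ is $\delta^{c\epsilon}$-away from every proper subalgebra $W \subsetneq \Mat_n(\R)$. Suppose instead $\tilde A \subset \Nbd(W, \delta^{c\epsilon})$. Since $W$ is closed under bounded products and sums, iterated expressions $a_1 \cdots a_k \phi(x) b_1 \cdots b_\ell$ with $a_i, b_j \in A_0$ and $x \in X$ remain in $\Nbd(W, \delta^{c'\epsilon})$. Such a product equals a rank-one matrix $u \otimes w^T$ with $u = a_1 \cdots a_k x$ and $w = (b_\ell^T \cdots b_1^T) v_0$. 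By condition~\ref{it:irreducible}, the algebra $\Alg(A_0)$ acts irreducibly on $\R^n$ and, by duality, on $(\R^n)^*$. Combined with the non-concentration of $X$, this will let me argue that $u$ and $w$ sweep $\delta^{O(\epsilon)}$-spanning subsets of $\R^n$, so the rank-one matrices $u \otimes w^T$ span $\Mat_n(\R) = \R^n \otimes (\R^n)^*$ quantitatively, forcing $W = \Mat_n(\R)$ for $c$ chosen large enough. This step is delicate because over $\R$ an irreducibly acting subalgebra of $\Mat_n(\R)$ need not equal the full algebra (e.g.\ $\C \hookrightarrow \Mat_2(\R)$): condition~\ref{it:irreducible} on $A_0$ alone does not suffice, and it is the combination of $A_0$ with the rank-one set $\phi(X)$ that breaks the residual symmetries.

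Once condition~\ref{it:awayAlg} is verified, Theorem~\ref{thm:SumProduct2} yields $\Ncov(\tilde A + \tilde A) + \Ncov(\tilde A + \tilde A \cdot \tilde A) \geq \delta^{-\epsilon'} \Ncov(\tilde A) \geq \delta^{-\epsilon'} \Ncov(X)$. Decomposing the sumsets according to which type of $\tilde A$-element appears in each position, the contributions not lying in $\phi(X + X)$ or $\phi(X + A_0 X)$ are bounded by $\delta^{-C'\epsilon} \Ncov(X)$; this uses $\Ncov(A_0) \leq \delta^{-C\epsilon}$ together with the fact that the projection $v_0 \cdot X \subset \R$ has $\Ncov \leq \delta^{-1}$. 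Choosing $\epsilon$ small enough relative to $\epsilon'$ and $\kappa$ ensures these residual contributions cannot dominate, and we extract $\Ncov(X + X) + \Ncov(A_0) \max_{a \in A_0} \Ncov(X + aX) \geq \delta^{-\epsilon''} \Ncov(X)$ for some $\epsilon'' > 0$, which gives the desired estimate~\eqref{eq:ActionRn}.
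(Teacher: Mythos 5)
Your reduction via the rank-one embedding $\phi(x)=xv_0^T$ and $\tilde A = A_0\cup\phi(X)$ fails because the sumset $\tilde A + \tilde A\cdot\tilde A$ contains \emph{uncontrolled} contributions that are not reflected by $X+X$ or $X+aX$, so the conclusion of Theorem~\ref{thm:SumProduct2} is satisfied trivially without yielding~\eqref{eq:ActionRn}. Two such contributions are fatal. First, $\phi(X)\cdot\phi(X) = (v_0\cdot X)\phi(X)$, so $\phi(X)+\phi(X)\phi(X)=\phi\bigl(X + (v_0\cdot X)X\bigr)$; the scalar set $v_0\cdot X\subset\R$ has covering number as large as $\delta^{-1+O(\epsilon)}$, and $\Ncov\bigl(X+(v_0\cdot X)X\bigr)$ has no $\delta^{-O(\epsilon)}\Ncov(X)$ bound (indeed for fractal $X$ it can be comparable to $\Ncov(X)^2$). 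Invoking "$\Ncov(v_0\cdot X)\leq\delta^{-1}$" does not help: $\delta^{-1}$ is nowhere near $\delta^{-O(\epsilon)}$. Second, $\phi(X)+\phi(X)A_0=\{x_1v_0^T + x_2(a^Tv_0)^T : x_1,x_2\in X,\,a\in A_0\}$; for any $a$ with $a^Tv_0$ not parallel to $v_0$ (such $a$ must exist since $A_0$ acts irreducibly), the map $(x_1,x_2)\mapsto x_1v_0^T+x_2(a^Tv_0)^T$ is injective, so already this single slice has covering number $\approx\Ncov(X)^2 \approx\delta^{-2\sigma}$. Since $\Ncov(\tilde A)\approx\Ncov(X)\approx\delta^{-\sigma}$, this term alone witnesses $\Ncov(\tilde A + \tilde A\cdot\tilde A)\geq\delta^{-\epsilon'}\Ncov(\tilde A)$ for free, and your decomposition cannot bound it away. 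In short, the product structure of $\tilde A$ mixes $\phi(X)$ with itself and with $A_0$ in ways that manufacture growth independent of $X+X$ and $X+aX$, so the argument is vacuous.

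You also cannot salvage this by a clever choice of $v_0$: to kill the first term you would need $v_0\cdot X$ confined to scale $\delta^{O(\epsilon)}$, which requires $X$ to essentially lie in a hyperplane, and to kill the second you would need $A_0^Tv_0\subset\R v_0$, i.e.\ $v_0^\perp$ to be $A_0$-invariant, contradicting irreducibility.

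For context, the paper does not reduce Theorem~\ref{thm:ActionRn} to Theorem~\ref{thm:SumProduct2} at all. Instead it works directly with Theorem~\ref{thm:SumProduct}. It introduces the "approximate stabilizer" $\Sset(X;\delta^{-\epsilon})$ of endomorphisms $a$ with $\Ncov(X+aX)\leq\delta^{-\epsilon}\Ncov(X)$, shows (Lemma~\ref{lm:setSA}, via Ruzsa calculus) that this set is closed under approximate sums, products and inverses; uses Proposition~\ref{pr:gAgE} and the effective Wedderburn theorem (Proposition~\ref{pr:Wedder}) to identify a model subalgebra $E$ that $A$ almost generates at some intermediate scale $\delta_1$; applies Theorem~\ref{thm:SumProduct} to fill a fat ball $E\cap\Ball(0,\delta_1^{\epsilon_0})$ with $\sg{A}_s$; deduces via Lemma~\ref{lm:setSA} that all of $E\cap\Ball(0,1)$ lies in $\Sset[\delta_1](gX;\cdot)$; and finally uses the probabilistic estimate of Proposition~\ref{pr:spread} to force $X$ to be full-dimensional at scale $\delta_1$, which propagates down to scale $\delta$ by multiplication with a contraction in $\sg{A}_s$, giving the contradiction. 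The key structural insight your proposal misses is that $X$ should not be embedded into the matrix algebra at all; rather, $\End(\R^n)$ should be made to grow until it contains a whole ball of operators, which then shear $X$ in enough independent directions.

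Separately, a smaller point: when you pass to $A_0\subset A$ with $\Ncov(A_0)\leq\delta^{-C\epsilon}$, you cannot simultaneously keep the Frostman-type non-concentration $\Ncov[\rho](A_0)\geq\delta^\epsilon\rho^{-\kappa}$ at all scales, since any set of covering number $\delta^{-O(\epsilon)}$ violates this for $\rho$ near $\delta$. You mention combining witnesses with a "Frostman-like subset", but then $\Ncov(A_0)$ is not $\delta^{-O(\epsilon)}$ after all, and the later step that estimates the residual sumset contributions by powers of $\Ncov(A_0)$ no longer gives a $\delta^{-O(\epsilon)}$ factor. This is secondary to the main gap but should be noted.
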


This improves a previous result of Bourgain and Gamburd~\cite[Proposition 1]{BourgainGamburd_SU} where a constant is required instead of $\delta^\epsilon$ in the irreducibility condition \ref{it:irreducible}. The proof of Bourgain and Gamburd seems to rely on this irreducibility hypothesis at all scales in a crucial way. Relaxing this hypothesis is the most important technical challenge in the proof of Theorem~\ref{thm:ActionRn}. A reason for which this improvement is important is that this kind of estimates are often used together with the Balog-Szemerédi-Gowers theorem, which requires restricting the sets we work with to subsets of size $\delta^\epsilon$ times the original size. This usually destroys all information above the scale $\delta^\epsilon$.

As a simple corollary, we can obtain a "sum-bracket" estimate in simple Lie algebras. If $A$ is a subset of a Lie algebra $\gf$, write $[A,A] = \ens{ [a,b] \mid a,b \in A}$.
\begin{coro}\label{cr:SumBracket}
Let $\gf$ be a normed\footnote{We mean a norm which makes the underlying linear structure a normed vector space.} simple Lie algebra of finite dimension. Given $\kappa > 0$ and $\sigma < \dim(\gf)$, there is $\epsilon > 0$ such that the following holds for $\delta > 0$ sufficiently small. Let $A$ be a subset of $\gf$, assume that
\begin{enumerate}
\item $A \subset \Ball(0,\delta^{-\epsilon})$,
\item $\forall \rho \geq \delta$, $\Ncov[\rho](A) \geq \delta^\epsilon \rho^{-\kappa}$,
\item $\Ncov(A) \leq \delta^{-\sigma - \epsilon}$,
\item for every proper Lie subalgebra $W$ of $\gf$, there is $a \in A$ such that $d(a,W) \geq \delta^\epsilon$.
\end{enumerate}
Then, 
\[\Ncov(A + A) + \Ncov(A + [A,A]) \geq \delta^{-\epsilon} \Ncov(A).\]
\end{coro}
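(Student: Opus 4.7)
The plan is to derive the corollary from Theorem \ref{thm:ActionRn} by using the adjoint representation $\ad \colon \gf \to \End(\gf)$ to turn the Lie bracket into a linear action. Identify $\gf \cong \R^n$ with $n = \dim \gf$, take $X = A$, and let the acting family of endomorphisms be $\tilde A := \ad(A) \subset \End(\gf)$. Since $\gf$ is simple, $\ad$ is injective and hence a bi-Lipschitz linear isomorphism onto $\ad(\gf)$, which transfers the ball-containment and non-concentration hypotheses on $A$ to $\tilde A$ up to constants that can be absorbed into a smaller choice of $\epsilon$. Granting the remaining hypothesis and observing that $\ad(b)\cdot X = [b, A] \subset [A, A]$ for every $b \in A$, Theorem \ref{thm:ActionRn} yields $\Ncov(A + A) + \max_{b \in A} \Ncov(A + [b, A]) \geq \delta^{-\epsilon} \Ncov(A)$; bounding the maximum above by $\Ncov(A + [A, A])$ gives exactly the desired conclusion.

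The only nontrivial hypothesis to verify is the irreducibility condition \ref{it:irreducible} for $\tilde A$: for every proper nonzero subspace $W \subset \gf$, one must find $b \in A$ and $w \in W \cap \Ball(0, 1)$ with $d([b, w], W) \geq \delta^{\epsilon'}$. Given such a $W$, consider the normalizer $N(W) = \ens{x \in \gf : [x, W] \subset W}$; this is always a Lie subalgebra, and because $\gf$ is simple and $W$ is neither $\ens{0}$ nor $\gf$, $W$ is not an ideal, so $N(W)$ is a \emph{proper} Lie subalgebra. The hypothesis on $A$ then supplies $b \in A$ with $d(b, N(W)) \geq \delta^\epsilon$. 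The quantity to lower bound is $\norm{\Phi_W(b)}_{\mathrm{op}} = \max_{w \in W,\, \norm{w} = 1} d([b, w], W)$, where $\Phi_W \colon \gf \to \mathrm{Hom}(W, \gf/W)$ is the linear map $\Phi_W(x)(w) = [x, w] \bmod W$, whose kernel is exactly $N(W)$.

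The main obstacle is that the smallest nonzero singular value $\sigma_W$ of $\Phi_W$ is not uniformly bounded below in $W$: it degenerates whenever $W$ approaches a subspace with strictly larger normalizer. To handle this I would stratify the Grassmannian of proper nonzero subspaces of $\gf$ by the rank of $\Phi_W$. The maximally degenerate stratum is closed, hence compact, so on it $\sigma_W$ is continuous and positive and is bounded below by a uniform constant $c_0 > 0$ depending only on $\gf$; for $W$ in this stratum the hypothesis applied to $N(W)$ directly gives $\norm{\Phi_W(b)}_{\mathrm{op}} \geq c_0 \delta^\epsilon$. For $W$ outside the closed stratum, one takes a nearest $W^\ast$ in a more degenerate stratum and applies the hypothesis to the larger Lie subalgebra $N(W^\ast)$ to extract $b \in A$ with $d(b, N(W^\ast)) \geq \delta^\epsilon$; the estimate $\norm{\Phi_{W^\ast}(b)}_{\mathrm{op}} \geq c_0 \delta^\epsilon$ then transfers to $\norm{\Phi_W(b)}_{\mathrm{op}}$ by continuity of $W \mapsto \Phi_W$ as long as $W$ is within $\delta^{2\epsilon}$ of $W^\ast$ in the Grassmannian (the factor $\delta^{2\epsilon}$ comes from balancing against $\norm{b} \leq \delta^{-\epsilon}$). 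When $W$ is further from $W^\ast$ than that, a semi-algebraic (\L{}ojasiewicz-type) control of $\sigma_W$ in terms of the distance to the more degenerate stratum gives a polynomial lower bound on $\norm{\Phi_W(b)}_{\mathrm{op}}$ directly. Iterating across the finitely many strata, at the cost of a polynomial factor in $\epsilon$, yields the required uniform estimate $\norm{\Phi_W(b)}_{\mathrm{op}} \geq c\,\delta^{O(\epsilon)}$, after which Theorem \ref{thm:ActionRn} applies and produces the sum-bracket bound.
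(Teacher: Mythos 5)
Your reduction of the corollary to Theorem~\ref{thm:ActionRn} via the adjoint representation is exactly the paper's: take $X = A$, let the acting family be $\ad(A)$, use injectivity of $\ad$ (simplicity of $\gf$) to transfer the non-concentration, and observe $\ad(b)A = [b,A] \subset [A,A]$ so that $\max_b\Ncov(A+[b,A]) \leq \Ncov(A+[A,A])$. The entire weight of the corollary thus sits on verifying that $\ad(A)$ acts $\delta^{O(\epsilon)}$-irreducibly, and here your argument diverges from the paper's and has a genuine gap. The paper proves this as a self-contained lemma by a single \L{}ojasiewicz application: after reducing (via the Lie-subalgebra analogue of Lemma~\ref{lm:FiniteA}) to $A = \ens{a_1,\dots,a_n}$ finite with $n = \dim\gf$, it considers the real-analytic function
\[f(W;x_1,\dots,x_n) = \sum_{i=1}^n\int_{W\cap\Ball(0,1)} d(\ad(x_i)w,W)^2\,\dd w\]
on $\Gr(k,\gf)\times\gf^n$, whose zero locus is precisely the set where every $x_i$ lies in the normalizer $N(W)$. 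If irreducibility failed, $f(W_0;a_1,\dots,a_n)\ll\rho^C$ for some $W_0$, and \L{}ojasiewicz produces $(W_1;b_1,\dots,b_n)$ within distance $\rho$ at which $f=0$; since $\gf$ is simple, $N(W_1)$ is a proper Lie subalgebra, contradicting that $A$ is $\rho$-away from Lie subalgebras. No singular-value analysis or stratification is needed.

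Your stratification argument is plausible in spirit but not a proof as written. The key quantity $\sigma_W$ (the $\mathrm{rank}(\Phi_W)$-th singular value of $\Phi_W$) is neither continuous nor semianalytic as a function of $W$ across rank strata — it jumps upward whenever the rank drops — so a \L{}ojasiewicz-type lower bound on $\sigma_W$ in terms of distance to a degenerate stratum cannot be extracted directly. One would have to pass to a polynomial proxy, for instance the sum of squared $r\times r$ minors of $\Phi_W$ for each fixed rank $r$ (which vanishes exactly on the rank $<r$ locus and dominates the $r$-th singular value squared up to a uniform factor), apply \L{}ojasiewicz to that, and then reassemble. Even granting this, the "transfer by continuity" step near a stratum boundary has a quantitative defect: with $d(W,W^\ast) \leq \delta^{2\epsilon}$ and $\norm{b}\leq \delta^{-\epsilon}$, you get $\norm{\Phi_W(b) - \Phi_{W^\ast}(b)} \leq L\delta^\epsilon$ for a Lipschitz constant $L$ of $W\mapsto\Phi_W$, and this only leaves something positive if $L < c_0$, which is not automatic — the splitting threshold must be tuned to $c_0/L$, not set a priori. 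Finally, the recursion over $O(n^2)$ strata compounds polynomial losses that your sketch leaves untracked. All of these can probably be repaired, but the paper's single \L{}ojasiewicz application to $f$, whose zero set directly encodes "$x_1,\dots,x_n$ all normalize $W$", sidesteps every one of these issues and is considerably shorter; I'd recommend it.
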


\subsection{Motivation}
Theorem~\ref{thm:ActionRn} is an interesting result on its own right, and we hope it leads to fruitful applications, just like the one dimensional case did. In particular, our work is primarily motivated by the following specific application. In~\cite{BenoistQuint}, Benoist and Quint have generalized the stiffness result of Bourgain-Furman-Lindenstrauss-Mozes~\cite{BFLM} to a much broader class of dynamical systems. In particular, for linear actions on tori, they do not need the proximality assumption in~\cite{BFLM}. However, the results in~\cite{BenoistQuint} are not quantitative. 

The approach in~\cite{BFLM} is Fourier-analytic. While a subgroup $\Gamma \subset \SL_d(\Z)$ acts on the torus, its transpose $\transp{\Gamma}$ acts on Fourier coefficients. A large part of the proof in~\cite{BFLM} focuses on the study of large Fourier coefficients under this action. By the theory of random matrix products, if $\Gamma$ is proximal, then large random products in $\Gamma$ behave like rank one projections composed with rotations, if viewed at an appropriate scale. That is how Bourgain's discretized projection theorem comes into play. It is clear that if one wants to avoid the proximality assumption in the Bourgain-Furman-Lindenstrauss-Mozes theorem, a higher rank discretized projection theorem is required. And as the rank one projection theorem follows from the sum-product theorem in $\R$, a higher rank projection theorem can be proved using Theorem~\ref{thm:ActionRn}. This is the subject of a subsequent paper~\cite{He_proj}.

\subsection{Outline of the proofs}
Both Theorem~\ref{thm:SumProduct2} and Theorem~\ref{thm:ActionRn} are deduced from the following theorem. 

\begin{thm}\label{thm:SumProduct}
Let $E$ be a normed simple real algebra of finite dimension. Given $\kappa > 0$ and $\epsilon_0 > 0$, there is $\epsilon > 0$ and an integer $s \geq 1$ such that the following holds for $\delta > 0$ sufficiently small. Let $A$ be a subset of $E$, assume that
\begin{enumerate}
\item $A \subset \Ball(0,\delta^{-\epsilon})$,
\item $\forall \rho \geq \delta$, $\Ncov[\rho](A) \geq \delta^\epsilon \rho^{-\kappa}$,
\item $A$ is $\delta^\epsilon$-away from subalgebras.
\end{enumerate}
Then, 
\[\Ball(0,\delta^{\epsilon_0}) \subset \sg{A}_{s} + \Ball(0,\delta).\]
\end{thm}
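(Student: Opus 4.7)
The plan is a bootstrapping argument: starting from $A$, I would repeatedly apply the sum and product operations in $E$, watching the covering number $\Ncov(\sg{A}_s)$ grow, until it reaches essentially that of a ball, then convert this density into the desired ball coverage. Concretely, I would establish a ``gain lemma'': there exist $\epsilon_1 > 0$ and $s_0 \in \N$ (depending only on $\kappa$, $\epsilon_0$, $\dim(E)$) such that, under the hypotheses of the theorem, whenever $\Ncov(A) \leq \delta^{-\dim(E)+\epsilon_0/2}$, one has $\Ncov(\sg{A}_{s_0}) \geq \delta^{-\epsilon_1}\Ncov(A)$. Since the three standing hypotheses propagate from $A$ to any superset $\sg{A}_{s_0} \supseteq A$ (up to worsening $\epsilon$ by a bounded factor), iterating the lemma $O(\dim(E)/\epsilon_1)$ times forces the auxiliary upper bound to fail, producing $\Ncov(\sg{A}_s) \geq \delta^{-\dim(E)+\epsilon_0/2}$ for some $s = s_0^{O(1/\epsilon_1)}$.

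\textbf{The gain lemma: reduce to the one-dimensional case.} This is the heart of the argument. The strategy is to reduce to Bourgain's one-dimensional Theorem~\ref{thm:Bourgain} via projections, then transfer growth back through the algebra structure. First, by pigeonhole on condition (ii), I would extract a ``regular'' scale $\rho_0 \geq \delta$ at which $A$ realizes a local dimension $\alpha \in [\kappa, \dim(E)]$. Second, I would choose a linear functional $\pi : E \to \R$ along which, after passing to a large subset of $A$, the projection $\pi(A)$ inherits the non-concentration and boundedness hypotheses of Theorem~\ref{thm:Bourgain}; the existence of such $\pi$ follows from the full-dimensional non-concentration (ii). Theorem~\ref{thm:Bourgain} then yields growth of $\pi(A)$ by a factor of $\delta^{-\epsilon}$ under sum and product. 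Third --- the algebra step --- I would lift this scalar growth back to $E$: the away-from-subalgebras condition, combined with simplicity of $E$ via a Burnside / double-centralizer argument, guarantees elements of $A$ whose left-multiplication rotates the chosen one-dimensional direction into $\dim(E)$ linearly independent directions, so that growth of $\pi(\sg{A}_s)$ translates into growth of $\sg{A}_{s'}$ in all of $E$ for some slightly larger $s'$.

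\textbf{Large covering number implies ball coverage.} Once $\Ncov(\sg{A}_s) \geq \delta^{-\dim(E)+\epsilon_0/2}$ with $\sg{A}_s \subseteq \Ball(0, \delta^{-O(\epsilon)})$, the $\delta$-neighborhood of $\sg{A}_s$ occupies a fraction at least $\delta^{\epsilon_0/2 - O(\epsilon)}$ of the ambient ball. A discretized Steinhaus argument, realized by convolving the indicator of this neighborhood with itself, then shows that $\sg{A}_{s} + (-\sg{A}_{s}) \subseteq \sg{A}_{2s}$ does $\delta$-cover $\Ball(0, \delta^{\epsilon_0})$, finishing the proof.

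\textbf{Main obstacle.} The hardest step will be the algebra lifting inside the gain lemma. The away-from-subalgebras condition is only quantitative at the single scale $\delta^\epsilon$, but to run Bourgain's theorem and transfer growth one needs multi-scale control --- effectively, the statement that products of $O(1)$ elements of $A$ span $E$ quantitatively non-degenerately at every scale from $\delta$ to $1$. Establishing this seems to require a delicate blend of Burnside's theorem, an induction on chains of subalgebras, and careful scale-coordination between $\delta$, $\delta^\epsilon$, $\delta^{\epsilon_0}$ and the intermediate scale $\rho_0$. A secondary difficulty is that Theorem~\ref{thm:Bourgain} demands uniform non-concentration of $\pi(A)$ at every scale from $\delta$ up to $1$, whereas a generic projection only inherits non-concentration above $\rho_0$; a rescaling argument, replacing $\delta$ by a power of itself and $A$ by a well-chosen subset, will be needed to bridge this gap.
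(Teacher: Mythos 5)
Your plan (iterate a ``gain lemma'' until the covering number is nearly full-dimensional, then use a Steinhaus-type argument) is a genuinely different route from the paper's, which instead produces a small \emph{segment} in $\sg{A}_s$ via Bourgain--Gamburd's sum-product theorem for $\C^n$ (Theorem~\ref{thm:etaDelta}), inflates it to a small ball using simplicity of $E$, and then iterates across scales from $\delta$ to $\delta^{\epsilon_0}$. The author explicitly remarks, after the proof of Theorem~\ref{thm:SumProduct2}, that your direction (growth then Fourier) is in principle possible, so the overall shape is not unreasonable. However, the two central steps of your proposal have genuine gaps.

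First and most seriously, the reduction to Bourgain's one-dimensional Theorem~\ref{thm:Bourgain} via a linear functional $\pi \colon E \to \R$ is broken. A linear $\pi$ respects addition, so $\pi(A) + \pi(A) = \pi(A+A)$, but it does not respect multiplication: there is no relation between $\pi(A)\cdot \pi(A)$ and $\pi(A\cdot A)$ or $\pi(\sg{A}_s)$. So Theorem~\ref{thm:Bourgain}, applied to $\pi(A)\subset \R$, tells you something about the abstract sum-product set of the real numbers $\pi(a)$, which is disconnected from $\sg{A}_s$ in $E$. Your ``algebra step'' (Burnside / double-centralizer to rotate a 1D direction into all of $E$) addresses how to spread out additively once you have growth in one direction, but it does not fix the fact that multiplicative growth on the line never got produced inside $\sg{A}_s$ in the first place. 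The paper gets around exactly this obstruction by a completely different device: escape from subvarieties (Lemma~\ref{lm:escapeSV}) produces $a \in \sg{A}_s$ with well-separated eigenvalues; the commutator map $\phi(x) = ax - xa$ is then used in a dichotomy --- either $\phi(A)$ is large (in which case a trace-set argument, Lemma~\ref{lm:traceSet}, gives genuine growth of $\sg{A}_s$ and you iterate), or $\phi(A)$ is small, in which case a large subset of $A-A$ lives near the diagonal matrices (a commutative subring), where Bourgain--Gamburd's $\C^n$ theorem applies because there the multiplication \emph{is} coordinate-wise. You need some substitute for this mechanism; a bare linear projection will not do.

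Second, the closing Steinhaus step is also a gap. After iterating you have $\Ncov(\sg{A}_s) \geq \delta^{-\dim(E)+\epsilon_0/2}$ inside $\Ball(0,\delta^{-O(\epsilon)})$, i.e.\ the $\delta$-neighborhood has Lebesgue density roughly $\delta^{\epsilon_0/2}$ in the ambient ball. That is a vanishing fraction, not a positive one; the convolution $\indic \ast \indic$ is positive on average, but a density-$\delta^{\epsilon_0/2}$ set can easily be structured (e.g.\ concentrated near parallel slabs) so that the convolution vanishes on much of $\Ball(0,\delta^{\epsilon_0})$. The paper's remark cites \cite[Theorem~6]{Bourgain2010} precisely because passing from ``nearly full covering number'' to ``contains a ball'' requires a genuinely Fourier-analytic argument with further uses of sum and product, not just convolution once. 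As written, your final step asserts the conclusion rather than deriving it.
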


Note that each of the conditions (ii) and (iii) rules out one obvious obstruction for $\sg{A}_s$ to grow. Indeed, firstly, if $A$ is covered by a bounded number of balls of radius $\rho$ with $\rho < \delta^{\epsilon_0}$, then $\sg{A}_s$ is covered by $O_s(1)$ balls of radius $\rho$. Secondly, if $A$ is contained in the unit ball\footnote{In this example, the norm on $E$ is submultiplicative, i.e. $\forall x,y \in E,\; \norm{xy}\leq \norm{x}\norm{y}$. This assumption is not restrictive since every norm on $E$ is equivalent to a submultiplicative one.} and in the $\rho$-neighborhood of a proper subalgebra with $\rho < \delta^{\epsilon_0}$, then $\sg{A}_s$ is contained in the $O_s(\rho)$-neighborhood of the same proper subalgebra.

The main ingredient in the proof of Theorem~\ref{thm:SumProduct} is a sum-product theorem~\cite[Corollary 8]{BourgainGamburd_SU} due to Bourgain-Gamburd concerning the ring $\C^n$, the $n$-fold direct product of $\C$ with itself. Let $n$ be a positive integer. We denote by $\Delta$ the set of diagonal matrices in $\Mat_n(\C)$.
\begin{thm}[Bourgain-Gamburd~\cite{BourgainGamburd_SU}]\label{thm:etaDelta}
Given $\kappa > 0$ and $n$ a positive integer, there is a positive integer $s \geq 1$ such that, for $\delta > 0$ sufficiently small, the following holds. Let $A$ be a subset of $\Mat_n(\C)$. Assume that
\begin{enumerate}
\item $A \subset \Ball(0,1)$,
\item $\Ncov(A) \geq \delta^{-\kappa}$,
\item $A \subset \Delta + \Ball(0,\delta)$.
\end{enumerate}
Then there is $\eta \in \Delta$ with $\norm{\eta} = 1$ such that
\begin{equation*}
[0,\delta^\alpha] \eta \subset \sg{A}_s + \Ball(0,\delta^{\alpha + \beta}),
\end{equation*}
with some $0 \leq \alpha < C(n,\kappa)$ and some $\beta > c(n,\kappa) > 0$. 
\end{thm}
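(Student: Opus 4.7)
The plan is to exploit the isomorphism $\Delta \cong \C^n$ (equipped with coordinate-wise multiplication) to reduce the problem to Bourgain's discretized sum-product theorem in $\C$, applied in a single well-chosen coordinate, and then to lift the resulting segment to the full algebra $\Delta$. After discarding the $\delta$-scale off-diagonal parts, $A$ can be viewed as a subset of $\Delta$. Writing $\pi_j : \Delta \to \C$ for the projection onto the $j$-th diagonal entry, coordinate-wise multiplication gives $\pi_j(\sg{A}_s) = \sg{\pi_j(A)}_s$ for every $j,s$. From $A \subset \prod_j \pi_j(A)$ combined with $\Ncov(A) \geq \delta^{-\kappa}$, pigeonholing yields an index $i$ such that $\Ncov(\pi_i(A)) \geq \delta^{-\kappa/n}/O(1)$.

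I would then invoke the $\C$-analogue of Theorem~\ref{thm:Bourgain} (which follows from the real case applied to real and imaginary parts) on $\pi_i(A)$. Because the non-concentration hypothesis (ii) of Theorem~\ref{thm:Bourgain} is not part of Theorem~\ref{thm:etaDelta}, a preliminary multi-scale analysis first selects a scale $\rho \in [\delta, 1]$ where a suitable non-concentration bound holds after restricting and rescaling, so that Bourgain's theorem can be applied to a rescaled subset. Combining the resulting growth with the standard fact that covering-number saturation in $\C$ forces the emergence of a genuine interval yields a segment $[0,\delta^{\alpha_1}]\omega \subset \sg{\pi_i(A)}_{s_1} + \Ball(0,\delta^{\alpha_1+\beta_1})$ for some $\omega \in \C$ with $\abs{\omega}=1$.

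The main obstacle, and the heart of the argument, is lifting this one-dimensional segment to a segment in $\Delta$ along $\eta := \omega e_i$ (so $\norm{\eta}=1$), which amounts to forcing all coordinates $j \neq i$ to lie in $\Ball(0,\delta^{\alpha+\beta})$. The plan is to build, for each $j \neq i$, a ``coordinate eliminator'' $\xi_j \in \sg{A}_{O(1)}$ satisfying $\abs{\pi_j(\xi_j)} \leq \delta^{\alpha'}$ yet $\abs{\pi_i(\xi_j)} \geq \delta^{\epsilon''}$, and then set $\xi := \prod_{j\neq i} \xi_j$, which by coordinate-wise multiplication has small $j$-components for all $j \neq i$ and a not-too-small $i$-component. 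Such a $\xi_j$ generically exists by pigeonholing a $\delta^{\alpha'}$-fiber of $\pi_j$ on which $\pi_i$ still varies; but if $\pi_i$ and $\pi_j$ are highly correlated on $A$, i.e.\ $A$ concentrates near the subalgebra $\{(z_1,\ldots,z_n) : z_i = z_j\}\subset \C^n$, this direct construction fails and one must project $A$ onto this subalgebra, identify it with $\C^{n-1}$, and induct on $n$. Once $\xi$ is in hand, multiplying a lift of the segment from step two by the normalized $\xi$ yields the required segment along $\eta$ in $\sg{A}_{s_1+O_n(1)}$. Threading the parameters $\kappa$, $\alpha_1$, $\alpha'$, $\beta_1$, $\epsilon''$ through this inductive reduction is the source of the dependence of $s$, $C(n,\kappa)$, $c(n,\kappa)$ on $n$.
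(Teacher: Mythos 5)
Theorem~\ref{thm:etaDelta} is a \emph{cited} result in this paper (Bourgain--Gamburd, \cite[Corollary 8]{BourgainGamburd_SU}); the paper offers no proof, only a modest strengthening in Corollary~\ref{coro:etaDelta}. So there is no in-paper argument to compare your sketch against; I can only assess it on its own terms.

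Your template---project to a single diagonal coordinate $\pi_i$ with large covering number, apply the one-dimensional discretized sum-product theorem over $\C$, then lift the resulting segment into $\Delta$---is plausible, and you correctly flag the two main obstacles: the missing non-concentration hypothesis and the need to tame the coordinates $j \neq i$. But the ``coordinate eliminator'' step, the core of your lift, has a genuine gap. First, a parameter mismatch: for the suppressed coordinates to be absorbed into the error ball $\Ball(0,\delta^{\alpha_1+\beta_1})$ you need $\alpha'$ at least comparable to $\alpha_1+\beta_1$, and $\alpha_1$ may be as large as $C(\kappa)$, far exceeding $\kappa$. Yet a crude pigeonhole over $\delta^{\alpha'}$-fibers of $\pi_j$ produces a non-singleton fiber only when $\alpha'$ is small compared to $\kappa$; nothing in the sketch reconciles these two competing constraints. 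Second, and more fundamentally, the failure mode of the eliminator is mischaracterized. If no suitable $\xi_j$ exists, what that says is that $\pi_j\vert_A$ multi-scale determines $\pi_i\vert_A$, i.e.\ the projection $(\pi_i,\pi_j)(A)$ sits near the graph of some Lipschitz map. That graph coincides with the subalgebra $\{z_i=z_j\}$ only for the identity map; a generic linear graph $\{z_i = c\,z_j\}$ with $c \notin \{0,1\}$, let alone a nonlinear one, is \emph{not} a subalgebra, so the proposed reduction ``project onto the subalgebra $\{z_i=z_j\}$ and induct on $n$'' does not treat this case at all. A correct argument must either exploit the multiplicative structure to escape such graphs (they are generically not multiplicatively closed---this is the right lever, but it requires its own quantitative escape lemma in the spirit of Section~\ref{sc:escape}) or organize the induction around the correct degenerate condition rather than around subalgebras. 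As written, the heart of the lifting argument is missing.
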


Let us sketch the proof of Theorem~\ref{thm:SumProduct}. In the following paragraphs, each $s$ stands for some unspecified integer that can be bounded in terms of $E$ and $\kappa$. In order to use the Bourgain-Gamburd theorem above, we need first to embed the algebra $E$ in $\Mat_n(\C)$ and then produce a lot of nearly simultaneously diagonalizable elements in $\sg{A}_s$. The standard way (since the work of Helfgott~\cite{Helfgott08}) to produce such elements is to use the fact that the centralizer of a matrix with $n$ distinct eigenvalues is simultaneously diagonalizable. Since the set of matrices with at least one multiple eigenvalue is an algebraic subvariety of $\Mat_n(\C)$, to find an element $a \in \sg{A}_s$ with $n$ distinct eigenvalues we use the technique of "escape from subvarieties", first developed in~\cite{EskinMozesOh}. For our discretized setting, a quantitative version of this technique is required since distance matters. For Lie groups, this is established in~\cite{Saxce}. Here we adapt the argument in the sum-product setting.

Once we have such an element $a$, we consider the map $\phi \colon x \mapsto ax - xa$. We distinguish two cases. 
\begin{enumerate}[label=(\alph*)]
\item If $\phi(A)$ is large ($\Ncov(\phi(A)) \geq \delta^{\kappa'}\Ncov(A)$ with $\kappa' = \frac{\kappa}{3\dim(E)}$), then we will prove $\Ncov(\sg{A}_s) \geq \delta^{-\kappa'}\Ncov(A)$ in this case. We remark that all element in $\phi(A)$ have zero trace. Hence if $B$ is a set of matrices with a lot of different traces, then $\phi(A) + B$ contains a lot of disjoint translates of $\phi(A)$. In particular, $\Ncov(\phi(A) + B) \gg \Ncov(\phi(A))\Ncov(\tr(B))$. Thus, it suffices to establish a lower bound on the size of the set of traces of $\sg{A}_s$. Indeed, we can prove $\tr(\sg{A}_s) \geq \delta^{-2\kappa'}$ using the fact that the bilinear form $(x,y)\mapsto \tr(xy)$ is non-degenerate.
\item Otherwise the set $A$ must have a large intersection with a fiber of $\phi$, i.e. there is $y \in \Mat_n(\C)$ such that $A \cap \inv{\phi}(\Ball(y,\delta)) \geq \delta^{-\kappa'}$. The difference set of the above intersection consists of nearly simultaneously diagonalizable matrices. And we can apply Theorem~\ref{thm:etaDelta} to get a small segment at a smaller scale (the segment is inside the $\delta^{\alpha+\beta}$-neighborhood of $\sg{A}_s$).
\end{enumerate}
What we do is to repeat the same argument to $\sg{A}_s$ if case (a) happens. After a bounded number of times, case (a) won't be possible because $\sg{A}_s \subset \Ball(0,O_s(\delta^{-O_s(\epsilon)}))$. Hence eventually, case (b) is true, i.e. inside $\sg{A}_s$, there is a segment of direction $\xi$ and length $\delta^{\alpha}$ at scale $\delta^{\alpha + \beta}$. Then, using the fact that the two-sided ideal generated by $\xi$ is the whole algebra $E$, we can prove that the small segment will grow into a small ball under left and right multiplication by elements of $A$. 

This almost finishes the proof. The only problem is that the ball obtained is not large enough and it is at a scale other than $\delta$. As in~\cite{Saxce}, this issue can be solved by applying the above argument at various scales ranging from $\delta^{\Inv{\alpha+\beta}}$ to $\delta^{\frac{\epsilon_0}{\alpha}}$.

That is how the proof of Theorem~\ref{thm:SumProduct} goes. To deduce Theorem~\ref{thm:SumProduct2} from it, we argue by contradiction and use the fact that if both $A + A$ and $A + A\cdot A$ are small (i.e. \eqref{eq:SumProduct2} fails), then for every $s$, $\sg{A}_s$ is small, and thus cannot grow into a large ball as Theorem~\ref{thm:SumProduct} asserts.

To prove Theorem~\ref{thm:ActionRn}, a little more work is needed. First, in the special case where the collection of endomorphisms is so large that for every $x \in X$, $Ax = \ens{ax \mid a \in A}$ contains a ball of radius $\norm{x}$, a Fubini-type argument yields \eqref{eq:ActionRn}. Then, using additive combinatorics, we can show that if \eqref{eq:ActionRn} fails, then we have an upper bound for $\Ncov(X + aX)$ for every $a \in \sg{A}_s$, $s \geq 1$. Therefore, the idea of the proof is to apply Theorem~\ref{thm:SumProduct} to make $A$ grow into a fat ball in some subalgebra $E \subset \Mat_n(\R)$ so that we can use the special case. Here the subalgebra $E$ can be understood as the subalgebra approximately generated by the set $A$. It inherits the irreducibility property (assumption~\ref{it:irreducible} in Theorem~\ref{thm:ActionRn}) from $A$. In particular, $\R^n$ is an irreducible representation of $E$. Hence, by the Wedderburn structure theorem, $E$ is isomorphic to $\Mat_n(\R)$ or $\Mat_{\frac{n}{2}}(\C)$ or $\Mat_{\frac{n}{4}}(\HH)$. Here, a technical issue appears : in Theorem~\ref{thm:SumProduct}, the result depends on the norm on $E$. In the present situation, the norm on $E$ is induced from that on $\Mat_n(\R)$. To have a control on it, we need a quantitative version of the Wedderburn theorem. Indeed, we show that under the quantitative irreducibility condition~\ref{it:irreducible} of Theorem~\ref{thm:ActionRn}, the normed algebra $E$ is isomorphic to one of the three matrix algebras endowed with standard operator norm via a bi-Lipschitz map with the Lipschitz constant controlled independently of $A$.

\subsection{Organization of the paper}

In Section~\ref{sc:prelim} we introduce some definitions and notations and then recall some useful tools from additive combinatorics and the theory of semianalytic sets. Sections~\ref{sc:escape}--\ref{sc:Wedderburn} prepare for the proof of the main results. More precisely, Section~\ref{sc:escape} is dedicated to the "escape from subvariety" technique. Section~\ref{sc:trace} deals with a lower bound on the size the set of traces. And in Section~\ref{sc:Wedderburn} we establish an effective version of the Wedderburn structure theorem. We complete the proof of Theorem~\ref{thm:SumProduct} and deduce Theorem~\ref{thm:SumProduct2} in Section~\ref{sc:SumProduct}. Finally Theorem~\ref{thm:ActionRn} is proved in Section~\ref{sc:ActionRn} and Corollary~\ref{cr:SumBracket} is deduced in Section~\ref{sc:Lie}.

\subsection*{Acknowledgements}
This work is part of my Ph.D. thesis. I would like to thank my advisors Emmanuel Breuillard and Péter Varj\'u for guiding me during my research. I am also grateful to Nicolas de Saxcé for very helpful discussions. I thank all of them as well as the anonymous referee for their invaluable advice which improved greatly the presentation of this paper.

\section{Preliminaries}\label{sc:prelim}
We first set up notation and terminology and then recall some tools that we shall need such as the Ruzsa calculus and the \L{}ojasiewicz inequality.

\subsection{Notations and definitions} 
Throughout this paper, $n$ denotes a positive integer. We use Landau notations $f = O(g)$ and Vinogradov notations $f \ll g$. Most of our estimates are about objects in some ambient space (a normed vector space or a normed algebra) and we write $f \ll_V g$ and $f = O_V(g)$ to indicate that the implied constant depends not only on the dimension of $V$ but also on the norm of $V$. And we omit the subscript when it depends only on the dimension $n$.

We endow the space $\R^n$ with its usual Euclidean norm $\norm{\bullet}$ and $\C^n$ and $\HH^n$ with their respective $l^2$-norm. All algebras are over $\R$ and unitary. In an algebra $E$, $1_E$ denotes the multiplicative identity. All subalgebras of $E$ contain $1_E$. When $K$ is a division algebra over $\R$, denote by $\Mat_n(K)$ the algebra of $n$ by $n$ matrices with coefficients in $K$. For a real linear space $V$, denote by $\End(V)$ the algebra of real endomorphisms of $V$. We identify $\End(\R^n)$ with $\Mat_n(\R)$ in the usual way. For $0 \leq k \leq \dim(V)$, denote by $\Gr(k,V)$ the Grassmannian of $k$-dimensional subspaces in $V$. In any normed vector space, $d(\bullet,\bullet)$ stands for the distance induced by the norm and $\Ball(x,r)$ stands for the closed ball with center $x$ and radius $r$.

Let $\delta > 0$. As mentioned in the introduction, if $A$ is a bounded subset of a normed vector space $V$, we denote by $\Ncov(A)$ its \emph{external covering number} by $\delta$-balls. This number is also known as the \emph{metric entropy} of $A$. We say $A$ is $\delta$-separated if for any $a \in A$, $a$ is the only element in the intersection $\Ball(a,\delta)\cap A$. The following properties are used throughout this paper. If $\tilde A$ is a maximal $\delta$-separated subset of $A$, then
\[\Ncov(A) \leq \abs{\tilde A} \leq \Ncov[\frac{\delta}{2}](A) \ll_V \Ncov(A).\]
Let $W$ be a normed vector space of dimension $n$. Let $0 < \rho \leq 1$ be a parameter. If $\phi \colon V \to W$ is a $\inv{\rho}$-Lipschitz map, then 
\[\Ncov\bigl(\phi(A)\bigr) \ll_W \rho^{-n} \Ncov(A).\]

Since all our spaces are normed, we will need a notion of good bases : those whose vectors are well spaced. When $V$ is $\R^n$ or $\C^n$ endowed with an $l^2$-norm, recall that its norm induces an $l^2$-norm on each of its exterior powers. In this case the best bases are clearly orthonormal ones. Note that a basis $(a_1,\dotsc,a_n)$ is orthonormal if and only if $\forall k, \norm{a_k} \leq 1$ and $\norm{a_1\wedge \dotsb \wedge a_n} \geq 1$. If we loosen this condition, we get a notion of good bases. However, the norm on an exterior power of $V$ is properly defined only when $V$ is equipped with an $l^2$-norm and we will deal with other norms such as the operator norm on $\End(\R^n)$. Thus, we need an equivalent formulation.

\begin{lemm}\label{lm:rhoOrth}
Let $(a_1,\dotsc,a_n)$ be a basis of a normed vector space $V$ over $\R$ or $\C$, then the following conditions are equivalent in the sense that if the i-th condition holds for some $0 < \rho_i \leq 1$ then the j-th condition holds for some $\rho_j \gg_V \rho_i^{O(1)}$.
\begin{enumerate}
\item \label{it:alOrth} For all $k = 1,\dotsc,n$, $\norm{a_k} \leq \inv{\rho_1}$ and $d(a_k, \Span(a_1,\dotsc,a_{k-1})) \geq \rho_1$.
\item \label{it:alOrthx} For all $k = 1,\dotsc,n$, $\norm{a_k} \leq \inv{\rho_2}$ and all $x \in V$, its coordinates $(x_k)_k$ in the basis $(a_k)_k$ satisfy, $\forall k,\; \abs{x_k} \leq \rho_2^{-1}\norm{x}$.
\end{enumerate}
And moreover, if the norm on $V$ is an $l^2$-norm, then they are also equivalent to the following conditions.
\begin{enumerate}
\setcounter{enumi}{2}
\item \label{it:alOrthw} For all $k = 1,\dotsc,n$, $\norm{a_k} \leq \inv{\rho_3}$ and $\norm{a_1\wedge \dotsb \wedge a_n} \geq \rho_3$.
\item \label{it:alOrthl} Any endomorphism that maps an orthonormal basis to $(a_1,\dotsc,a_n)$ is $\rho_4^{-1}$-bi-Lipschitz.
\end{enumerate}
\end{lemm}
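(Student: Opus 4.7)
The plan is to prove the four equivalences with explicit polynomial tracking of the parameters: in each step, if one condition holds with parameter $\rho_i$, I exhibit a parameter $\rho_j \gg_V \rho_i^{O(1)}$ for which the other condition holds. Conditions $(\ref{it:alOrth})$ and $(\ref{it:alOrthx})$ are treated in the general normed setting; $(\ref{it:alOrthw})$ and $(\ref{it:alOrthl})$ enter once the norm is assumed to be an $l^2$-norm.

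For $(\ref{it:alOrth}) \Leftrightarrow (\ref{it:alOrthx})$, the easy direction $(\ref{it:alOrthx}) \Rightarrow (\ref{it:alOrth})$ goes as follows: given any $v \in \Span(a_1,\dotsc,a_{k-1})$, the vector $a_k - v$ has $a_k$-coordinate equal to $1$, so $(\ref{it:alOrthx})$ forces $1 \leq \rho_2^{-1}\norm{a_k - v}$, whence $d(a_k,\Span(a_1,\dotsc,a_{k-1})) \geq \rho_2$. The converse is the heart of the lemma: given $x = \sum_j x_j a_j$, let $k$ be the largest index with $x_k \neq 0$; in the quotient $V/\Span(a_1,\dotsc,a_{k-1})$, the image of $a_k$ has quotient-norm exactly $d(a_k,\Span(a_1,\dotsc,a_{k-1})) \geq \rho_1$, so $\abs{x_k} \leq \rho_1^{-1}\norm{x}$. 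Replacing $x$ by $x - x_k a_k$ enlarges the norm by a factor of at most $1 + \rho_1^{-2}$, and iterating at most $n$ times gives $\abs{x_j} \leq \rho_1^{-O(n)}\norm{x}$ for every $j$, i.e.\ $(\ref{it:alOrthx})$ with $\rho_2 \gg \rho_1^{O_V(1)}$.

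When the norm on $V$ is $l^2$, the equivalence $(\ref{it:alOrth}) \Leftrightarrow (\ref{it:alOrthw})$ follows from the classical identity
\[\norm{a_1 \wedge \dotsb \wedge a_n} = \prod_{k=1}^{n} d\bigl(a_k,\Span(a_1,\dotsc,a_{k-1})\bigr).\]
Each factor on the right is automatically bounded above by $\norm{a_k} \leq \rho_i^{-1}$; pigeonholing over at most $n$ factors then shows that a lower bound on the product is equivalent, up to polynomial loss, to a lower bound on every individual factor. For $(\ref{it:alOrthw}) \Leftrightarrow (\ref{it:alOrthl})$, fix an orthonormal basis and let $L$ denote the endomorphism mapping it to $(a_1,\dotsc,a_n)$. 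By Cauchy--Schwarz applied to its columns, $\normop{L} \leq \sqrt{n}\,\rho_3^{-1}$, and $\abs{\det L} = \norm{a_1 \wedge \dotsb \wedge a_n} \geq \rho_3$. Expressing $L^{-1}$ via its cofactor matrix and bounding each cofactor by Hadamard's inequality yields $\normop{L^{-1}} \ll_n \normop{L}^{n-1}/\abs{\det L}$, a polynomial function of $\rho_3^{-1}$. The reverse $(\ref{it:alOrthl}) \Rightarrow (\ref{it:alOrthw})$ is immediate: if $L$ is $\rho_4^{-1}$-bi-Lipschitz, then $\norm{a_k}=\norm{Le_k}\leq\rho_4^{-1}$ and $\abs{\det L}\geq \rho_4^n$.

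The main obstacle is the peeling induction in $(\ref{it:alOrth}) \Rightarrow (\ref{it:alOrthx})$: the hypothesis $(\ref{it:alOrth})$ only controls ``upper-triangular'' distances from each $a_k$ to the span of its predecessors, while $(\ref{it:alOrthx})$ demands uniform coordinate bounds. The exponent in the resulting polynomial loss grows with $n$, but since $n=\dim V$ is fixed this is harmless and absorbed into the implicit constants $\ll_V$.
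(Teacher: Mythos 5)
Your proof is correct but takes a genuinely different route from the paper's. The paper first reduces the \emph{entire} lemma to the case $V = \R^n$ or $\C^n$ with the standard Euclidean norm (every norm on a finite-dimensional space is bi-Lipschitz equivalent to an $l^2$-norm, and that equivalence is absorbed into the $\ll_V$), and then argues in a cycle $(\ref{it:alOrth}) \Rightarrow (\ref{it:alOrthw}) \Rightarrow (\ref{it:alOrthx}) \Leftrightarrow (\ref{it:alOrthl}) \Rightarrow (\ref{it:alOrth})$. In particular, the paper's $(\ref{it:alOrthw}) \Rightarrow (\ref{it:alOrthx})$ step is done in one stroke by wedging: for $x = \sum x_i a_i$, it compares $\norm{x \wedge a_2 \wedge \dotsb \wedge a_n}$ from above by $\norm{x}\norm{a_2}\dotsm\norm{a_n}$ and from below by $\abs{x_1}\,\norm{a_1 \wedge \dotsb \wedge a_n}$, with no induction. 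You instead prove $(\ref{it:alOrth}) \Leftrightarrow (\ref{it:alOrthx})$ directly in the general normed setting using the quotient-norm identity $d(x, \Span(a_1,\dotsc,a_{k-1})) = \abs{x_k}\, d(a_k, \Span(a_1,\dotsc,a_{k-1}))$ for $x = \sum_{j\le k} x_j a_j$, together with a peeling induction on the largest nonzero coordinate; this is more self-contained (it does not invoke the $l^2$ reduction for the part of the lemma stated for arbitrary norms) at the price of an explicit $\rho_1^{-O(n)}$ loss. For $(\ref{it:alOrthw}) \Leftrightarrow (\ref{it:alOrthl})$ your Hadamard/cofactor computation is more explicit than the paper's one-line assertion that the equivalence $(\ref{it:alOrthx}) \Leftrightarrow (\ref{it:alOrthl})$ is clear. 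Both approaches are sound and yield the same polynomial dependence on $\rho_i$.
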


In condition \ref{it:alOrth}, we adhere to the convention that $\Span(\varnothing)$ means the zero subspace. This lemma is already known in \cite[Lemma 7.5]{EskinMozesOh} and \cite[Lemma 2.16]{Saxce}. We give an alternative proof.

\begin{proof}
Every norm on a finite dimensional linear space is equivalent to an $l^2$-norm. Hence it suffices to prove the equivalences in the case where $V= \R^n$ or $\C^n$ endowed with the standard norm. First, \ref{it:alOrth} implies \ref{it:alOrthw} since we have
\[\norm{a_1\wedge \dotsb \wedge a_n} = \prod_{k=1}^n d(a_k, \Span(a_1,\dotsc,a_{k-1})).\]

To see that \ref{it:alOrthw} implies \ref{it:alOrthx}, let $x \in E$, then $x = x_1a_1 + \dotsb + x_na_n$ with $(x_i)_i$ the coordinates of $x$ in $(a_i)_i$. On the one hand, 
\[\norm{x \wedge a_2 \wedge \dotsb \wedge a_n} \leq \norm{x}\norm{a_2}\dotsm \norm{a_n} \leq \rho_3^{-(n-1)}\norm{x}.\]
On the other hand, $x \wedge a_2\wedge \dotsb \wedge a_n = x_1 a_1\wedge \dotsb \wedge a_n$ so
\[\norm{x \wedge a_2 \wedge \dotsb \wedge a_n} = \abs{x_1}\, \norm{a_1\wedge \dotsb \wedge a_n} \geq \rho_3 \abs{x_1}.\]
Hence $\abs{x_1} \leq \rho_3^{-n} \norm{x}$ and the proof is similar for the other coordinates.

Equivalence between \ref{it:alOrthx} and \ref{it:alOrthl} is clear. 

Finally, \ref{it:alOrthl} implies \ref{it:alOrth} because the inequality in \ref{it:alOrth} holds for an orthonormal basis with $\rho_1 = 1$ and a $\rho_4^{-1}$-bi-Lipschitz map will only introduce a factor $\rho_4^{-1}$ or $\rho_4$ to these inequalities.
\end{proof}

\begin{rmq}
From the proof we see that the implied constant in the notation $\gg_{V}$ in the lemma can be $1$ if $V$ is endowed with an $l^2$-norm. Also, if $V_0$ is a fixed normed vector space, then this implied constant is uniform for all subspaces $V$ of $V_0$. 
\end{rmq}

Lemma~\ref{lm:rhoOrth} suggests the following definition.
\begin{déf}
Let $0 < \rho \leq 1$ be a parameter. We say a basis $(a_1,\dotsc,a_n)$ of a normed vector space $V$ is \emph{$\rho$-almost orthonormal} if it satisfies the condition \ref{it:alOrth} in Lemma~\ref{lm:rhoOrth} with $\rho_1 = \rho$.
\end{déf}

\begin{déf}
Let $0 < \rho \leq 1$ be a parameter. Let $V$ be a normed vector space. We say that a subset $A \subset V$ is \emph{$\rho$-away from linear subspaces} if for every proper linear subspace $W \subset V$, there is $a \in A$ such that $d(a,W) \geq \rho$.

Let $E$ be a normed algebra. We say that a subset $A \subset E$ is \emph{$\rho$-away from subalgebras} if for every proper subalgebra $W \subset E$, there is $a \in A$ such that $d(a,W) \geq \rho$.

In a similar way, we define the notion of being \emph{$\rho$-away from Lie subalgebras}.
\end{déf}

We have the following observation.
\begin{lemm}\label{lm:awayBasis}
Let $0 < \rho \leq \Inv{2}$ be a parameter. In a normed vector space $V$ of finite dimension, if a subset $A \subset \Ball(0,\rho^{-1})$ is $\rho$-away from linear subspaces, then $A$ contains a $\rho$-almost orthonormal basis. Conversely, if $A$ contains a $\rho$-almost orthonormal basis, then $A$ is $\rho^{O_V(1)}$-away from subspaces.
\end{lemm}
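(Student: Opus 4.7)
\textbf{Plan for proving Lemma~\ref{lm:awayBasis}.}

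For the first direction, the plan is to construct the basis greedily, applying the hypothesis one dimension at a time. Start with the trivial subspace $W_0 = \{0\}$, which is proper (since $\dim V \geq 1$), and use the hypothesis to extract $a_1 \in A$ with $\norm{a_1} = d(a_1, W_0) \geq \rho$. Having chosen $a_1, \dotsc, a_{k-1}$ linearly independent, set $W_{k-1} = \Span(a_1, \dotsc, a_{k-1})$; if $k \leq \dim V$ then $W_{k-1}$ is proper, so the hypothesis yields $a_k \in A$ with $d(a_k, W_{k-1}) \geq \rho$. In particular $a_k \notin W_{k-1}$, so the $a_i$ remain linearly independent. After $n = \dim V$ steps we obtain a basis $(a_1, \dotsc, a_n) \subset A$, and $\norm{a_k} \leq \rho^{-1}$ because $A \subset \Ball(0, \rho^{-1})$. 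This is exactly condition \ref{it:alOrth} of Lemma~\ref{lm:rhoOrth}, so the basis is $\rho$\dash almost orthonormal.

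For the converse, assume $(a_1, \dotsc, a_n) \subset A$ is $\rho$\dash almost orthonormal, and let $W \subset V$ be any proper linear subspace. The idea is that if every $a_k$ were too close to $W$, then the $a_k$ would be an almost-dependent family, contradicting the good-basis property. More precisely, suppose $d(a_k, W) < \epsilon$ for every $k$, and pick $w_k \in W$ with $\norm{a_k - w_k} < \epsilon$. Since $\dim W \leq n-1$, the vectors $w_1, \dotsc, w_n$ are linearly dependent, so there are scalars $\lambda_1, \dotsc, \lambda_n$, not all zero, with $\sum \lambda_k w_k = 0$; normalize so that $\max_k \abs{\lambda_k} = 1$. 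Then
\[
\norme{\sum_k \lambda_k a_k} = \norme{\sum_k \lambda_k (a_k - w_k)} \leq n\epsilon.
\]
Now invoke the equivalence in Lemma~\ref{lm:rhoOrth}: condition \ref{it:alOrth} with parameter $\rho$ implies condition \ref{it:alOrthx} with some $\rho_2 \gg_V \rho^{O(1)}$, giving $\abs{\lambda_k} \leq \rho_2^{-1} \norme{\sum_j \lambda_j a_j} \leq \rho_2^{-1} n \epsilon$ for each $k$. Taking the maximum over $k$ forces $1 \leq \rho_2^{-1} n \epsilon$, i.e. $\epsilon \geq \rho_2/n$. Contrapositively, for any $\epsilon < \rho_2/n$ there must exist $a_k \in A$ with $d(a_k, W) \geq \epsilon$, so $A$ is $\rho^{O_V(1)}$\dash away from linear subspaces.

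The only real subtlety is the second direction, where one has to convert the "being almost a basis" property into a quantitative lower bound on the distance to an arbitrary proper subspace. The greedy construction in the first direction is straightforward, and the quantitative loss $\rho \mapsto \rho^{O_V(1)}$ in the second direction comes entirely from invoking Lemma~\ref{lm:rhoOrth} to pass from condition \ref{it:alOrth} to condition \ref{it:alOrthx}; no further estimates are required.
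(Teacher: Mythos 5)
Your first direction is the same greedy construction as the paper. Your converse takes a genuinely different route: you approximate each $a_k$ by a nearby $w_k \in W$, use the fact that the $n$ vectors $w_1,\dotsc,w_n$ in the $(\leq n-1)$-dimensional subspace $W$ must be linearly dependent, and then invoke Lemma~\ref{lm:rhoOrth}\ref{it:alOrthx} to show the dependency coefficients cannot all reach size $1$ unless $\epsilon$ is at least $\rho_2/n$. The paper's argument is slicker: it picks a single unit vector $x$ with $d(x,W)=1$, expands $x = \sum_i x_i a_i$ in the basis with $\abs{x_i} \leq \rho^{-O_V(1)}$ (again from Lemma~\ref{lm:rhoOrth}\ref{it:alOrthx}), and concludes directly from $1 = d(x,W) \leq \sum_i \abs{x_i}\,d(a_i,W)$ that some $d(a_i,W) \geq \rho^{O_V(1)}$, avoiding the choice of approximants, the dependency relation, and the normalization. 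Both arguments draw their quantitative content from exactly the same place — the coordinate bound in condition \ref{it:alOrthx} of Lemma~\ref{lm:rhoOrth} — so they give the same quality of bound; the paper's version is simply shorter. Your argument is correct.
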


\begin{proof}
Assume that $A \subset \Ball(0,\rho^{-1})$ is $\rho$-away from linear subspaces. We can construct a $\rho$-basis from the set $A$ by induction. For $k = 1, \dotsc, \dim(V)$, suppose that $a_1,\dotsc,a_{k-1}$ are constructed, then  $\Span(a_1,\dotsc,a_{k-1})$ is a proper subspace of $V$. Hence there is $a_k \in A$ such that $d(a_k, \Span(a_1,\dotsc,a_{k-1})) \geq \rho$.

Conversely, assume that $A$ contains a $\rho$-almost orthonormal basis $(a_i)$. For any proper linear subspace $W \subset V$, there is $x \in V$ such that $\norm{x} = d(x,W) = 1$. By Lemma~\ref{lm:rhoOrth}, we can write $x = \sum_i x_i a_i$, with $\abs{x_i} \leq \rho^{-O_V(1)}$, for all $i$. Consequently,
\[d(x,W) \leq \sum_i \abs{x_i}d(a_i,W) \leq \rho^{-O_V(1)}\sum_i d(a_i,W).\]
Hence there is $i$ such that $d(a_i,W) \geq \rho^{O_V(1)}$.
\end{proof}

\begin{déf}
Let $0 < \rho \leq 1$ be a parameter. Let $A$ be a subset of $\End(\R^n)$. We say that $A$ \emph{acts $\rho$-irreducibly on $\R^n$} if for every proper nonzero linear subspace $W \subset \R^n$, there is $a \in A$ and $w \in W \cap \Ball(0,1)$ such that $d(aw,W) \geq \rho$.

We say that a subalgebra $E \subset \End(\R^n)$ \emph{acts $\rho$-irreducibly on $\R^n$} if the set $E\cap \Ball(0,1)$ acts $\rho$-irreducibly on $\R^n$.
\end{déf}

\subsection{Additive combinatorial tools}

Let us denote by $\OB(N)$ an unspecified finite set of cardinality $O(N)$. Let $\delta > 0$ be the scale. Each of the following combinatorial facts has a (better-known) counterpart in the discrete setting. We will need them in the discretized setting. Throughout this subsection, $A$ and $B$ denote bounded subsets of a normed vector space (or algebra when multiplication is involved). All implied constants in Landau and Vinogradov notations depend on the implicit ambient space.

\begin{lemm}[Ruzsa's covering lemma]\label{lm:RuzsaCov}
For all $0 < \rho < 1$, if $\Ncov(A + B) \leq \inv{\rho} \Ncov(A)$, then
\[B \subset A - A + \OB(\inv{\rho}) + \Ball(0,\delta).\]
\end{lemm}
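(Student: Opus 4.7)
The plan is to adapt the classical greedy argument for Ruzsa's covering lemma to the discretized setting, with care taken to track the scale $\delta$.

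First, I would construct a set $X \subset B$ by a greedy maximality procedure. Specifically, take $X$ to be a maximal subset of $B$ with the property that for any two distinct $x, y \in X$, the translates $A + x$ and $A + y$ are at distance at least $\delta$ apart (i.e., $d(A+x, A+y) \geq \delta$). Such a maximal $X$ exists by Zorn's lemma or simply by iterated greedy selection (one can always take it finite once the bound in the next step is proven, but finiteness will follow a posteriori).

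Next I would bound $\abs{X}$. The separation property ensures that if for each $x \in X$ we pick a maximal $(\delta/2)$-separated subset $\tilde{A}_x$ of $A + x$, then the union $\bigsqcup_{x \in X} \tilde{A}_x$ is still $(\delta/2)$-separated as a subset of $A + X \subset A + B$. Using the two-sided comparison from the preliminaries ($\Ncov[\delta/2] \ll_V \Ncov[\delta] \leq \abs{\tilde{A}_x}$-type inequalities), this yields
\[
\abs{X} \cdot \Ncov(A) \ll_V \Ncov[\delta/2](A+B) \ll_V \Ncov(A+B) \leq \inv{\rho}\,\Ncov(A),
\]
and therefore $\abs{X} \ll_V \inv{\rho}$, so $X$ is an $\OB(\inv{\rho})$.

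Finally, the maximality of $X$ gives the covering. For any $b \in B$, either $b \in X$, in which case $b = 0 + b \in (A - A) + X$ trivially; or $b \notin X$, and then adjoining $b$ must violate the separation, meaning there exists $x \in X$ with $d(A + b,\, A + x) < \delta$. Hence there exist $a_1, a_2 \in A$ with $\norm{(a_1 + x) - (a_2 + b)} < \delta$, giving $b = (a_2 - a_1) + x + v$ with $\norm{v} < \delta$, i.e.\ $b \in A - A + X + \Ball(0,\delta)$. This completes the inclusion.

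The main (minor) obstacle is bookkeeping around the scale: one must choose the right "safety margin" in the separation condition so that the disjointness argument gives a genuinely $\delta$-separated (or $\delta/2$-separated) family after picking internal $\delta$-nets, while the covering conclusion still comes out with a ball of radius exactly $\delta$ rather than $2\delta$ or $O(\delta)$. Choosing the threshold $\geq \delta$ for pairwise distance of translates (rather than $> \delta$ or $\geq 2\delta$) is what makes both halves of the argument close cleanly.
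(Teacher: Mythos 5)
Your proof is correct and is essentially the same argument as in the paper. You choose a maximal subset $X \subset B$ such that the translates $A+x$, $x \in X$, are pairwise at distance $\geq \delta$; the paper equivalently takes $B_0 \subset B$ maximal such that the sets $b + A + \Ball(0,\delta/2)$, $b \in B_0$, are pairwise disjoint. The size bound via $(\delta/2)$-separated nets and the covering step via maximality are the same pigeonhole argument in a slightly different dress (there is a harmless sign typo in the line $b=(a_2-a_1)+x+v$; it should be $b=(a_1-a_2)+x-v$, which is immaterial since $A-A$ and $\Ball(0,\delta)$ are symmetric).
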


\begin{proof}
Let $B_0$ be a maximal subset of $B$ such that the translates $\bigl(b + A + \Ball(0,\frac{\delta}{2})\bigr)_{b\in B_0}$ are disjoint. On the one hand, for every $b \in B$, the translate $b + A + \Ball(0,\frac{\delta}{2})$ is not disjoint from $b' + A + \Ball(0,\frac{\delta}{2})$ for some $b' \in B_0$ which means $b \in A - A + B_0 + \Ball(0,\delta)$. On the other hand, by the disjointness,
\[\Ncov(A + B) \gg \Ncov[\frac{\delta}{2}](A + B_0) \geq \abs{B_0} \Ncov[\frac{\delta}{2}](A).\]
Hence $\abs{B_0} \ll \inv{\rho}$.
\end{proof}

\begin{lemm}[Plünnecke-Ruzsa inequality]\label{lm:RuzsaSum}
For any parameter $0 < \rho \leq \Inv{2}$, if $\Ncov(A + A) \leq \inv{\rho} \Ncov(A)$ then for all integers $k \geq 0$ and $l \geq 0$,
\[\Ncov(kA - lA) \ll \rho^{-O(k+l)} \Ncov(A).\]
\end{lemm}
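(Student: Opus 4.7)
The strategy is to adapt the classical Plünnecke-Ruzsa argument to the discretized setting, going through a discretized Ruzsa triangle inequality.

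First, I would establish the discretized Ruzsa triangle inequality: for bounded subsets $X, Y, Z$ of the ambient normed space,
\[\Ncov(Y - Z)\,\Ncov(X) \ll \Ncov(X - Y)\,\Ncov(X - Z).\]
The argument mirrors the classical proof. Fix a maximal $3\delta$-separated subset $\tilde W \subset Y - Z$ and, for each $w \in \tilde W$, a decomposition $w = y(w) - z(w)$ with $y(w) \in Y$, $z(w) \in Z$; fix also a maximal $3\delta$-separated subset $\tilde X \subset X$. The map
\[\phi\colon \tilde W \times \tilde X \longrightarrow (X - Y) \times (X - Z), \qquad (w, x) \longmapsto \bigl(x - y(w),\, x - z(w)\bigr)\]
is injective on well-separated pairs: if $\phi(w_1, x_1)$ and $\phi(w_2, x_2)$ lie within $\delta$ of each other in the product metric, then subtracting the two coordinates of the outputs yields $\norm{w_1 - w_2} \leq 2\delta$, which forces $w_1 = w_2$ by the separation of $\tilde W$, and then $\norm{x_1 - x_2} \leq \delta$ forces $x_1 = x_2$. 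Hence the image $\phi(\tilde W \times \tilde X)$ is a $\delta$-separated subset of $(X - Y) \times (X - Z)$, whose cardinality is $\ll \Ncov(X - Y)\,\Ncov(X - Z)$.

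Applying the triangle inequality with $X = -A$ and $Y = Z = A$ immediately yields $\Ncov(A - A)\,\Ncov(A) \ll \Ncov(A + A)^2 \leq \rho^{-2}\,\Ncov(A)^2$, hence $\Ncov(A - A) \ll \rho^{-2}\,\Ncov(A)$. The general case is then handled by induction on $s = k + l$: the cases $s \leq 2$ are either trivial or the two treated here, and the inductive step applies the triangle inequality with auxiliary sets chosen as in Petridis's proof of the classical Plünnecke-Ruzsa inequality, reducing the complexity of one cofactor while incurring a multiplicative loss of $O(\rho^{-C})$ per step. Iterating yields $\Ncov(kA - lA) \ll \rho^{-O(k+l)}\Ncov(A)$.

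The main obstacle is ensuring that the implicit constants in the triangle inequality depend only on the ambient normed space and not on $A$, $\rho$, or $\delta$. This requires care in the choice of separation scale when passing to maximal separated subsets and in converting between packing and external covering numbers; the equivalence $|\tilde A| \asymp \Ncov(A)$ recorded at the start of Section~\ref{sc:prelim} makes this routine once one allows the implicit constants to absorb a factor depending on the dimension of the ambient space.
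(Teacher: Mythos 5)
Your discretized Ruzsa triangle inequality is correctly established: the injectivity argument on well-separated pairs is clean, and passing between maximal separated subsets and covering numbers costs only dimension-dependent constants, exactly as the preliminaries record. The application with $X=-A$, $Y=Z=A$ giving $\Ncov(A-A)\ll\rho^{-2}\Ncov(A)$ is also fine.

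The gap is in the inductive step. The Ruzsa triangle inequality alone does not imply the Pl\"unnecke-Ruzsa inequality, and the induction on $s=k+l$ does not close. Try $s=3$: to bound $\Ncov(3A)$ you must write $3A=Y-Z$ and apply the triangle inequality with some auxiliary $X$ to get $\Ncov(3A)\Ncov(X)\ll\Ncov(X-Y)\Ncov(X-Z)$. Whatever splitting $Y,Z$ you choose among iterated sums of $\pm A$ and whatever $X$ you take, at least one of $X-Y$, $X-Z$ has complexity $\geq 3$ (e.g.\ $Y=2A$, $Z=-A$, $X=A$ gives $X-Y=A-2A$; $Y=A$, $Z=-2A$, $X=-A$ gives $X-Z=-A+2A$), so you never reduce to previously handled cases. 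This is not an oversight in your bookkeeping: it is a known feature of the Ruzsa triangle inequality that it is genuinely weaker than Pl\"unnecke-Ruzsa. What Petridis's proof actually supplies, and what you would need to discretize, is the Pl\"unnecke-Petridis lemma: pick a nonempty $B\subset A$ minimizing $|B+A|/|B|=K'$, prove $|B+A+C|\leq K'|B+C|$ for all $C$, iterate to get $|B+kA|\leq K'^k|B|$, and only then invoke the triangle inequality with $X=B$. You have not stated, proved, or even flagged a discretized analogue of the minimizing-subset lemma, and it is not routine to transfer: cardinalities of subsets interact much more cleanly with Pl\"unnecke-type double counting than covering numbers do, and a ``minimizing subset'' at scale $\delta$ is a delicate object.

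The paper sidesteps all of this by a cleaner reduction: replace $A$ by the lattice set $A'=\{a\in\delta\cdot\Z^n \mid A\cap(a+\Ball(0,\delta))\neq\varnothing\}$, observe $A$ and $A'$ are within Hausdorff distance $\delta$, so $|A'+A'|\ll\Ncov(A+A)$ and $\Ncov(kA-lA)\ll_{k,l}|kA'-lA'|$, and then apply the classical discrete Pl\"unnecke-Ruzsa inequality to the finite set $A'\subset\delta\Z^n$ as a black box. That route imports the hard combinatorial content from the discrete theorem rather than re-deriving it, and it is the standard device for this kind of transfer; I would recommend it over re-proving Petridis in the discretized setting.
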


\begin{proof}
Consider the case where $A \subset \R^n$ with $l^\infty$-norm on $\R^n$.  We will approximate $A$ by points on the lattice $\delta \cdot \Z^n$. Let
\[A' = \ens{a \in \delta \cdot \Z^n \mid A \cap \bigl(a + \Ball(0,\delta)\bigr) \neq \varnothing}.\]
The sets $A$ and $A'$ are close in Hausdorff distance : $A \subset A' + \Ball(0,\delta)$ and $A' \subset A + \Ball(0,\delta)$. As a consequence, 
$A'+ A' \subset A + A + \Ball(0,2\delta)$, and hence 
\[\abs{A' + A'} \ll \Ncov(A+A) \leq \inv{\rho} \Ncov(A) \ll \inv{\rho}\abs{A'}.\]
Moreover, for any $k \geq 0$ and any $l \geq 0$, $kA - lA \subset kA' - lA' + \Ball(0,(k+l)\delta)$, and hence 
\[\Ncov(kA-lA) \ll_{k,l} \abs{kA'-lA'}.\]
We conclude the proof by applying the classical Plünnecke-Ruzsa inequality (see \cite[Proposition 2.26]{TaoVu} or \cite{Petridis}) to the set $A'$ in the discrete additive group $(\delta\cdot \Z^n, +)$
\end{proof}

The following lemma is the sum-product analogue of the "small tripling implies small n-pling" property in groups. This discretized version can be proved by mimicking the proof of its discrete counterpart in \cite[Lemma 5.5]{BreuillardIHP}.
\begin{lemm}\label{lm:AplusAA}
For any parameter $0 < \rho \leq \Inv{2}$, if $A \subset \Ball(0,\inv{\rho})$ and $\Ncov(A + A) + \Ncov(A + A \cdot A) \leq \inv{\rho}\Ncov(A)$, then for any positive integer $s$,
\[\Ncov(\sg{A}_s) \leq \rho^{-O_s(1)} \Ncov(A).\]
\end{lemm}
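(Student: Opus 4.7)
Set $K = \rho^{-1}$. The plan is to mimic the discrete sum-product Plünnecke-Ruzsa calculus, using Lemmas~\ref{lm:RuzsaCov} and~\ref{lm:RuzsaSum} in place of their cardinality versions. Applied to the hypothesis $\Ncov(A+A) \leq K\Ncov(A)$, Plünnecke-Ruzsa yields $\Ncov(kA - lA) \leq K^{O(k+l)}\Ncov(A)$ for all non-negative $k$, $l$. Applied to $\Ncov(A + A\cdot A) \leq K\Ncov(A)$, Ruzsa's covering lemma produces a finite set $F \subset A\cdot A$ of size $\abs{F} \leq O(K)$ with
\[A\cdot A \subset A - A + F + \Ball(0,\delta);\]
in particular $\Ncov(A\cdot A) \leq K^{O(1)}\Ncov(A)$ and $F \subset \Ball(0, K^2)$.

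Each element of $\sg{A}_s$ is a polynomial of complexity at most $s$ in elements of $A \cup (-A)$; fully distributing multiplications over additions rewrites it as a signed sum of at most $2^s$ monomials, each of degree at most $s$. Setting $B_s := \bigcup_{1 \leq j \leq s} A^j$, this yields the inclusion $\sg{A}_s \subset k_s B_s - l_s B_s$ for some integers $k_s, l_s \leq 2^s$. It therefore suffices to prove the small-doubling bound $\Ncov(B_s + B_s) \leq K^{O_s(1)}\Ncov(A)$, since Plünnecke-Ruzsa applied to $B_s$ then yields $\Ncov(\sg{A}_s) \leq \Ncov(k_s B_s - l_s B_s) \leq K^{O_s(1)}\Ncov(A)$, which is the desired estimate.

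I would prove the doubling bound by induction on $s$, carried out in tandem with the auxiliary bound $\Ncov(B_s + A\cdot B_s) \leq K^{O_s(1)}\Ncov(A)$. The base case $s = 1$ is the hypothesis. In the inductive step, Ruzsa's covering lemma applied to the auxiliary bound at level $s$ yields an inclusion $A\cdot B_s \subset B_s - B_s + F_s + \Ball(0,\delta)$ for some finite $F_s \subset A\cdot B_s$ with $\abs{F_s} \leq K^{O_s(1)}$. Combined with the decomposition $B_{s+1} = B_s \cup A^{s+1} \subset B_s \cup A\cdot B_s$ and the first-level identity $A\cdot A \subset A - A + F + \Ball(0,\delta)$, each new expression at level $s+1$ reduces to the additive form $kB_s - lB_s + G + \Ball(0, O_s(\delta))$ with finite $\abs{G} \leq K^{O_s(1)}$, whose covering number is controlled by Plünnecke-Ruzsa applied to $B_s$.

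The main technical obstacle lies in the inductive step for the auxiliary bound: when the basic identity for $A\cdot A$ is substituted into a higher product, one encounters sets of the form $A \cdot f$ for $f \in F_s$, which are scaled copies of $A$ rather than additive pieces of $B_s$. A naive bound $\Ncov(X + Y) \leq \Ncov(X)\Ncov(Y)$ would then cost an extra factor of $\Ncov(A)$ and ruin the estimate. The resolution exploits that $f \in A\cdot B_s$, so $f = a\cdot b$ with $a \in A$, $b \in B_s$; iterating the first-level identity gives $A\cdot f \subset (A\cdot b - A\cdot b) + F\cdot b + \Ball(0, K^{O_s(1)}\delta)$, and the level-$s$ Ruzsa covering $A\cdot b \subset A\cdot B_s \subset B_s - B_s + F_s + \Ball(0,\delta)$ places $A\cdot f$ itself into the additive form $kB_s - lB_s + G_f + \Ball$ with $\abs{G_f} \leq K^{O_s(1)}$. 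Summing over $f \in F_s$ preserves this form and keeps the total finite correction bounded by $K^{O_s(1)}$, closing the induction.
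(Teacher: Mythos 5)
Your proposal is correct, and in fact it is the kind of argument the paper alludes to without writing out: the paper's own "proof" is a one-line remark that the lemma "can be proved by mimicking the proof of its discrete counterpart in [BreuillardIHP, Lemma 5.5]". Your Ruzsa-covering plus Plünnecke--Ruzsa argument, run as a double induction on $B_s = \bigcup_{1\le j\le s}A^j$ and the auxiliary quantity $\Ncov(B_s + A\cdot B_s)$, is the natural way to carry that out in the discretized, non-commutative setting, and you correctly identify the key wrinkle — after substituting the base covering $A\cdot A\subset A-A+F+\Ball(0,\delta)$ one meets sets $A\cdot f$ for $f\in F_s$, which are multiplicative dilates rather than additive pieces of $B_s$ — and resolve it by unwinding $f=ab$, reusing $A\cdot a\subset A\cdot A\subset A-A+F+\Ball(0,\delta)$ and then the level-$s$ covering $A\cdot b\subset B_s-B_s+F_s+\Ball(0,\delta)$, so that $A\cdot f$ re-enters the additive form $kB_s-lB_s+G_f+\Ball$ with $\abs{G_f}\le K^{O_s(1)}$. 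Two minor imprecisions, neither of which affects the argument: the accumulated error balls have radius $K^{O_s(1)}\delta$ rather than $O_s(\delta)$ (each substitution multiplies by an element of norm up to $K^{O_s(1)}$), which merely contributes another absorbed $K^{O_s(1)}$ factor to the covering number; and the monomial count after expanding an element of $\sg{A}_s$ is at most $s$, not $2^s$, since the recursion only ever multiplies $\sg{A}_{s}$ by a single element of $\sg{A}_1$ — your $2^s$ is a harmless overestimate, and the inclusion $\sg{A}_s\subset k_sB_s-l_sB_s$ with $k_s,l_s=O_s(1)$ stands.
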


\begin{déf}
Let $n$ and $m$ be positive integers. For a bounded subset $A \subset \R^n$ and a Lipschitz map $\phi \colon \R^n \to \R^m$. Define the \emph{$\phi$-energy of $A$ at scale $\delta$} to be
\[\En_\delta(\phi,A) = \Ncov\bigl(\ens{(a,a') \in A \times A \mid \norm{\phi(a)-\phi(a')} \leq \delta}\bigr).\]
\end{déf}
Here we view $A \times A$ as a subset of $\R^n \times \R^n \simeq \R^{2n}$. Thus, the distance on $A \times A$ is induced by the Euclidean distance on $\R^{2n}$. 

\begin{lemm}\label{lm:phiEnergy}
Let $n$ and $m$ be positive integers. Let $A \subset \R^n$ be a bounded subset and $\phi \colon \R^n \to \R^m$ a $\inv{\rho}$-Lipschitz map.
\begin{enumerate}
\item We have
\[ \Ncov\bigl(\phi(A)\bigr) \gg \frac{\Ncov(A)^2}{\En_\delta(\phi,A)}.\]
\item Let $\tilde A$ be a maximal $\delta$-separated subset of $A$. Then
\[\En_\delta(\phi,A) \ll \#\ens{(a,a') \in \tilde A \times \tilde A \mid \norm{\phi(a) - \phi(a')} \leq (1 + 2\inv{\rho})\delta}.\]
\end{enumerate}
\end{lemm}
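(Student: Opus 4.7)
Both parts follow from straightforward counting, but the two directions are essentially independent so I would treat them separately.

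For \textbf{(i)}, the plan is a standard Cauchy--Schwarz on fibers of $\phi$. Fix a maximal $\delta$-separated subset $\tilde A \subset A$, so that $\abs{\tilde A} \gg \Ncov(A)$. Let $N \ll \Ncov(\phi(A))$ and pick balls $\Ball(y_1,\delta/2),\dotsc,\Ball(y_N,\delta/2)$ covering $\phi(A)$. Put $\tilde A_i = \tilde A \cap \inv{\phi}(\Ball(y_i,\delta/2))$, so $\sum_i \abs{\tilde A_i} \geq \abs{\tilde A}$. By Cauchy--Schwarz
\[
\sum_{i=1}^N \abs{\tilde A_i}^2 \;\geq\; \frac{\abs{\tilde A}^2}{N} \;\gg\; \frac{\Ncov(A)^2}{\Ncov(\phi(A))}.
\]
Inside each fiber, any two points $a,a'$ satisfy $\norm{\phi(a)-\phi(a')} \leq \delta$, so the disjoint union $\bigsqcup_i \tilde A_i \times \tilde A_i$ sits inside the set whose covering number defines $\En_\delta(\phi,A)$. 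The key small observation is that $\tilde A \times \tilde A$ is $\delta$-separated in $\R^{2n}$ (if one coordinate differs, that coordinate contributes at least $\delta$), and in a fixed dimension a $\delta$-ball contains only $O(1)$ pairwise $\delta$-separated points, so $\Ncov$ is comparable to cardinality on this set. This yields $\En_\delta(\phi,A) \gg \sum_i \abs{\tilde A_i}^2$, which is the desired inequality.

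For \textbf{(ii)}, I would use that $\tilde A$ is $\delta$-dense in $A$ (by maximality of a $\delta$-separated subset). Let $S = \ens{(a,a') \in A \times A \mid \norm{\phi(a)-\phi(a')}\leq \delta}$. For each $(a,a')\in S$ choose $b,b'\in \tilde A$ with $\norm{a-b}\leq \delta$ and $\norm{a'-b'}\leq \delta$; then by the Lipschitz assumption
\[
\norm{\phi(b)-\phi(b')} \;\leq\; \norm{\phi(a)-\phi(a')} + 2\inv{\rho}\delta \;\leq\; (1+2\inv{\rho})\delta,
\]
so $(b,b')$ lies in the set $T$ appearing on the right-hand side of (ii). Consequently
\[
S \;\subset\; \bigcup_{(b,b')\in T} \Ball_{\R^{2n}}\bigl((b,b'),\,\sqrt{2}\,\delta\bigr),
\]
and each ball of radius $\sqrt{2}\delta$ in $\R^{2n}$ can itself be covered by $O(1)$ balls of radius $\delta$. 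Therefore $\Ncov(S) \ll \abs{T}$, which is exactly (ii).

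The only mildly delicate point is in (i): one has to be careful that the indexing set of pairs lives in $\R^{2n}$ (not $\R^n$), so that separatedness and covering-number comparisons are applied in the correct ambient space. This is a bookkeeping matter rather than a genuine obstacle; I expect no conceptual difficulty in either part.
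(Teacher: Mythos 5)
Your proof of both parts is correct in outline and essentially mirrors the paper's argument. For part (i), the paper also uses Cauchy--Schwarz on the fibers $\tilde A \cap \inv{\phi}(\Ball(y,\delta/2))$; the one wrinkle is that your phrase ``disjoint union $\bigsqcup_i \tilde A_i \times \tilde A_i$'' is not literally correct, since the $\tilde A_i$ can overlap whenever the covering balls $\Ball(y_i,\delta/2)$ do, and without a bound on the multiplicity of the cover the identity $\#\bigl(\bigcup_i \tilde A_i\times \tilde A_i\bigr)=\sum_i \abs{\tilde A_i}^2$ fails. The fix is standard: take the centres $y_i$ to be a maximal $\delta/2$-separated subset of $\phi(A)$, so that $N\ll_m \Ncov(\phi(A))$ and any point of $\R^m$ lies in $O_m(1)$ of the balls; then each pair $(a,a')$ is counted $O_m(1)$ times in $\sum_i\abs{\tilde A_i}^2$, which is enough. (The paper's write-up elides exactly the same point.) For part (ii), you cover the energy set $S$ by $\sqrt{2}\,\delta$-balls centred at points of the target set $T$ and then split each such ball into $O_{2n}(1)$ $\delta$-balls, while the paper instead takes a maximal $8\delta$-separated subset $\Omega\subset S$ and shows the map $(a,a')\mapsto(\tilde a,\tilde a')$ is injective from $\Omega$ into $T$; the two arguments are two sides of the same coin (a covering bound versus a packing/injection bound) and both are correct, with yours being arguably a shade more direct.
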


\begin{proof}
Let $\tilde A$ be a maximal $\delta$-separated subset of $A$. 
\begin{enumerate}
\item Let $Y$ be a finite subset of $\R^m$ such that $\phi(A)$ is covered by the balls of radius $\frac{\delta}{2}$ centered at points in $Y$. Then
\[ \abs{\tilde A} \leq \sum_{y \in Y} \bigl|\tilde A \cap \inv{\phi}\bigl(\Ball(y,\frac{\delta}{2})\bigr)\bigr|,\]
and 
\[\sum_{y \in Y} \bigl|\tilde A \cap \inv{\phi}\bigl(\Ball(y,\frac{\delta}{2})\bigr)\bigr|^2 \ll \En_\delta(\phi,A).\]
It follows from the Cauchy-Schwarz inequality that 
\[\abs{Y} \gg \frac{\abs{\tilde A}^2}{\En_\delta(\phi,A)}.\]
\item For each $a \in A$, choose $\tilde a \in \tilde A$ such that $\norm{a - \tilde a} \leq \delta$. Let $\Omega$ be a maximal $8\delta$-separated subset of $\ens{(a,a') \in A \times A \mid \norm{\phi(a)-\phi(a')} \leq \delta}$. Then the map $(a,a') \mapsto (\tilde a,\tilde a')$ is injective from $\Omega$ to the set on the right-hand side of the desired inequality.\qedhere
\end{enumerate}

\end{proof}

\subsection{\L{}ojasiewicz inequality}
The \L{}ojasiewicz inequality~\cite[Théorème 2, page 62]{Lojasiewicz} is a powerful tool which allows us to extract quantitative estimates from algebraic facts. Let us recall it here. Consider 

\begin{thm}[\L{}ojasiewicz inequality]
\label{thm:Lojasiewicz}
Let $M$ be a real analytic manifold endowed with a Riemannian distance $d$. Let $f \colon M \to \R$ be a real analytic map. If $K$ is a compact subset of $M$, then there is $C > 0$ depending on $K$ and $f$ such that for all $x \in K$,
\[\abs{f(x)} \geq \Inv{C}\min\{d(x,Z)^C,1\}\]
where $Z = \ens{x \in M \mid f(x) = 0}$.
\end{thm}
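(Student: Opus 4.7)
The plan is to reduce the global statement on $K$ to a purely local one around each point of $K$, and then prove the local statement in $\R^n$ using the geometry of real analytic sets. By a finite open cover of $K$ by coordinate charts with compactly contained closures, together with the fact that changing from a Riemannian distance to a Euclidean one on each chart introduces only a bi-Lipschitz factor on a compact set, it is enough to prove the following local version: given an open $U\subset \R^n$ and a real analytic $f\colon U\to \R$ with $f(0)=0$, there exist a compact neighborhood $K_0$ of $0$ and constants $C>0$, $\alpha>0$ such that for all $x\in K_0$,
\[
\abs{f(x)}\geq \frac{1}{C}\, d(x,Z)^{\alpha},
\]
where $Z=\inv{f}(0)$. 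Gluing the local constants over a finite subcover of $K$ (and using that $|f|$ is bounded away from $0$ on the compact set where it does not vanish) yields the theorem with a single $C$ once one takes the minimum with $1$.

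To establish the local version, first I would dispatch the trivial case $f\equiv 0$ on a neighborhood of $0$ (then $Z$ contains that neighborhood and there is nothing to prove), so I may assume $f\not\equiv 0$ near $0$. The key geometric input is that the sets
\[
\Gamma_{\varepsilon}=\ens{x\in K_0 \mid \abs{f(x)}\leq \varepsilon,\ d(x,Z)\geq \varepsilon^{1/\alpha}}
\]
are semianalytic, so I can invoke the curve selection lemma for semianalytic sets: if the desired inequality failed for every $\alpha$, I could extract a sequence $(x_k)$ with $\abs{f(x_k)}/d(x_k,Z)^{k}\to 0$, and then find a real analytic arc $\gamma\colon [0,\eta)\to K_0$ with $\gamma(0)\in Z$ along which the same ratio tends to $0$. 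Along $\gamma$, both $t\mapsto \abs{f(\gamma(t))}$ and $t\mapsto d(\gamma(t),Z)$ admit Puiseux-type expansions in $t$, so each behaves like $t^p$ and $t^q$ with positive rational exponents $p,q$. This forces $\abs{f(\gamma(t))}/d(\gamma(t),Z)^{\alpha}$ to behave like $t^{p-q\alpha}$, which is bounded below by a positive constant as soon as $\alpha\geq p/q$, contradicting the construction.

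The main obstacle is the semianalytic geometry: one needs the stability of semianalytic sets under the operations appearing here (sublevel sets of analytic functions, distance to a semianalytic set via the projection $\ens{(x,y) \mid y\in Z,\, \norm{x-y}\leq r}$, complements, and finite unions and intersections), together with the curve selection lemma. Both are classical and can be quoted from \cite{Lojasiewicz}; once available, the Puiseux argument above is straightforward. A more computational alternative would be to invoke Hironaka's resolution of singularities of $Z$ to locally monomialize $f$, so that $f\circ \pi(u)=u^{\mathbf m} v(u)$ with $v$ a non-vanishing analytic unit, and then read off the exponent $\alpha$ directly from $\mathbf m$; this avoids curve selection but requires a much heavier machinery, so I would prefer the curve-selection route.
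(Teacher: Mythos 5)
The paper does not actually prove the Euclidean case of the \L{}ojasiewicz inequality; it cites~\cite{Lojasiewicz} for that, and the only "proof" given is the reduction from a Riemannian manifold to Euclidean open sets by reading $f$ in coordinate charts and observing that charts are bi-Lipschitz to their images on compact sets. Your chart reduction is exactly the paper's argument. Where you genuinely depart from the paper is that you then try to supply a proof of the Euclidean statement itself, via curve selection and Puiseux exponents. That is a reasonable thing to attempt, and the Puiseux-exponent heuristic is the right intuition; your alternative route via Hironaka monomialization is also a correct and standard proof.

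However, the curve-selection argument as written has a gap. Negating the conclusion produces a sequence $(x_k)$ with $\abs{f(x_k)}<\tfrac{1}{k}\,d(x_k,Z)^k$, where the \emph{exponent varies with $k$}. The curve selection lemma gives you an analytic arc $\gamma$ landing at an accumulation point $z\in Z$ of a \emph{fixed} semianalytic (in fact, because of the distance function, subanalytic) set, and then Puiseux expansions of $\abs{f\circ\gamma}$ and $d(\gamma(\cdot),Z)$ along $\gamma$ produce a single exponent $p/q$ for which the inequality holds \emph{along $\gamma$}. But the $x_k$ need not lie on $\gamma$, and nothing forces the arc produced by curve selection to "interpolate" the sequence; so boundedness along $\gamma$ does not contradict the failure elsewhere. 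Moreover the sets $\Gamma_\varepsilon$ you wrote down are closed and disjoint from $Z$, so curve selection from them cannot produce an arc with $\gamma(0)\in Z$ at all. The standard way to make this kind of argument close is to first reduce to one variable: set $\psi(s)=\min\ens{\abs{f(x)}\mid x\in K_0,\ d(x,Z)=s}$, observe $\psi$ is a positive subanalytic function of $s$ on $(0,s_0]$, and then invoke the Puiseux-type expansion for subanalytic functions of one variable to get $\psi(s)\geq c\,s^{\beta}$ near $0$; this uses projection stability of \emph{sub}analytic (not semianalytic) sets, which is precisely the point you flagged but did not fully confront. Either fix the curve-selection argument along these lines, or simply cite~\cite{Lojasiewicz} for the Euclidean case as the paper does.
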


Above we adhere to the convention that $\min\{d(x,Z)^C,1\} = 1$ if $Z$ is the empty set. This theorem is stated in~\cite{Lojasiewicz} for $M$ which are open sets of Euclidean spaces. To see that is also valid for real analytic manifolds endowed with a Riemannian distance, it suffices to read the map $f$ in coordinate charts and observe that a coordinate chart of a Riemannian manifold is necessarily bi-Lipschitz to its image (endowed with the distance induced from the Euclidean space).

\section{Escaping from subvarieties}\label{sc:escape}
In this section we show that if a subset $A$ of a simple algebra is not trapped in any subalgebra then we can escape from any subvariety within a bounded number of steps using addition and multiplication. The number of necessary steps depends only on the subvariety and the ambient algebra. This is achieved in two steps. First, using only multiplication we can escape from linear subspaces (Proposition~\ref{pr:escapeV}). Then, once the set is away from linear subspaces, we can escape from subvarieties using only addition (Lemma~\ref{lm:escapeSV}). Note that everything is quantitative. By escaping from a subvariety we mean getting outside a neighborhood of that subvariety.

\subsection{Escaping from linear subspaces} Let $A$ be a subset of a normed algebra $E$. Obviously, if $A$ is away from linear subspaces, then it is away from subalgebras. We will see in this subsection that the converse is true if we are allowed to replace $A$ with its product set $A^s$.

The following is the subalgebra version of \cite[Lemma 2.5]{Saxce}. The proof is essentially the same.
\begin{lemm}\label{lm:FiniteA}
Let $0 < \rho \leq \Inv{2}$ be a parameter. Let $A$ be a subset of a normed algebra $E$ of finite dimension. If $A \subset \Ball(0,\inv{\rho})$ and $A$ is $\rho$-away from subalgebras, then $A$ contains a subset of cardinality at most $\dim(E)$ which is $\rho^{O_E(1)}$-away from subalgebras.
\end{lemm}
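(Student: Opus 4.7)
The plan is greedy: iteratively enlarge the subalgebra generated by picking elements of $A$ at distance at least $\rho$ from it, then verify the quantitative conclusion via Lemma~\ref{lm:awayBasis}. Set $W_0 = \R \cdot 1_E$. At step $k \geq 0$, if $W_k \subsetneq E$ then $W_k$ is a proper subalgebra, so the hypothesis yields $a_{k+1} \in A$ with $d(a_{k+1}, W_k) \geq \rho$; take $W_{k+1}$ to be the subalgebra generated by $W_k \cup \ens{a_{k+1}}$. Since $a_{k+1} \notin W_k$, the dimension strictly increases, so after at most $\dim(E) - 1$ steps the chain stabilizes at $W_m = E$, and we set $A_0 = \ens{a_1, \dotsc, a_m}$ with $\abs{A_0} \leq \dim(E)$.

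To make the conclusion quantitative, I would exhibit a $\rho^{O_E(1)}$-almost orthonormal basis $(w_1, \dotsc, w_N)$ of $E$ (where $N = \dim E$) consisting of words of length $O_E(1)$ in the $a_i$. This is built in parallel with the iterative construction: at each stage $k$ I maintain a $\rho^{c_k}$-almost orthonormal basis $B_k$ of $W_k$ made of such words, starting from $B_0 = \ens{1_E}$. At stage $k+1$, I first append $a_{k+1}$ to $B_k$---the hypothesis $d(a_{k+1}, W_k) \geq \rho$ preserves almost-orthonormality. If $\dim W_{k+1}$ exceeds $\dim W_k + 1$, then $W_{k+1}$ is still spanned by iterated products of $a_{k+1}$ with elements of $B_k$ (a dimension argument caps the number of occurrences of $a_{k+1}$ in each word by $\dim E$); a greedy volume-maximization selection from this finite spanning set fills out the basis while losing only a polynomial-in-$\rho$ factor in the almost-orthonormality constant.

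Given such a basis, the conclusion is immediate. Let $W \subsetneq E$ be any proper subalgebra with $d(a_i, W) < \eta$ for all $i$, and pick $b_i \in W$ with $\norm{a_i - b_i} < \eta$. Evaluating each word $w_j$ at $(b_1, \dotsc, b_m)$ produces an element of $W$ (since $W$ is a subalgebra), and as all inputs lie in $\Ball(0, \rho^{-1})$ and each word has length $O_E(1)$, a telescoping estimate yields $d(w_j, W) \leq \rho^{-O_E(1)} \eta$. On the other hand, the converse direction of Lemma~\ref{lm:awayBasis} applied to the basis $(w_j)$ gives $\max_j d(w_j, W) \geq \rho^{O_E(1)}$, forcing $\eta \geq \rho^{O_E(1)}$. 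The main technical obstacle is the middle step: the hypothesis yields only one new direction per stage (namely $a_{k+1}$), whereas $\dim W_{k+1}$ may exceed $\dim W_k + 1$, and the missing directions must be extracted quantitatively from the spanning set of words by a careful volume argument---this is the core of the proof.
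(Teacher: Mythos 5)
Your algorithm differs from the paper's in a crucial way, and the difference is not merely technical: your stopping condition is ``the subalgebra generated by $\{1_E, a_1,\dotsc,a_m\}$ equals $E$,'' but a set that exactly generates $E$ can still be close to a proper subalgebra, so your output need not satisfy the conclusion. Concretely, take $E$ to be the $3$-dimensional commutative algebra of diagonal $3\times 3$ matrices (equivalently $\R^3$ with componentwise multiplication), $a=(1,1+\varepsilon,2)$ for a tiny $\varepsilon>0$, $b=(0,1,0)$, and $A=\ens{a,b}$, which one checks is $\Inv{2}$-away from subalgebras. Your greedy step at $W_0=\R\cdot 1_E$ is free to pick $a_1=a$ (it has $d(a,W_0)\approx\Inv{2}\geq\rho$), and then $W_1=\langle 1_E,a\rangle=E$ because $a$ has distinct entries, so the algorithm halts with output $\ens{a}$. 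But $a$ lies within $\varepsilon$ of the proper subalgebra $F=\ens{(x,x,y)}$, so $\ens{a}$ is not $\rho^{O_E(1)}$-away from subalgebras once $\varepsilon\ll\rho^{O_E(1)}$. Moreover, the obstacle you flag as ``the core of the proof'' --- extracting a $\rho^{O_E(1)}$-almost orthonormal basis from words by a volume argument --- is not fillable here: every word in $1_E$ and $a$ is a power of $a$, any three of them have a Vandermonde-type Gram determinant of size $\asymp\varepsilon$ (two eigenvalues of $a$ are $\varepsilon$-close), so no basis of words in your output can be $\rho^{O_E(1)}$-almost orthonormal. The failure is in the algorithm, not in the missing volume estimate.

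The paper's construction sidesteps this by never consulting the generated subalgebra. At each stage it first asks whether $\ens{a_1,\dotsc,a_{k-1}}$ is already $\rho^C$-away from subalgebras; if so it stops (and the conclusion is immediate from the stopping condition), and if not it picks the obstruction subalgebra $W$ witnessing the failure (so $d(a_i,W)<\rho^C$ for $i<k$), then uses the hypothesis on $A$ to find $a_k$ with $d(a_k,W)\geq\rho$. The key quantitative point is that, writing $a_k$ in the $\rho^{O_E(1)}$-almost orthonormal basis $(a_1,\dotsc,a_{k-1})$ of its linear span, the coefficients are $\rho^{-O_E(1)}$-bounded, so $d(a_k,W)\leq\rho^{-O_E(1)}\cdot\rho^C+d(a_k,\Span(a_1,\dotsc,a_{k-1}))$; taking $C$ large forces $d(a_k,\Span(a_1,\dotsc,a_{k-1}))\geq\rho^2$. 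Thus the $a_i$ stay linearly independent (in fact almost orthonormal), termination within $\dim E$ steps follows, and no basis of words, no volume argument, and no quantification of ``generated subalgebra'' are ever needed. Applied to the example above: after picking $a_1=a$, the paper's test finds $\ens{a}$ is \emph{not} $\rho^C$-away (witnessed by $F$), so it picks $a_2=b$ and only then stops, outputting the correct set $\ens{a,b}$.
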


\begin{proof}
Let $C \geq 1$ be a large constant. Suppose that $a_1,\dotsc ,a_{k-1}$ are constructed. If $\ens{a_1,\dotsc, a_{k-1}}$ is $\rho^C$-away from subalgebras, then we are done. Otherwise there is a proper subalgebra $W$ such that for all $i = 1, \dotsc, k - 1$, $d(a_i,W) < \rho^C$. Then choose from $A$ an element $a_k$ such that $d(a_k,W) \geq \rho$.

We prove by induction that at each step $(a_1,\dotsc ,a_k)$ is a $\rho^{O_E(1)}$-almost orthonormal basis of its linear span $V_k=\Span(a_1,\dotsc ,a_k)$. This is obvious for $k=1$. For $k \geq 2$, suppose that $(a_1,\dotsc,a_{k-1})$ is a $\rho^{O_E(1)}$-almost orthonormal basis of $V_{k-1}$. We can write
$a_k = x_1a_1 + \dotsb + x_{k-1} a_{k-1}+ a'_k$ with $x_i \in \R$ and $\norm{a'_k} = d(a_k,V_{k-1})$. But $x_1a_1 + \dotsb + x_{k-1} a_{k-1}$ is the decomposition of the vector $a_k - a'_k$ in the $\rho^{O_E(1)}$-almost orthonormal basis $(a_1,\dotsc,a_{k-1})$. Hence for every $i = 1, \dotsc k-1$, $\abs{x_i} \leq 2\rho^{-O_E(1)}\norm{a_k} \leq \rho^{-O_E(1)}$. And then
\begin{align*} 
\rho \leq d(a_k,W) &\leq \sum_{i = 1}^{k-1}\abs{x_i} d(a_i,W)  + \norm{a'_k}\\
&\leq \rho^{-O_E(1)}\max_{1 \leq i \leq k-1}d(a_i,W) + \norm{a'_k}\\
& \leq \rho^{C-O_E(1)} + \norm{a'_k}
\end{align*}
When $C$ is large enough, this implies $d(a_k,V_{k-1}) = \norm{a'_k} \geq \rho^2$. It follows that $(a_1,\dotsc ,a_k)$ is a $\rho^{O_E(1)}$-almost orthonormal basis of $V_k$.

We conclude that the construction must stop after at most $\dim(E)$ steps.
\end{proof}

Then we have the analogue of \cite[Proposition 2.7]{Saxce}.
\begin{lemm}\label{lm:awWwaW}
Let $0 < \rho \leq \frac{1}{2}$ be a parameter. Let $A$ be a subset of a normed simple algebra $E$ of finite dimension. If $A \subset \Ball(0,\inv{\rho})$ and $A$ is $\rho$-away from subalgebras, then for every proper nonzero linear subspace $W$ of $E$, there is $w \in W \cap \Ball(0,1)$ and $a \in A$ such that
\[d(aw,W) \geq \rho^{O_E(1)} \quad \text{or} \quad d(wa,W) \geq \rho^{O_E(1)}.\]
\end{lemm}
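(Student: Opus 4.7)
The plan is to attach to each proper nonzero subspace $W \subset E$ the two one-sided stabilizer subalgebras, use simplicity of $E$ to argue that at least one is proper, and then convert the $\rho$-away hypothesis into the desired multiplicative obstruction via the \L{}ojasiewicz inequality. For $W$ proper nonzero, define
\[\Stab_l(W) = \ens{x \in E \mid xW \subseteq W}, \qquad \Stab_r(W) = \ens{x \in E \mid Wx \subseteq W}.\]
Both are subalgebras of $E$ containing $1_E$. At least one of them is proper: indeed, $\Stab_l(W) = \Stab_r(W) = E$ would make $W$ a nonzero proper two-sided ideal of $E$, contradicting simplicity. Without loss of generality assume $\Stab_l(W) \ne E$; the hypothesis then supplies some $a \in A$ with $d(a, \Stab_l(W)) \geq \rho$.

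The crux of the argument is the quantitative comparison
\[d(x, \Stab_l(W)) \leq C\, F_l(x, W)^{1/C}, \qquad F_l(x, W) = \sup_{w \in W \cap \Ball(0, 1)} d(xw, W),\]
valid uniformly for $x \in \Ball(0, 1)$ and $W$ proper nonzero with $\Stab_l(W) \ne E$, with some $C = C_E > 0$. To apply Theorem~\ref{thm:Lojasiewicz} I would work with the analytic surrogate $F_l^{(2)}(x, W) = \normHS{\phi_l(x, W)}^2$, where $\phi_l \colon W \to E/W$ is the linear map $w \mapsto xw \bmod W$. Via the orthogonal projection $P_W$ onto $W$, which is analytic on each $\Gr(k, E)$, the function $F_l^{(2)}$ is real-analytic on $E \times \Gr(k, E)$ with zero locus $\ens{(x, W) \mid x \in \Stab_l(W)}$, so Theorem~\ref{thm:Lojasiewicz} on the compact set $\Balf(0, 2) \times \Gr(k, E)$ lower-bounds $F_l^{(2)}$ by a power of the ambient distance to this zero locus. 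Passing from the ambient distance to the single-fiber quantity $d(x, \Stab_l(W))$ requires stratifying $\Gr(k, E)$ by the upper-semicontinuous dimension of $\Stab_l(W)$, applying \L{}ojasiewicz on each stratum where $\Stab_l$ is analytic in $W$, and propagating the inequality to the strata boundaries by semicontinuity.

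Once the comparison is in hand, I would apply it to the rescaled element $\tilde a = \rho a \in \Ball(0, 1)$ and exploit that $\Stab_l(W)$ is a linear subspace and $F_l$ is $1$-homogeneous in $x$; this converts $d(a, \Stab_l(W)) \geq \rho$ into $F_l(a, W) \geq \rho^{O_E(1)}$, after which unfolding the supremum produces a $w \in W \cap \Ball(0, 1)$ with $d(aw, W) \geq \rho^{O_E(1)}$ (the right-multiplication alternative arises by choosing $\Stab_r$ in place of $\Stab_l$). The principal obstacle is precisely the stratification step: the dimension of $\Stab_l(W)$ can jump with $W$, so Theorem~\ref{thm:Lojasiewicz} cannot be applied directly to the non-analytic function $d(x, \Stab_l(W))$; going through the analytic surrogate $F_l^{(2)}$ and exploiting semicontinuity is what keeps the comparison constant $C$ uniform in $W$.
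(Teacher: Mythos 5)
Your argument rests on the claimed uniform comparison $d(x,\Stab_l(W)) \leq C\,F_l(x,W)^{1/C}$ with $C = C_E$, valid for all $x \in \Ball(0,1)$ and all $W \in \Gr(k,E)$ with $\Stab_l(W)$ proper. This inequality is false. Take $E = \Mat_2(\R)$, let $e_{ij}$ denote the elementary matrices, and set $W_m = \R\bigl(e_{11} + \tfrac{1}{m}e_{22}\bigr)$ and $x = e_{12}$. Since $e_{11} + \tfrac{1}{m}e_{22}$ is invertible, $\Stab_l(W_m) = \R\cdot\Id_2$, so $d\bigl(e_{12}, \Stab_l(W_m)\bigr)$ is a constant bounded away from zero, independent of $m$. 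On the other hand $e_{12}\bigl(e_{11}+\tfrac{1}{m}e_{22}\bigr) = \tfrac{1}{m}e_{12}$, which is Hilbert--Schmidt orthogonal to the diagonal generator of $W_m$, so $F_l(e_{12},W_m) \asymp 1/m \to 0$. Thus the right-hand side of your comparison tends to $0$ while the left-hand side does not, for any fixed $C$. The culprit is exactly the phenomenon you flagged: $W_m$ converges to the exceptional line $\R e_{11}$, whose stabilizer is the upper-triangular algebra (dimension~$3$), strictly larger than $\R\cdot\Id_2$, and $x = e_{12}$ sits in that larger stabilizer. Crucially, this failure already occurs along a sequence lying entirely in the \emph{generic} stratum where $\dim\Stab_l$ is minimal, so the semicontinuity/stratification step does not rescue it: \L{}ojasiewicz applied on the ambient $E \times \Gr(k,E)$ bounds $F_l^{(2)}$ below by a power of the distance to its zero locus in the \emph{product}, and that ambient distance can be much smaller than the fiber distance $d(x,\Stab_l(W))$ (one moves $W$ towards $\R e_{11}$), while \L{}ojasiewicz on compact subsets of a stratum gives constants that degenerate at the stratum boundary.

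The paper's proof avoids this precisely by not fixing a single element $a$. It first reduces via Lemma~\ref{lm:FiniteA} to $A = \ens{a_1,\dotsc,a_n}$ of cardinality $n = \dim E$, then argues by contradiction and applies \L{}ojasiewicz to the single analytic function $f(W;x_1,\dotsc,x_n) = \sum_i \int_{W\cap\Ball(0,1)} d(x_iw,W)^2 + d(wx_i,W)^2\,\dd w$ on the compact set $\Gr(k,E)\times\Ball(0,1)^n$. The output is a nearby $(W_1;b_1,\dotsc,b_n)$ with $f = 0$, i.e.\ every $b_i$ lies in the subalgebra $E_{W_1} = \ens{x : xW_1 \subset W_1,\; W_1x \subset W_1}$, and $\norm{a_i - b_i} < \rho$ for all $i$. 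It is perfectly acceptable that $W_1 \neq W_0$, because the $\rho$-away hypothesis applies to \emph{every} proper subalgebra of $E$, in particular to $E_{W_1}$; simplicity of $E$ then forces $E_{W_1} = E$, hence $W_1$ is a two-sided ideal, contradiction. In short, the paper trades the (false) fiberwise comparison you need for a joint \L{}ojasiewicz inequality that is allowed to perturb $W$, and exploits that the hypothesis is quantified over all subalgebras rather than tied to the particular $\Stab_l(W)$. To repair your argument you would need to replace the single chosen $a$ by the full finite set $a_1,\dotsc,a_n$ and perform the \L{}ojasiewicz step jointly in $(W;x_1,\dotsc,x_n)$, which is essentially the paper's proof.
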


\begin{proof}
In view of Lemma~\ref{lm:FiniteA}, we can assume that $A$ has exactly $n = \dim(E)$ elements $a_1,\dotsc, a_n$. We can further assume that $A \subset \Ball(0,1)$ for we can replace $A$ with a contraction $\rho \cdot A$. We will treat the case where the norm on $E$ is Euclidean. The general case follows easily since every norm on $E$ is equivalent to an Euclidean one. Suppose the lemma were false. Then there would be a linear subspace $W_0$ of dimension $0 < k < n$ such that for all $w \in W_0 \cap \Ball(0,1)$ and all $i = 1,\dotsc, n$, $d(a_iw,W_0) < \rho^C$ and $d(wa_i,W_0) < \rho^C$ for some large $C$. The actual value of this constant will be determined by the \L{}ojasiewicz inequality used below.

Consider the map $f \colon \Gr(k,E) \times E^n \to \R$ defined by
\begin{equation*}
f(W;x_1,\dotsc,x_n) = \displaystyle\sum_{i=1}^n\int_{\raisebox{-0.4ex}{$\scriptstyle W\cap \Ball(0,1)$}} \hspace{-2em} d(x_iw,W)^2 + d(wx_i,W)^2 \,\dd w
\end{equation*}
where the integration $\dd w$ is with respect to the $k$-dimensional Lebesgue measure on $W \cap \Ball(0,1)$. This map is well-defined and real analytic. This can be seen by observing that the tautological bundle $\pi \colon T \to \Gr(k,E)$ of the Grassmannian has around every point a real analytic local trivialization $\phi \colon \UC \times \R^k \to \inv{\pi}(\UC)$ such that $\forall W \in \UC$, $\phi(W,\bullet) \colon \R^k \to \inv{\pi}(\{W\}) \simeq W$ is an isometry of Euclidean spaces.

From the choice of $W_0$ it follows that $f(W_0;a_1,\dotsc,a_n) \ll \rho^C$. Hence by the \L{}ojasiewicz inequality (Theorem~\ref{thm:Lojasiewicz}) applied to the compact set $\Gr(k,E) \times \Ball(0,1)^n$, there is $W_1 \in \Gr(k,E)$ and $b_1,\dotsc,b_n \in E$ such that $f(W_1;b_1,\dotsc,b_n) = 0$ and $\forall i = 1, \dotsc, n$, $\norm{a_i - b_i} < \rho$ if the constant $C$ is chosen large enough. The map $f$ vanishing at $(W_1;b_1,\dotsc,b_n)$ is equivalent to every $b_i$ being in the subalgebra
\[E_{W_1} = \ens{ x\in E \mid xW_1 \subset W_1 \text{ and } W_1x \subset W_1}.\]

Now our set $A$ is not $\rho$-away from the subalgebra $E_{W_1}$. Hence $E_{W_1}$ must be the whole algebra $E$, which in turn implies that $W_1$ is a two-sided ideal in $E$. This contradicts the assumption that $E$ is simple.
\end{proof}

\begin{prop}\label{pr:escapeV}
Let $0 < \rho \leq \frac{1}{2}$ be a parameter. Let $A$ be a subset of a normed simple algebra $E$ of dimension $n$. Assume that $A \subset \Ball(0,\inv{\rho})$ and $A$ is $\rho$-away from subalgebras. Write $A_1= \ens{1_E} \cup A$. Then for any $\eta \in E$ with $\rho \leq \norm{\eta} \leq \inv{\rho}$, the set $A_1^{n}\eta A_1^{n}$ contains a $\rho^{O_E(1)}$-almost orthonormal basis of $E$. And, equivalently, $A_1^{n}\eta A_1^{n}$ is $\rho^{O_E(1)}$-away from linear subspaces in $E$. 
\end{prop}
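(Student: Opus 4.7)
My plan is to build a basis $(a_1,\dots,a_n)$ of $E$ by induction on $k$, maintaining two invariants: $a_i \in A_1^{i-1}\eta A_1^{i-1}$ for every $i \leq k$, and $(a_1,\dots,a_k)$ is $\rho^{O_E(1)}$-almost orthonormal in its span $W_k := \Span(a_1,\dots,a_k)$. The base case is $a_1 = \eta$, which meets both invariants thanks to $\rho \leq \norm{\eta} \leq \inv{\rho}$. At the inductive step with $k < n$, the subspace $W_k$ is proper and nonzero, so Lemma~\ref{lm:awWwaW} applied to $W_k$ produces $a \in A$ and $w \in W_k \cap \Ball(0,1)$ with either $d(aw,W_k) \geq \rho^{O_E(1)}$ or the symmetric bound $d(wa,W_k) \geq \rho^{O_E(1)}$. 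These two cases are handled identically, so I focus on the first.

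The key bridge from the abstract element $w$ given by Lemma~\ref{lm:awWwaW} to a concrete element of $A_1^{k}\eta A_1^{k-1}$ is a decomposition-plus-pigeonhole argument. Using the already-established almost-orthogonality of $(a_1,\dots,a_k)$ and Lemma~\ref{lm:rhoOrth}, I expand $w = \sum_{i=1}^{k} x_i a_i$ with $\abs{x_i} \leq \rho^{-O_E(1)}$. Writing $aw = \sum_i x_i (a a_i)$ and applying the triangle inequality $d(aw,W_k) \leq \sum_i \abs{x_i}\, d(a a_i, W_k)$ forces some index $i_0$ to satisfy $d(a a_{i_0}, W_k) \geq \rho^{O_E(1)}$. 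I set $a_{k+1} := a a_{i_0} \in A \cdot A_1^{i_0-1}\eta A_1^{i_0-1} \subset A_1^{k}\eta A_1^{k-1}$, which also lies in $A_1^{n}\eta A_1^{n}$ after padding with $1_E$ and preserves the membership invariant. Passing to an equivalent submultiplicative norm (whose equivalence constants are absorbed into $O_E$), $a_{k+1}$ is a product of at most $O_E(1)$ factors each of norm at most $\inv{\rho}$, so $\norm{a_{k+1}} \leq \rho^{-O_E(1)}$; the almost-orthogonality parameter obeys a linear recursion of the form $c_{k+1} \leq C_E c_k + C_E$, which after $n = O_E(1)$ iterations stays bounded by $O_E(1)$.

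After $n$ iterations the construction yields a $\rho^{O_E(1)}$-almost orthonormal basis of $E$ contained in $A_1^{n-1}\eta A_1^{n-1} \subset A_1^{n}\eta A_1^{n}$, which is the first assertion. The equivalent formulation, that $A_1^{n}\eta A_1^{n}$ is $\rho^{O_E(1)}$-away from linear subspaces, is then an immediate application of Lemma~\ref{lm:awayBasis}. I expect the main obstacle to be less about any individual inductive step and more about bookkeeping of the exponent loss across all $n$ iterations: each invocation of Lemma~\ref{lm:rhoOrth} inflates the almost-orthogonality parameter polynomially, so verifying that the accumulated loss remains $\rho^{O_E(1)}$ relies crucially on the fact that $n = \dim(E)$ is itself absorbed into the subscript $E$.
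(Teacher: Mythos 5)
Your proof is correct and follows essentially the same route as the paper's: start from $\eta$, apply Lemma~\ref{lm:awWwaW} to the current span $W_k$, expand $w$ in the almost-orthonormal basis via Lemma~\ref{lm:rhoOrth}, and pigeonhole through the triangle inequality to extract $a_{k+1} = a\,a_{i_0}$ (or the mirror image $a_{i_0}\,a$ in the second case). You are in fact a bit more careful than the paper about two bookkeeping points the paper leaves implicit — the norm bound $\norm{a_{k+1}} \leq \rho^{-O_E(1)}$ needed for the almost-orthonormality definition, and the padding by $1_E$ that lands $a_{k+1}$ inside $A_1^n\eta A_1^n$ — so there is nothing to correct.
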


\begin{proof}
We construct the basis inductively. First, let $\eta_1 = \eta$. Then for $k = 1,\dotsc,n-1$, suppose that after $k$ steps, we have constructed $\eta_1,\dotsc,\eta_k \in A^k\eta A^k$ such that $(\eta_1,\dotsc,\eta_k)$ is a $\rho^{O_E(1)}$-almost orthonormal basis of $W_k = \Span(\eta_1,\dotsc,\eta_k)$. 
By Lemma~\ref{lm:awWwaW}, there is $w \in W_k \cap \Ball(0,1)$ and $a \in A$ such that either $d(aw,W) \geq \rho^{O_E(1)}$ or $d(wa,W) \geq \rho^{O_E(1)}$. Let us deal with the former case, the latter case being similar. We can write $w$ in the $\rho^{O_E(1)}$-almost orthonormal basis $(\eta_1,\dotsc,\eta_k)$, 
\[w = w_1\eta_1 + \dotsb + w_k\eta_k.\] 
By Lemma~\ref{lm:rhoOrth}, the coefficients $w_i$ satisfy $\forall i$, $\abs{w_i} \leq \rho^{-O_E(1)}$. Hence
\[d(aw,W_k) \leq \sum_{i=1}^k \abs{w_i}d(a\eta_i,W_k) \leq \rho^{-O_E(1)}\max_{i=1,\dotsc,k} d(a\eta_i,W_k).\] 
Hence it is possible to pick $i_* \in \ensA{k}$ so that, writing $\eta_{k+1} = a\eta_{i_*}$, we have $d(\eta_{k+1},W_k) \geq \rho^{O_E(1)}$. Then $(\eta_1,\dotsc,\eta_{k+1})$ is a $\rho^{O_E(1)}$-almost orthonormal basis of its linear span $W_{k+1}$. This finishes the proof of the inductive step.
\end{proof}

\subsection{Escaping from subvarieties, \texorpdfstring{$\R^n$}{R^n} case}

In this subsection we show how to escape from subvarieties using addition starting from a set which is away from linear subspaces. Clearly it suffices to deal with the case of a hypersurface. Note that if the hypersurface is defined by a polynomial $P$ then a point $a \in \R^n$ is far away from it if $\abs{P(a)}$ is large. Here is the statement.

\begin{lemm}\label{lm:escapeSV}
Let $V$ be a normed vector space and $P \colon V \to \R$ a nonzero polynomial map. Then there is an positive integer $s$ depending only on the degree of $P$ and the dimension of $V$ such that the following holds for all parameters $0 < \rho \leq \Inv{2}$. Let $A$ be a subset of $V$ with $A \subset \Ball(0,\rho^{-1})$. If $A$ is $\rho$-away from linear subspaces then there is $a \in s(A\cup\ens{0})$ such that
\[\abs{P(a)} \gg_{V,P} \rho^{O_{V,P}(1)}.\]
\end{lemm}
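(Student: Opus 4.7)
The strategy is to pull the polynomial $P$ back through a linear map built from a $\rho$-almost orthonormal basis contained in $A$, obtaining a polynomial in $n = \dim(V)$ real variables whose coefficients inherit a quantitative lower bound, and then to locate a large value on a bounded integer grid by Lagrange interpolation.

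First, I would invoke Lemma~\ref{lm:awayBasis} to extract from $A$ a $\rho$-almost orthonormal basis $(a_1,\dotsc,a_n)$ of $V$. Let $L\colon \R^n\to V$ be the linear map sending the standard basis to $(a_1,\dotsc,a_n)$; by the equivalences in Lemma~\ref{lm:rhoOrth}, $L$ is $\rho^{-O_V(1)}$-bi-Lipschitz. Consider the polynomial
\[
Q(t_1,\dotsc,t_n) \,=\, P\bigl(t_1 a_1 + \dotsb + t_n a_n\bigr),
\]
of degree $d := \deg P$. Since $(a_i)$ is a basis, $Q$ is not identically zero; the goal is to find a nonnegative integer point $t^\ast$ with small coordinates and $\abs{Q(t^\ast)}\gg_{V,P} \rho^{O_{V,P}(1)}$, since then $a^\ast := \sum t^\ast_i a_i$ will belong to $s(A\cup\ens{0})$ for $s = nd$ and will satisfy the desired lower bound on $\abs{P(a^\ast)}$.

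The main step is a quantitative lower bound on the coefficients of $Q$. Fix once and for all a reference orthonormal basis $(e_1,\dotsc,e_n)$ of $V$ and write $P$ in these coordinates as a polynomial $R(x_1,\dotsc,x_n)=\sum_\beta r_\beta x^\beta$; since $P\neq 0$, there is a constant $c_P>0$ with $\max_\beta \abs{r_\beta}\geq c_P$. Letting $M$ be the matrix of $L$ in the basis $(e_i)$, we have $Q(t) = R(Mt)$. The linear map $R\mapsto R\circ M$ on polynomials of degree $\leq d$ is a bijection whose inverse is composition with $M^{-1}$; expanding a monomial $(M^{-1}x)^\alpha$ one sees that this inverse has operator norm (in the $\ell^\infty$ norm on coefficient vectors) bounded by $C(d,n)\norm{M^{-1}}^d$. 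Since $\norm{M^{-1}}\leq \rho^{-O_V(1)}$, this gives
\[
\max_\alpha \abs{q_\alpha} \,\geq\, \frac{c_P}{C(d,n)\,\norm{M^{-1}}^d}\,\gg_{V,P}\, \rho^{O_{V,P}(1)}.
\]

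Finally, I would use the elementary fact that for any polynomial $Q$ of degree $\leq d$ in $n$ variables one has $\max_\alpha\abs{q_\alpha} \leq C'(d,n)\,\max_{t\in\{0,1,\dotsc,d\}^n}\abs{Q(t)}$; this follows from iterated one-variable Lagrange interpolation on the distinct nodes $0,1,\dotsc,d$, the Vandermonde interpolation matrix being invertible with entries depending only on $d$. Combining with the previous bound produces $t^\ast\in\{0,1,\dotsc,d\}^n$ with $\abs{Q(t^\ast)}\gg_{V,P}\rho^{O_{V,P}(1)}$; setting $a^\ast := t^\ast_1 a_1+\dotsb+t^\ast_n a_n\in s(A\cup\ens{0})$ with $s=nd$ concludes the proof. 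The main obstacle is really the middle step: producing a coefficient of $Q$ of size at least a fixed power of $\rho$, rather than merely knowing $Q$ is nonzero; this is where the quantitative linear independence of the extracted basis enters, via the bound on $\norm{M^{-1}}$.
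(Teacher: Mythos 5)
Your proposal is correct and follows essentially the same route as the paper: both extract a $\rho$\dash almost orthonormal basis from $A$ via Lemma~\ref{lm:awayBasis}, transport the problem to $\R^n$ by the induced $\rho^{-O_V(1)}$\dash bi-Lipschitz linear map, and then locate a large value of the transported polynomial on the integer grid $\{0,\dotsc,d\}^n$ via the invertibility of the evaluation map on that grid (Lagrange interpolation). The paper phrases the intermediate quantitative step in terms of $\sup_{\Ball(0,1)}\abs{P}$ rather than the $\ell^\infty$ norm of the coefficient vector, but for polynomials of degree at most $d$ these quantities are comparable, so the two bookkeeping choices are interchangeable; your coefficient-based bound $\max_\alpha\abs{q_\alpha}\geq c_P/(C(d,n)\norm{M^{-1}}^d)$ plays exactly the role of the paper's estimate applied to $P\circ\psi$ with $r=\rho^{-O(1)}$. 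One tiny cosmetic point: since $V$ is merely normed, ``reference orthonormal basis'' should read ``fixed basis''; this has no effect on the argument.
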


It is a straightforward consequence of Lemma~\ref{lm:awayBasis} and the following lemma.

\begin{lemm}
Let $d \geq 1$ and $0 < \rho < 1$. Let $(a_1,\dotsc,a_n)$ be a $\rho$-almost orthonormal basis of $\R^n$. Define
\[\Gamma_d = \bigl\{x_1a_1 + \dotsb + x_na_n \mid \forall i,\; x_i \in \ens{0,\dotsc,d}\bigr\}.\]
For any polynomial function $P \colon \R^n \to \R$ of degree at most $d$, we have
\[\sup_{x \in \Ball(0,1)}\abs{P(x)} \ll_{n,d} \rho^{-O_{n,d}(1)}\max_{\gamma \in \Gamma_d}\abs{P(\gamma)}.\]
\end{lemm}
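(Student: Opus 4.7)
The plan is to change variables so that $(a_1,\dotsc,a_n)$ becomes the standard basis of $\R^n$: the grid $\Gamma_d$ then becomes the integer lattice box $\ens{0,\dotsc,d}^n$, and the claim reduces to a classical Lagrange interpolation estimate on that box.

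Concretely, define $Q(t_1,\dotsc,t_n) := P(t_1 a_1 + \dotsb + t_n a_n)$, which is a polynomial in $(t_1,\dotsc,t_n)$ of total degree at most $d$. Since $(a_i)$ is $\rho$-almost orthonormal, Lemma~\ref{lm:rhoOrth} (equivalence of \ref{it:alOrth} and \ref{it:alOrthx}) gives, for any $x = \sum_i t_i a_i$ with $\norm{x}\leq 1$, the bound $\abs{t_i} \leq R := \rho^{-O_n(1)}$ on every coordinate. Hence
\[ \sup_{x \in \Ball(0,1)} \abs{P(x)} \;\leq\; \sup_{t \in [-R,R]^n} \abs{Q(t)}. \]

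For the main step, having total degree $\leq d$ implies in particular that $Q$ has degree $\leq d$ in each variable separately, so it is recovered from its values on $\ens{0,\dotsc,d}^n$ via the tensorised Lagrange interpolation formula
\[ Q(t) \;=\; \sum_{k \in \ens{0,\dotsc,d}^n} Q(k) \prod_{i=1}^n \prod_{\substack{j=0\\ j\neq k_i}}^{d} \frac{t_i - j}{k_i - j}. \]
For $\abs{t_i}\leq R$, each numerator factor is bounded by $R+d$, each denominator factor is a nonzero integer and hence has absolute value $\geq 1$, and there are $(d+1)^n$ terms in the outer sum. This yields $\abs{Q(t)} \ll_{n,d} (R+d)^{nd} \max_{k}\abs{Q(k)}$. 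Finally, $Q(k) = P(k_1 a_1 + \dotsb + k_n a_n)$ with $k_1 a_1 + \dotsb + k_n a_n \in \Gamma_d$, and $(R+d)^{nd} \ll_{n,d} \rho^{-O_{n,d}(1)}$, which gives the stated inequality.

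The argument is routine rather than delicate; the only substantive input is the coordinate bound provided by Lemma~\ref{lm:rhoOrth}. The sole point that requires attention is checking that the $\rho$-losses coming from the change of variables and from the polynomial growth of the Lagrange basis on $[-R,R]^n$ combine into a single $\rho^{-O_{n,d}(1)}$ factor, which they do since $n$ and $d$ are held fixed throughout.
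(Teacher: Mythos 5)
Your proof is correct and follows the same overall structure as the paper's: change variables to send $(a_1,\dotsc,a_n)$ to the standard basis using Lemma~\ref{lm:rhoOrth} to control the distortion, then bound the polynomial on a box in terms of its values on the grid $\ens{0,\dotsc,d}^n$. The only difference is that you make the grid-to-ball estimate explicit via the tensorized Lagrange interpolation formula, whereas the paper argues more abstractly from the injectivity of the evaluation map $P \mapsto (P(\lambda))_{\lambda\in\Lambda_d}$ on the finite-dimensional space $\R_d[X_i]_{1\le i\le n}$; both yield the same $\rho^{-O_{n,d}(1)}$ loss, and your version has the minor virtue of not relying on an implicit appeal to boundedness of the inverse.
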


\begin{proof}
We write $\R_d[X_i]_{1\leq i \leq n}$ for the space of polynomials on $\R^n$ of degree at most $d$. First consider the case where $(a_1,\dotsc,a_n)$ is the standard basis. Let
\[\Lambda_d = \bigl\{(x_1,\dotsc,x_n) \in \R^n \mid \forall i,\; x_i \in \ens{0,\dotsc,d}\bigr\}.\]
By a simple induction on the number of variables $n$, we see that if $P \in \R_d[X_i]_{1\leq i \leq n}$ and vanishes on $\Lambda_d$ then it must be the zero polynomial. Therefore the linear map
\begin{equation*}
\begin{matrix}
  \R_d[X_i]_{1\leq i \leq n} &\to& \R^{\Lambda_d}\\
  P &\mapsto & (P(\lambda))_{\lambda \in \Lambda_d},
\end{matrix}
\end{equation*}
is injective. Hence the coefficients of $P$ are controlled by $\max_{\lambda \in \Lambda_d}\abs{P(\lambda)}$ and consequently for all $r \geq 1$,
\begin{equation}\label{eq:spPgamma}
\sup_{x \in \Ball(0,r)}\abs{P(x)} \ll_{n,d} r^d \max_{\lambda\in \Lambda_d}\abs{P(\lambda)}.
\end{equation}

For the general case consider $\psi \colon \R^n \to \R^n$ the unique linear map which sends the standard basis to $(a_1,\dotsc,a_n)$. Thus $\psi(\Lambda_d) = \Gamma_d$ and, by Lemma~\ref{lm:rhoOrth}, $\psi(\Ball(0,\rho^{-O(1)})) \supseteq \Ball(0,1)$. We obtain the desired estimate by applying inequality~\eqref{eq:spPgamma} to the polynomial $P\circ\psi$ and $r = \rho^{-O(1)}$.
\end{proof}

\section{Trace set estimates}\label{sc:trace}

To each finite-dimensional real algebra $E$ we can associate a trace function $\tr_E$ and a bilinear form $\tau_E$. Let $x$ be an element of $E$, we define $\tr_E(x)$ to be the trace of the left multiplication by $x$ as an endomorphism of $E$.
For example, if $E = \Mat_n(\R)$, then $\tr_E$ is $n$ times the usual trace for matrices.

Let $x,y \in E$. We define $\tau_E(x,y) = \tr_E(xy)$. Thus $\tau_E \colon E \times E \to \R$ is a symmetric bilinear form. Observe that its kernel $\ker \tau_E$ is a two-sided ideal of $\R$. It follows that $\tau_E$ is non-degenerate if $E$ is semisimple, i.e. direct sum of simple algebras. Note that the converse is also true, but we won't need this fact here.

Let $A$ be a bounded subset of a semisimple algebra $E$. In this section, we are interested in the size of the trace set of $A^2$ : 
\[\tr_E(A^2) = \ens{\tr_E(ab) \mid a , b\in A}.\]
The aim is to establish a lower bound under appropriate conditions. Here is the result.

\begin{lemm}\label{lm:traceSet}
Given $\epsilon > 0$, the following is true for sufficiently small $\delta > 0$. Let $E$ be a normed semisimple algebra. Let $A$ be a subset of $E$ such that
\begin{itemize}
\item $A \subset \Ball(0,\delta^{-\epsilon})$,
\item $A$ is $\delta^\epsilon$-away from linear subspaces in $E$.
\end{itemize}
Then $\Ncov(\tr_E(A^2)) \gg_E \delta^{O(\epsilon)}\Ncov(A)^{\Inv{n}}$, where $n = \dim(E)$.
\end{lemm}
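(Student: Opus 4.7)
The plan is to exploit the non-degeneracy of the bilinear form $\tau_E$ (which holds because $E$ is semisimple) together with the fact that the $\delta^\epsilon$-away-from-subspaces hypothesis forces $A$ to contain a well-conditioned basis of $E$. The trace set will then appear as the coordinates of a linear image of $A$ in a basis of $E^*$ defined by these almost-orthonormal vectors.

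First I would apply Lemma~\ref{lm:awayBasis} to extract a $\delta^\epsilon$-almost orthonormal basis $(a_1,\dotsc,a_n)$ of $E$ with $n = \dim(E)$ contained in $A$. I would then introduce the auxiliary linear map $\psi \colon E \to \R^n$ given by
\[\psi(x) = \bigl(\tr_E(a_1 x),\dotsc,\tr_E(a_n x)\bigr).\]
By construction, each coordinate of $\psi(a)$ for $a \in A$ lies in $\tr_E(A\cdot A) = \tr_E(A^2)$, so $\psi(A)$ is contained in the Cartesian product $\tr_E(A^2)^n \subset \R^n$.

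The key step is to show that $\psi$ is $\delta^{-O_E(\epsilon)}$-bi-Lipschitz. Fix an orthonormal basis $(f_1,\dotsc,f_n)$ of $E$ and write $a_i = \sum_k c_{ki} f_k$. By Lemma~\ref{lm:rhoOrth}, the change-of-basis matrix $C = (c_{ki})$ and its inverse both have entries of size $\delta^{-O_E(\epsilon)}$. In these bases, the matrix of $\psi$ factors as $C^\top G$, where $G_{kl} = \tr_E(f_k f_l)$ is the Gram matrix of $\tau_E$. Since $E$ is semisimple, $\tau_E$ is non-degenerate, so $G$ and $G^{-1}$ have norms bounded solely in terms of $E$. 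Combining these bounds, both $\psi$ and $\psi^{-1}$ have operator norm at most $L := \delta^{-O_E(\epsilon)}$.

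Finally, standard covering-number manipulations finish the job. Since $\psi^{-1}$ is $L$-Lipschitz, the preimage of any $\delta$-ball sits in an $L\delta$-ball, hence $\Ncov_\delta(\psi(A)) \geq \Ncov_{L\delta}(A)$. Since each $L\delta$-ball in $E$ is covered by $O_E(L^n)$ balls of radius $\delta$, we obtain $\Ncov_{L\delta}(A) \gg_E L^{-n}\Ncov(A) = \delta^{O_E(\epsilon)}\Ncov(A)$. On the other hand, a coordinate-wise cover of $\tr_E(A^2)^n$ yields $\Ncov_\delta(\psi(A)) \ll_n \Ncov(\tr_E(A^2))^n$. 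Combining the two inequalities and extracting the $n$-th root gives $\Ncov(\tr_E(A^2)) \gg_E \delta^{O(\epsilon)}\Ncov(A)^{1/n}$, as desired. The only substantive obstacle is the bi-Lipschitz estimate on $\psi$, which in turn rests entirely on semisimplicity (to invert $G$) and on the almost-orthonormality of $(a_i)$ (to invert $C$); the rest is bookkeeping.
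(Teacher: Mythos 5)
Your proposal is correct and follows essentially the same approach as the paper: you extract a $\delta^\epsilon$-almost orthonormal basis from $A$ via Lemma~\ref{lm:awayBasis}, then consider the map $x\mapsto\bigl(\tr_E(a_ix)\bigr)_i$ (called $\Theta$ in the paper), and your factorization $C^\top G$ with the invertible Gram matrix is precisely the content of Lemma~\ref{lm:biLin} in the paper. The only cosmetic difference is that you phrase the key estimate as bi-Lipschitzness of $\psi$ rather than "almost injectivity," but the covering-number bookkeeping is the same.
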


We begin with a lemma.

\begin{lemm}\label{lm:biLin}
Let $0 < \rho \leq \Inv{2}$ be a parameter. Let $V$ be a normed vector space of dimension $n$ and $\tau \colon V \times V \to \R$ a non-degenerate bilinear form. If $(a_i)$ is a $\rho$-almost orthonormal basis of $V$, then for all $x \in V$,
\[\norm{x} \ll_{V,\tau} \rho^{-O(1)} \max_i \abs{\tau(a_i,x)}.\]
\end{lemm}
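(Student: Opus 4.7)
The plan is to express $x$ in the basis of $V$ dual to $(a_i)$ with respect to $\tau$, and then to estimate the size of that dual basis. Since $\tau$ is non-degenerate and $(a_i)$ is a basis, there exist unique vectors $a_1^*,\dotsc,a_n^* \in V$ satisfying $\tau(a_j, a_i^*) = \delta_{ij}$, and they form a basis of $V$. Writing $x = \sum_i c_i a_i^*$ and pairing with each $a_j$ via $\tau$ yields $c_i = \tau(a_i, x)$. Hence
\[\norm{x} \leq \sum_{i=1}^n \abs{\tau(a_i, x)}\,\norm{a_i^*} \leq n \cdot \max_i \norm{a_i^*} \cdot \max_i \abs{\tau(a_i, x)},\]
and the lemma reduces to proving $\max_i \norm{a_i^*} \ll_{V,\tau} \rho^{-O(1)}$.

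Let $G$ be the matrix with entries $G_{ij} = \tau(a_i, a_j)$. A direct check shows $a_i^* = \sum_j (G^{-1})_{ji}\, a_j$, so
\[\norm{a_i^*} \leq \rho^{-1} \sum_j \abs{(G^{-1})_{ji}}.\]
I will bound the entries of $G^{-1}$ via the cofactor formula, which requires an upper bound on the entries of $G$ together with a lower bound on $\abs{\det G}$. The first follows from the fact that $\tau$ is a bounded bilinear form on $V$: $\abs{G_{ij}} \leq \norm{\tau}\,\norm{a_i}\,\norm{a_j} \ll_{V,\tau} \rho^{-2}$, so every cofactor of $G$ has absolute value $\ll_{V,\tau} \rho^{-O(1)}$.

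For the determinant, fix an auxiliary $l^2$-norm on $V$ (equivalent to the given one with constants depending only on $V$) and an associated orthonormal basis $(e_i)$. Let $P$ be the linear endomorphism of $V$ sending each $e_i$ to $a_i$, so $G = \transp{P}\tau_0 P$ where $\tau_0 = (\tau(e_i, e_j))_{i,j}$ depends only on $V$ and $\tau$ and is invertible by non-degeneracy. By item \ref{it:alOrthl} of Lemma~\ref{lm:rhoOrth} (applicable in the auxiliary $l^2$-norm, the reduction only costing constants depending on $V$), $P$ is $\rho^{-O(1)}$-bi-Lipschitz, so $\abs{\det P} \gg_V \rho^{O(1)}$ and therefore $\abs{\det G} = (\det P)^2\,\abs{\det \tau_0} \gg_{V,\tau} \rho^{O(1)}$. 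Combining with the cofactor bound yields $\abs{(G^{-1})_{ji}} \ll_{V,\tau} \rho^{-O(1)}$, whence $\norm{a_i^*} \ll_{V,\tau} \rho^{-O(1)}$ as desired. There is no serious obstacle; the only mildly delicate point is the passage between the given norm on $V$ and the auxiliary $l^2$-norm, which is needed to invoke the $l^2$-specific characterization of $\rho$-almost orthonormality.
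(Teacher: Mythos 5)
Your proof is correct. The approach is a more elaborate version of the paper's: the paper reduces to an $l^2$-norm, lets $g$ be the endomorphism sending the standard basis to $(a_i)$, notes by Lemma~\ref{lm:rhoOrth} that $\norm{\inv{g}} \leq \rho^{-O(1)}$, observes directly that the column vector $\bigl(\tau(a_i,x)\bigr)_i$ equals $\transp{g}sx$ (with $s$ the matrix of $\tau$ in the standard basis), and then inverts: $\norm{x} \leq \norm{\inv{s}}\,\norm{\transp{g}^{-1}}\,\norm{\transp{g}sx}$. In your notation $g$ is $P$ and $s$ is $\tau_0$, so the Gram matrix $G = \transp{P}\tau_0 P$ is exactly $\transp{g}s$, and bounding the dual basis $a_i^*$ via the cofactor formula for $\inv{G}$ amounts to redoing, entrywise, the single operator-norm estimate $\norm{\inv{G}} \leq \norm{\inv{s}}\,\norm{\inv{g}}\,\norm{\transp{g}^{-1}}$. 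Both arguments hinge on the same input (the bi-Lipschitz bound from Lemma~\ref{lm:rhoOrth}), but the paper's version bypasses the explicit dual basis and the determinant/cofactor bookkeeping entirely, getting the bound on $\inv{G}$ for free from the factorization $G = \transp{g}s g$. Your route is sound but longer; the direct operator-norm estimate is the cleaner path once you have the $\transp{g}sx$ identity in hand.
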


The $V$ and $\tau$ in the subscript indicate that the implied constant depends not only on $V$ but also on the bilinear form.
 
\begin{proof}
It suffices to deal with the special case where $V = \R^n$ endowed with the standard Euclidean norm. Let $g$ be the endomorphism which maps the standard basis to the $\rho$-almost orthonormal basis $(a_i)$. By Lemma~\ref{lm:rhoOrth}, $\norm{\inv{g}} \leq \rho^{-O(1)}$. Let $s$ be the matrix of $\tau$ in the standard basis. Then a straightforward computation shows that for any $x \in V$, $\prescript{t}{}{\!g}sx$ is the column vector $(\tau(a_i,x))_i$. Hence
\[\norm{x} \leq \norm{\inv{s}} \norm{\prescript{t}{}{\!g}^{-1}} \norm{\prescript{t}{}{\!g}sx} \ll_{n,\tau} \rho^{-O(1)} \max_i \abs{\tau(a_i,x)}.\qedhere\] 
\end{proof}

\begin{proof}[Proof of Lemma \ref{lm:traceSet}]
By Lemma~\ref{lm:awayBasis}, $A$ contains a $\delta^\epsilon$-almost orthonormal basis $(a_i)_{1 \leq i \leq n}$ of $E$. Thus, by Lemma~\ref{lm:biLin} applied to the non-degenerate bilinear form $\tau_E$, for all $x \in E$, $\norm{x} < \delta^{1-O(\epsilon)}$ whenever $\abs{\tr_E(a_ix)} < \delta$ for all $i = 1,\dotsc,n$. 

Consider the map $\Theta \colon A \to \R^n$ defined by $x \mapsto \bigl(\tr_E(a_ix) \bigr)_i$. On the one hand, this map is "almost injective" with the $\delta$-blurred vision : for all $x,y \in A$, if $\norm{\Theta(x)- \Theta(y)} < \delta$ then $\norm{x - y} < \delta^{1-O(\epsilon)}$. It follows that 
\[\Ncov(\Theta(A)) \gg_n \Ncov[\delta^{1-O(\epsilon)}](A) \gg_E \delta^{O(\epsilon)} \Ncov(A).\]
On the other hand, the set $\Theta(A)$ is contained in the $n$-fold Cartesian product $\tr(A^2) \times \dotsb \times \tr(A^2)$, hence 
\[\Ncov(\Theta(A)) \ll_n \Ncov(\tr(A^2))^n.\]
We obtain the desired estimate by combining the two inequalities above.
\end{proof}

\section{Effective Wedderburn theorem}\label{sc:Wedderburn}

The Wedderburn theorem (see \cite[Chapter XVII, \S 3]{Lang_algebra} or \cite{Serre50}) states that if $\R^n$ is an irreducible representation of an algebra $E$, then $E$ is isomorphic to either $\Mat_n(\R)$ or $\Mat_{\frac{n}{2}}(\C)$ or $\Mat_{\frac{n}{4}}(\HH)$. In this section we prove a quantified version of this algebraic fact. From now on, we endow these matrix algebras with operator norms\footnote{Each matrix is seen as an endomorphism of the Euclidean space $\R^n$, $\C^n\simeq \R^{2n}$ or $\HH^n \simeq \R^{4n}$.}. And throughout this paper, $\Mat_{\frac{n}{2}}(\C)_\R$ denotes a fixed embedding of $\Mat_{\frac{n}{2}}(\C)$ in $\Mat_n(\R)$ and $\Mat_{\frac{n}{4}}(\HH)_\R$ a fixed embedding of $\Mat_{\frac{n}{4}}(\HH)$ in $\Mat_n(\R)$.
\begin{prop}\label{pr:Wedder}
Let $0< \rho \leq \Inv{2}$ be a parameter. Let $E$ be a subalgebra of $\Mat_n(\R)$ acting $\rho$-irreducibly on $\R^n$. Then either $E = \Mat_n(\R)$ or $E$ is conjugate to $\Mat_{\frac{n}{2}}(\C)_\R$ or to $\Mat_{\frac{n}{4}}(\HH)_\R$ by a change of basis matrix $g \in \GL(\R^n)$ satisfying $\norm{g} + \norm{\inv{g}} \leq \rho^{-O(1)}$.

In particular, $E$ is isomorphic to $\Mat_n(\R)$ or $\Mat_{\frac{n}{2}}(\C)$ or $\Mat_{\frac{n}{4}}(\HH)$ by an isomorphism which is $\rho^{-O(1)}$-bi-Lipschitz.
\end{prop}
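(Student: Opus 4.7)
My plan uses the commutant $E^! := \{x \in \Mat_n(\R) : xa = ax \text{ for all } a \in E\}$. By classical Schur's lemma and Frobenius' theorem, $E^!$ is a division $\R$-algebra, hence isomorphic to some $K \in \{\R, \C, \HH\}$, and $E = \End_{E^!}(\R^n) \cong \Mat_m(K)$ with $m = n/\dim_\R K$. The type $K$ is determined by $\dim_\R E$; when $K = \R$ one has $E = \Mat_n(\R)$ and the conclusion is immediate. For $K \in \{\C, \HH\}$, the strategy is to construct a well-conditioned $K$-basis of $\R^n$ whose associated $\R$-basis yields the desired conjugator $g$.

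The main technical input is a quantitative Schur lemma: every nonzero $x \in E^!$ satisfies $\sigma_{\min}(x) \geq \rho^{O(1)} \normop{x}$. To prove it, for any unit vector $v \in \R^n$ I iteratively construct, in the spirit of Proposition~\ref{pr:escapeV}, a $\rho^{O(1)}$-almost orthonormal basis $(a_i v)_{1 \leq i \leq n}$ of $\R^n$ with $a_i \in E \cap \Ball(0, \rho^{-O(1)})$: at step $k$ the span $W_k$ of $a_1 v, \ldots, a_k v$ is proper, so $\rho$-irreducibility yields some $a \in E \cap \Ball(0,1)$ and $w \in W_k \cap \Ball(0,1)$ with $d(aw, W_k) \geq \rho$; expanding $w$ in the current basis via Lemma~\ref{lm:rhoOrth} shows that $a_{k+1} := a a_i$ satisfies $d(a_{k+1} v, W_k) \geq \rho^{O(1)}$ for some $i$. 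Then for $x \in E^!$ and $v$ a unit right singular vector with $\norm{xv} = \sigma_{\min}(x)$, the identity $x(a_i v) = a_i(xv)$ gives $\norm{x(a_i v)} \leq \rho^{-O(1)} \sigma_{\min}(x)$ for all $i$, which propagates to $\normop{x} \leq \rho^{-O(1)} \sigma_{\min}(x)$ via the almost-orthonormality of $(a_iv)$.

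I next build a standard basis of $E^!$ with controlled norms. Pick $\iota_1 \in E^!$ with $\tr(\iota_1) = 0$ (possible since $\dim E^! \geq 2$); since $E^!$ is a division algebra and $\tr(\iota_1) = 0$, the minimal polynomial of $\iota_1$ is $t^2 + b$ with $b > 0$, so after normalization $\iota_1^2 = -I$. Since $\iota_1^{-1} = -\iota_1$, the quantitative Schur bound yields $\normop{\iota_1} \leq \rho^{-O(1)}$. For $K = \HH$, pick $\iota_2 \in E^!$ satisfying $\tr(\iota_2) = 0 = \tr(\iota_1 \iota_2)$ (two linear constraints in the $4$-dimensional $E^!$) and normalize; these constraints force the anticommutation $\iota_1 \iota_2 + \iota_2 \iota_1 = 0$, and $\iota_3 := \iota_1 \iota_2$ completes a standard basis with $\normop{\iota_j} \leq \rho^{-O(1)}$.

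Finally, to produce the $K$-basis of $\R^n$, I use the symmetrized inner product $\langle u, w \rangle_* := \sum_{j=0}^{d-1} \langle \iota_j u, \iota_j w \rangle$, which is $\rho^{-O(1)}$-bi-Lipschitz equivalent to the Euclidean inner product, renders each $\iota_j$ unitary (i.e.\ $\iota_j^* \iota_j = I$ for the intrinsic involution $\iota \mapsto \iota^*$ on $E^! \cong K$), and satisfies $\langle v, \iota_j v \rangle_* = 0$ for all $j \geq 1$ and all $v$ (a routine computation using $\iota_j^2 = -I$ and anticommutation). A Gram-Schmidt procedure in $\langle\cdot,\cdot\rangle_*$ then yields vectors $v_1, \ldots, v_m \in \R^n$ with $v_{k+1}$ $\langle\cdot,\cdot\rangle_*$-orthogonal to $E^! v_1 + \dotsb + E^! v_k$; since $E^!$ is closed under $\iota \mapsto \iota^*$ and each $\iota_j$ is $\langle\cdot,\cdot\rangle_*$-unitary, the compound $\R$-basis $(\iota_j v_k)_{0 \leq j < d,\, 1 \leq k \leq m}$ is $\langle\cdot,\cdot\rangle_*$-orthonormal, hence $\rho^{O(1)}$-almost orthonormal in the Euclidean norm. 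In this basis, $E = \End_{E^!}(\R^n)$ takes the form $\Mat_m(K)_\R$ (up to identifying $K^{\mathrm{op}}$ with $K$ via the standard anti-involution, a bounded adjustment), and Lemma~\ref{lm:rhoOrth} supplies $g$ with $\normop{g} + \normop{g^{-1}} \leq \rho^{-O(1)}$. The main obstacle is the quantitative Schur lemma, which requires carefully propagating $\rho$-irreducibility through the inductive construction of $(a_iv)$ starting from an arbitrary unit vector $v$.
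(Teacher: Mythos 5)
Your proposal is correct, but the mechanism differs from the paper's at two points even though both start from the commutant $E^{!}=\CC(E)$. Where the paper's quantitative input is Lemma~\ref{lm:CentFix} (a direct Cartan-decomposition argument: if $g\in\SL_n(\R)$ has a large singular-value gap, then $\CC(g)$ visibly nearly preserves the subspace spanned by the top singular directions, contradicting $\rho$-irreducibility), you instead prove a \emph{quantitative Schur lemma} — every nonzero $x\in E^{!}$ has condition number $\sigma_{\max}(x)/\sigma_{\min}(x)\leq\rho^{-O(1)}$ — via an escape-from-subspaces argument applied to the orbit $(a_iv)$ of a bottom singular vector $v$, in the spirit of Proposition~\ref{pr:escapeV}. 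The two statements are essentially equivalent (after normalizing the determinant the paper's bound gives a condition-number bound), but the proofs are genuinely different: yours recycles the escape machinery, whereas the paper's argument is a self-contained computation. Where the paper then produces the conjugator by passing to $\Mat_n(\C)$, effectively diagonalizing $\varphi(i)$ (Lemma~\ref{lm:effDiag}), and realifying the conjugator (Lemma~\ref{lm:realConj}), you instead build a controlled standard basis $(\iota_j)$ of $E^{!}$ directly (using the Cayley--Hamilton/trace relations plus your Schur bound for the norm control) and then produce a $K$-basis of $\R^n$ by Gram--Schmidt in the symmetrized inner product $\ps{\bullet}{\bullet}_*$ that makes the $\iota_j$ unitary. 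This avoids the complex detour and the realification lemma altogether. One small bookkeeping point worth making explicit in the escape step: since $a_{k+1}=aa_i$ with $\norm{a}\leq 1$, the norms $\norm{a_k}$ stay bounded by $1$ uniformly, which is what keeps the almost-orthonormality constants and, downstream, the Schur constant uniformly $\rho^{O(1)}$. Both approaches buy something: the paper reuses Lemma~\ref{lm:effDiag}, which it needs anyway in the proof of Proposition~\ref{pr:smallSeg}; your approach is more self-contained at the level of this proposition and gives the cleaner abstract intermediate statement (the quantitative Schur lemma), which might be independently useful.
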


This result will be useful in the proof of Theorem~\ref{thm:ActionRn}.

\subsection{Effective diagonalization} 
The following lemma is implicit in \cite[proof of Proposition 7.4]{EskinMozesOh}. We include its proof for the sake of completeness.

\begin{lemm}\label{lm:effDiag}
Let $0 < \rho \leq \Inv{2}$ be a parameter. Let $a \in \Mat_n(\C)$ be a diagonalizable matrix. Assume that $\norm{a} \leq \rho^{-1}$ and its spectrum is a $\rho$-separated set (it may contain multiple eigenvalues). Then $a$ is diagonal in a $\rho^{O(1)}$-almost orthonormal basis of $\C^n$.
\end{lemm}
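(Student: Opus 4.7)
\medskip

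\noindent\textbf{Proof plan.} The idea is to pick an orthonormal basis of each eigenspace of $a$ and concatenate; the separation of eigenvalues combined with $\norm{a} \leq \rho^{-1}$ controls the "angles" between distinct eigenspaces via the Lagrange spectral projections.

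Let $\mu_1,\dotsc,\mu_k$ be the distinct eigenvalues of $a$, with $k \leq n$, and set $V_i = \ker(a - \mu_i \Id)$. Since $a$ is diagonalizable we have $\C^n = V_1 \oplus \dotsb \oplus V_k$. Define the associated Lagrange spectral projections
\[P_i = \prod_{j \neq i} \frac{a - \mu_j \Id}{\mu_i - \mu_j}, \qquad i = 1,\dotsc,k.\]
These satisfy $P_iP_j = \delta_{ij} P_i$, $\sum_i P_i = \Id$, and $\operatorname{im}(P_i) = V_i$. The first step of the proof is to bound their operator norms: every eigenvalue satisfies $\abs{\mu_j} \leq \norm{a} \leq \rho^{-1}$, so each factor obeys $\norm{a - \mu_j\Id} \leq 2\rho^{-1}$; by hypothesis each denominator satisfies $\abs{\mu_i - \mu_j} \geq \rho$; and the product involves at most $n-1$ factors. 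Hence
\[\norm{P_i} \leq (2\rho^{-2})^{n-1} = \rho^{-O(1)}.\]

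Next, in each eigenspace $V_i$ (endowed with the Hermitian inner product inherited from $\C^n$) pick an orthonormal basis $(e_{i,1},\dotsc,e_{i,d_i})$ where $d_i = \dim V_i$. The concatenation $\mathcal{B} = (e_{i,j})_{i,j}$ is a basis of $\C^n$ consisting of eigenvectors of $a$, each of norm $1$. By construction, $a$ is diagonal in $\mathcal{B}$.

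The final step is to verify that $\mathcal{B}$ is $\rho^{O(1)}$-almost orthonormal by checking condition \ref{it:alOrthx} of Lemma~\ref{lm:rhoOrth}. Given any $x \in \C^n$, decompose $x = \sum_i P_i x$ with $\norm{P_i x} \leq \rho^{-O(1)}\norm{x}$ by the norm bound above. Writing $P_i x = \sum_j c_{i,j} e_{i,j}$ in the orthonormal basis of $V_i$, Parseval's identity gives $\abs{c_{i,j}} \leq \norm{P_i x} \leq \rho^{-O(1)}\norm{x}$. Thus every coordinate of $x$ in $\mathcal{B}$ is bounded by $\rho^{-O(1)}\norm{x}$, which is exactly condition \ref{it:alOrthx} of Lemma~\ref{lm:rhoOrth} with $\rho_2 = \rho^{O(1)}$, yielding the desired $\rho^{O(1)}$-almost orthonormality.

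The only quantitative content is the norm bound on the spectral projections, and this is where the two hypotheses ($\norm{a} \leq \rho^{-1}$ and spectrum $\rho$-separated) enter in a completely transparent way; there is no genuine obstacle beyond organizing this bookkeeping.
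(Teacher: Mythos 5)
Your proof is correct, and it takes a genuinely different route from the paper's. The paper argues by induction on the nested spans $V_k = \Span(v_1,\dotsc,v_k)$: it decomposes each new eigenvector $v_{k+1}$ into its component in $V_k$ plus an orthogonal remainder $v'_{k+1}$, and then uses the eigenvalue equation (comparing $av_{k+1} = \lambda_{k+1}v_{k+1}$ with the action of $a$ on the decomposition) together with the $\rho$-separation of the spectrum to force $\norm{v'_{k+1}} \geq \rho^{O_k(1)}$. That argument is a kind of quantitative Gram--Schmidt, and the exponent degrades at each inductive step (it ends up exponential in $n$, which is harmless here since $n$ is fixed). You instead package everything into the Lagrange (Frobenius covariant) spectral projections $P_i$, bound $\norm{P_i}$ once and for all by $(2\rho^{-2})^{n-1}$ directly from the two hypotheses, and then read off all coordinates simultaneously via Parseval in each eigenspace, finally invoking Lemma~\ref{lm:rhoOrth}\ref{it:alOrthx}$\Rightarrow$\ref{it:alOrth}. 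Your version avoids the induction, makes the role of the two hypotheses completely transparent, and yields an explicit exponent that is polynomial in $n$; the paper's version is more elementary in that it uses no functional calculus, only the eigenvalue equation and Lemma~\ref{lm:rhoOrth}, but pays with a less explicit, compounding loss. Both are valid; for the purposes of the paper (where only $\rho^{O(1)}$ is needed) they are interchangeable.
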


\begin{proof}
Let $\lambda_1,\dotsc ,\lambda_n$ be the eigenvalues of $a$ (they appear with the corresponding multiplicity). Let $v_1, \dotsc, v_n$ be the corresponding eigenvectors. Each $v_i$ can be chosen to be of unit length and we can further assume that $(v_i,v_j) = 0$ whenever $i \neq j$ but $\lambda_i = \lambda_j$. For $k = 1, \dotsc, n$, write $V_k = \Span(v_1,\dotsc,v_k)$. We will prove by induction on $k$ that $(v_1,\dots,v_k)$ is a $\rho^{O_k(1)}$-almost orthonormal basis of $V_k$. This is clear for $k = 1$. 

Let $k \in \ensA{n-1}$. Suppose that $(v_1,\dots,v_k)$ is a $\rho^{O_k(1)}$-almost orthonormal basis of $V_k$. Let us show that $d(v_{k+1},V_k) \geq \rho^{O_k(1)}$ and thus $(v_1,\dots,v_k, v_{k+1})$ is a $\rho^{O_k(1)}$-almost orthonormal basis of $V_{k+1}$. Without loss of generality we can assume that among the the eigenvalues $\lambda_1, \dotsc, \lambda_k$, only $\lambda_{l+1},\dotsc, \lambda_k$ are equal to $\lambda_{k+1}$ for some $l \leq k$. Considering the orthogonal projection of $v_{k+1}$ onto $V_k$, we can decompose the vector $v_{k+1}$ into
\begin{equation}\label{eq:vjp1Decom}
v_{k+1} = \sum_{i=1}^k x_iv_i + v'_{k+1},
\end{equation}
where $x_1,\dotsc,x_k \in \C$ and $v'_{k+1} \in V_k^{\perp}$. In particular, $d(v_{k+1},V_k) = \norm{v'_{k+1}}$. We can then express $av_{k+1}$ in two different ways :
\[a v_{k+1} = \lambda_{k+1}v_{k+1} = \sum_{i=1}^k \lambda_{k+1}x_iv_i + \lambda_{k+1}v'_{k+1},\]
and
\[a v_{k+1} = \sum_{i=1}^k \lambda_ix_iv_i + av'_{k+1}.\]
It follows that
\[\sum_{i=1}^l (\lambda_{k+1} - \lambda_i)x_iv_l = \lambda_{k+1}v'_{k+1} - av'_{k+1}.\]

Denote by $w_{k+1}$ the vector on the right-hand side. Its norm can be bounded : $\norm{w_{k+1}} \leq 2\norm{a}\norm{v'_{k+1}} \leq \rho^{-2}\norm{v'_{k+1}}$. The left-hand side gives the coordinates of $w_{k+1}$ in the basis $(v_1,\dots,v_k)$ which is $\rho^{O_k(1)}$-almost orthonormal by the induction hypothesis. Hence, by Lemma~\ref{lm:rhoOrth}, 
\[\forall i = 1,\dotsc,l,\quad \abs{\lambda_{k+1} -\lambda_{i}}\abs{x_i} \leq \rho^{-O_k(1)}\norm{w_{k+1}} \leq \rho^{-O_k(1)}\norm{v'_{k+1}}.\]
Hence for all $i = 1,\dotsc, l$, $\abs{x_i} \leq \rho^{-O_k(1)}\norm{v'_{k+1}}$ thanks to the assumption $\abs{\lambda_{k+1}-\lambda_i} \geq \rho$. 

In order to bound $\abs{x_j}$ for $j \geq l+1$, we take the scalar product with $v_j$ on both sides of \eqref{eq:vjp1Decom}. We obtain
\[ 0 = \sum_{i=1}^l x_i(v_j,v_i) + x_j.\]
Hence for all $j = l+1,\dots k$, $\abs{x_j} \leq \rho^{-O_k(1)}\norm{v'_{k+1}}$.
Using \eqref{eq:vjp1Decom} again we obtain $1 = \norm{v_{k+1}} \ll \rho^{-O_k(1)}\norm{v'_{k+1}}$ and then $\norm{v'_{k+1}} \geq \rho^{O_k(1)}$. This finishes the proof of the inductive step.
\end{proof}

\subsection{Effective Wedderburn theorem in \texorpdfstring{$\Mat_n(\R)$}{M_n(R)}} 

For $x \in \Mat_n(\R)$ and $A \subset \Mat_n(\R)$ denote by $\CC(x)$ and $\CC(A)$ their respective centralizers, i.e.
\[\CC(x) = \ens{y \in \Mat_n(\R) \mid xy = yx},\]
\[\CC(A) = \ens{y \in \Mat_n(\R) \mid \forall x \in A, xy = yx}.\]

\begin{lemm}\label{lm:CentFix}
Let $0 < \rho \leq 1$ be a parameter. For all $g \in \SL_n(\R)$, if $\norm{g} > \rho^{-n}$, then the subalgebra $\CC(g)$ does \emph{not} act $\rho$-irreducibly on $\R^n$.
\end{lemm}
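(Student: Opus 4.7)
The plan is to produce a proper nonzero linear subspace $W \subset \R^n$ with $d(yw, W) < \rho$ for every $y \in \CC(g) \cap \Ball(0,1)$ and $w \in W \cap \Ball(0,1)$, thereby refuting $\rho$-irreducibility. I would organize the argument around the $\R$-irreducible factorization $\mu_g = \prod_i p_i^{e_i}$ of the minimal polynomial of $g$: in all but one sub-case I can take $W$ exactly $\CC(g)$-invariant (so $d(yw, W) = 0$), and the residual case, which is $n = 2$ with $\mu_g$ itself irreducible quadratic, is handled via the singular value decomposition.

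For the exactly-invariant cases: if at least two distinct $p_i$ occur, take $W$ to be any summand of the primary decomposition $\R^n = \bigoplus_i \ker p_i(g)^{e_i}$, each of which is $\CC(g)$-invariant because $\CC(g)$ commutes with $p_i(g)^{e_i}$. If $\mu_g = p^e$ with $e \geq 2$, then $W = \ker p(g)$ is $\CC(g)$-invariant, nonzero (since $p$ shares roots with $\mu_g$, so $\det p(g) = 0$) and proper (since $\deg p < \deg \mu_g$ forces $p(g) \neq 0$). If $\mu_g$ is irreducible of degree $1$, then $g = \lambda \Id$ with $\lambda^n = \det g = 1$, so $\norm{g} = 1$, contradicting $\norm{g} > \rho^{-n} \geq 1$. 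If $\mu_g$ is irreducible of degree $2$, then $n$ is even and $g$ gives $\R^n$ the structure of a $\C$-vector space of $\C$-dimension $n/2$ through $\R[g] \cong \R[X]/(\mu_g) \cong \C$, with $\CC(g) = \End_\C(\R^n)$; for $n \geq 4$, any proper nonzero $\C$-subspace of $\C^{n/2}$ supplies the required $W$.

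The only remaining configuration is $n = 2$ with $\mu_g$ irreducible quadratic, where $\R^2 \cong \C$ has no proper $\C$-subspace. Here I would use the singular value decomposition, producing orthonormal left and right singular bases $(u_1, u_2)$ and $(v_1, v_2)$ with singular values $\sigma_1 \geq \sigma_2 > 0$ satisfying $g v_i = \sigma_i u_i$. Since $\sigma_1 \sigma_2 = |\det g| = 1$ and $\sigma_1 = \norm{g} > \rho^{-2}$, we have $\sigma_2 < \rho^2$. Set $W = \R v_2$; for $y \in \CC(g) \cap \Ball(0,1)$, writing $y v_2 = \alpha v_1 + \beta v_2$, the commutation relation yields $g(y v_2) = y(g v_2) = \sigma_2 \, y u_2$ with $\norm{g(y v_2)} \leq \sigma_2$, while expanding $g(y v_2) = \alpha \sigma_1 u_1 + \beta \sigma_2 u_2$ in the orthonormal $(u_1, u_2)$-basis forces $|\alpha|\sigma_1 \leq \sigma_2$. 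Therefore $d(y v_2, W) = |\alpha| \leq \sigma_2/\sigma_1 = \sigma_2^2 < \rho^4 \leq \rho$, and linearity extends this bound to arbitrary $w \in W \cap \Ball(0,1)$. The main obstacle lies precisely in this last case: the algebraic sub-cases reduce to standard $\R$-algebra structure theory, but the dimension-two irreducible-quadratic case requires a careful quantitative extraction from the commutation identity, crucially exploiting $\sigma_1 \sigma_2 = 1$ to convert the ratio $\sigma_2/\sigma_1$ into $\sigma_2^2$ and thereby beat the threshold $\rho$.
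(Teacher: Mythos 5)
The case analysis contains a genuine gap in the branch where $\mu_g$ is irreducible of degree $2$ and $n \geq 4$. You correctly identify $\CC(g) = \End_\C(\R^n)$, but you then assert that any proper nonzero $\C$-subspace $W$ of $\C^{n/2}$ is exactly $\CC(g)$-invariant, and this is false. A $\C$-subspace is, by definition, a real subspace stable under the $\R[g]$-action (equivalently, under $g$), which is a strictly weaker requirement than stability under the full centralizer $\CC(g) = \End_\C(\C^{n/2})$. Indeed $\End_\C(\C^{n/2})$ acts algebraically irreducibly on $\C^{n/2}$ and preserves no proper nonzero subspace whatsoever: for any proper nonzero $\C$-subspace $W$ one can exhibit a $\C$-linear map of operator norm $1$ sending a unit vector of $W$ to a unit vector orthogonal to $W$. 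So this case, exactly like $n = 2$, has no exactly invariant $W$ and must invoke the hypothesis $\norm{g} > \rho^{-n}$; your proposal supplies no such argument here.

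The fix is already contained in your own final paragraph: the singular value decomposition argument you give for $n = 2$ uses nothing special to dimension two and discharges the whole lemma with no algebraic case split, which is what the paper does (phrased via the Cartan decomposition $g = kal$). Write $g = U\Sigma V^T$ with singular values $\sigma_1 \geq \dotsb \geq \sigma_n > 0$ and right and left singular vectors $(v_i)$, $(u_i)$. Since $\prod_i \sigma_i = 1$ and $\sigma_1 = \norm{g} > \rho^{-n}$, pigeonhole gives an index $p \in \{1,\dotsc,n-1\}$ with $\sigma_{p+1}/\sigma_p < \rho$. Take $W = \Span(u_1,\dotsc,u_p)$. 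For $w \in W \cap \Ball(0,1)$ one has $\norm{\inv{g}w} \leq \sigma_p^{-1}$, and for any $v$ the component of $gv$ orthogonal to $W$ has norm at most $\sigma_{p+1}\norm{v}$; hence for $y \in \CC(g) \cap \Ball(0,1)$, writing $yw = g(y\inv{g}w)$ gives $d(yw,W) \leq \sigma_{p+1}\norm{y\inv{g}w} \leq \sigma_{p+1}/\sigma_p < \rho$.
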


\begin{proof}
Let $g = kal$ be the Cartan decomposition of $g$, with $k,l \in \SO(n)$ and $a = \diag(a_1,\dotsc,a_n)$ where its singular values $a_1,\dotsc, a_n$ are arranged so that $a_1 \geq a_2 \geq \dots \geq a_n > 0$. Assume that $a_1 = \norm{g} > \rho^{-n}$. Since $a_1 \dotsm a_n = \det(g) = 1$, there is $p \in \ensA{n-1}$ such that 
\[\frac{a_{p+1}}{a_p} < \rho.\]
Put $W = k\Span(e_1,\dotsc,e_p)$. It is a nonzero proper linear subspace of $\R^n$. We claim that for all $x \in \CC(g)\cap \Ball(0,1)$ and all $w \in W \cap \Ball(0,1)$, $d(xw,W) < \rho$. 

Indeed, decomposing every vector $v \in \R^n$ as $v = v' + v''$ with $v' \in \inv{l}\Span(e_1,\dotsc,e_p)$ and $v'' \in \inv{l}\Span(e_{p+1},\dotsc e_n)$, we see that
\[d(gv,W) = \norm{av''} \leq a_{p+1} \norm{v}.\]
Moreover, for all $w \in W$, 
\[\norm{\inv{g}w} \leq a_p^{-1}\norm{w}.\]
 
Consequently, for all $x \in \CC(g) \cap \Ball(0,1)$ and all $w \in W \cap \Ball(0,1)$, we have $xw = gx\inv{g}w$, and thus
\[d(xw,W) \leq a_{p+1}\norm{x\inv{g}w} \leq a_{p+1} \norm{\inv{g}w} \leq \frac{a_{p+1}}{a_p} < \rho. \qedhere\]
\end{proof}

\begin{lemm}\label{lm:realConj}
Let $0 < \rho \leq \Inv{2}$ be a parameter. If two real matrices $x$ and $y \in \Mat_n(\R)$ are conjugate by a complex matrix $g \in \GL_n(\C)$ with $\norm{g} + \norm{\inv{g}} < \inv{\rho}$ then there is a real matrix $h \in \GL_n(\R)$ which also conjugates them and moreover $\norm{h} + \norm{\inv{h}} \leq \rho^{-O(1)}$. 
\end{lemm}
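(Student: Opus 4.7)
The plan is to make the classical ``complex conjugate implies real conjugate'' argument quantitative. First I would decompose $g = g_1 + i g_2$ with $g_1, g_2 \in \Mat_n(\R)$, so that $\norm{g_1}, \norm{g_2} \leq \norm{g} < \rho^{-1}$. Taking real and imaginary parts of the identity $gx = yg$ and using that $x$ and $y$ are real yields $g_1 x = y g_1$ and $g_2 x = y g_2$. Consequently, for every $t \in \R$ the real matrix $h_t := g_1 + t g_2$ intertwines $x$ and $y$. The problem reduces to finding a real parameter $t$ of bounded modulus for which $h_t$ is invertible and $\norm{h_t^{-1}}$ is controlled by a polynomial in $\rho^{-1}$.

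To choose $t$ I would consider the polynomial $P(t) = \det(h_t) = \det(g_1 + t g_2)$, which has degree at most $n$. By hypothesis $P(i) = \det(g) \neq 0$; more precisely, using the standard singular-value bound $\abs{\det(A)} \leq \norm{A}^n$ on the matrix $g^{-1}$, one gets
\[ \abs{P(i)} = \abs{\det(g)} = \frac{1}{\abs{\det(g^{-1})}} \geq \norm{g^{-1}}^{-n} \geq \rho^n. \]
The task is to transfer this lower bound from $i$ to a real point. I would fix any $n+1$ real sample points $t_0,\dotsc,t_n \in [-1,1]$ (for instance equally spaced) and apply Lagrange interpolation at $z = i$:
\[ P(i) = \sum_{j=0}^n P(t_j) \prod_{k\neq j} \frac{i - t_k}{t_j - t_k}. \]
The Lagrange coefficients depend only on $n$, so by the triangle inequality some $t = t_{j_*} \in [-1,1]$ satisfies $\abs{P(t)} \gg_n \rho^n$.

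Finally I would set $h := h_t = g_1 + t g_2$. The norm bound is immediate: $\norm{h} \leq \norm{g_1} + \abs{t}\norm{g_2} \leq 2\rho^{-1}$. For the inverse, the adjugate formula $h^{-1} = \operatorname{adj}(h)/\det(h)$ together with the entrywise bound $\norm{\operatorname{adj}(h)} \ll_n \norm{h}^{n-1}$ gives
\[ \norm{h^{-1}} \ll_n \frac{\norm{h}^{n-1}}{\abs{\det(h)}} \ll_n \frac{\rho^{-(n-1)}}{\rho^n} = \rho^{-O(1)}. \]
Hence $\norm{h} + \norm{h^{-1}} \leq \rho^{-O(1)}$, and by construction $h x h^{-1} = y$, as required.

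Most of this is routine. The one quantitative ingredient that needs genuine care is transferring the lower bound $\abs{P(i)} \geq \rho^n$ to a real value of $t$, which is precisely what the Lagrange interpolation step accomplishes. No more delicate tool (such as the \L{}ojasiewicz inequality used elsewhere in the paper) is needed here, since $P$ is an honest polynomial whose degree and sample points are of our choosing.
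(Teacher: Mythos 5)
Your proof is correct and follows the same route as the paper: decompose $g$ into real and imaginary parts, observe that each intertwines $x$ and $y$, and pass to a real specialization of $h_\lambda = g_\Re + \lambda g_\Im$ by bounding the degree-$n$ polynomial $P(\lambda) = \det(h_\lambda)$ away from zero at a real point. Your Lagrange interpolation step is just an explicit version of the paper's assertion that the coefficients of $P$ (and hence $|P(i)|$) are controlled by $\max_{\lambda\in[0,1]}|P(\lambda)|$, so the two arguments are the same in substance.
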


\begin{proof}
Assume that $gx\inv{g} = y$ with $x, y \in \Mat_n(\R)$ and $g \in \GL_n(\C)$. We write $g = g_{\Re} + ig_{\Im}$ with $g_\Re, g_\Im \in \Mat_n(\R)$ its real and imaginary part. From $gx = yg$ we see that $g_\Re x= yg_\Re$ and $g_\Im x= yg_\Im$. For $\lambda \in \C$, consider $h_\lambda = g_{\Re} + \lambda g_{\Im}$. For all $\lambda \in \C$, we have $h_\lambda x = y h_\lambda$. Hence whenever $\det(h_\lambda) \neq 0$, $h_\lambda$ conjugates $x$ and $y$. What remains to do is to find appropriate $\lambda \in \R$ such that $h_\lambda$ and $h_\lambda^{-1}$ have bounded norms.

Define $P(\lambda) = \det(h_\lambda)$. It is a polynomial with real coefficients and its degree is at most $n$. We know that 
\[\abs{P(i)} = \abs{\det(g)} = \abs{\det(\inv{g})}^{-1} \gg \norm{g^{-1}}^{O(1)} \gg \rho^{O(1)}.\]
It is easy to see that the coefficients of $P$ are controlled by $\max_{\lambda \in [0,1]} \abs{P(\lambda)}$. Hence
\[\abs{P(i)} \ll \max_{\lambda \in [0,1]} \abs{P(\lambda)}.\]
And so there is $\lambda_0 \in [0,1]$ such that $P(\lambda_0) \gg \rho^{O(1)}$. Take $h = h_{\lambda_0}$. we have
\[\norm{h} \leq \norm{g_\Re} + \abs{\lambda_0}\norm{g_\Im} \leq 2\norm{g} \ll \inv\rho.\]
and $\det(h) \gg \rho^{O(1)}$, which implies
\[\norm{\inv{h}} \leq \norm{h}^{n-1}\det(h)^{-1} \ll \rho^{-O(1)}.\]
Here the first inequality can be seen from the Cartan decomposition of $h$. 
\end{proof}

\begin{proof}[Proof of Proposition~\ref{pr:Wedder}]
Consider $K = \CC(E)$ the centralizer of $E$ in $\Mat_n(\R)$. From (the proof of) Wedderburn's theorem, $K$ is a division algebra over $\R$ and $E$ is equal to $\CC(K)$, the centralizer of $K$. By the Frobenius theorem, the real division algebra $K$ is isomorphic to $\R$, $\C$ or $\HH$. The action of $K$ on $\R^n$ makes $\R^n$ a $K$ linear space. Hence $n$ is even if $K \simeq \C$ and a multiple of $4$ if $K \simeq \HH$. 

If $K \simeq \R$ then $E = \Mat_n(\R)$, we are done. If $K \simeq \C$ or respectively if $K \simeq \HH$, then $E$ is isomorphic to $\Mat_{\frac{n}{2}}(\C)$ or respectively to $\Mat_{\frac{n}{4}}(\HH)$. The Skolem-Noether theorem (see \cite[\S 12.6]{Pierce}) tells us all embeddings of $\Mat_{\frac{n}{2}}(\C)$ or $\Mat_{\frac{n}{4}}(\HH)$ in $\Mat_n(\R)$ are conjugate (by a matrix in $\GL(\R^n)$). We are going to show that under our quantitative irreducibility assumption, the change of basis matrix can be nicely chosen. 

From the discussion above, we see that an embedding of $\Mat_{\frac{n}{2}}(\C)$ is uniquely determined by an embedding of $\C$ in $\Mat_n(\R)$. Moreover, if $K$ is conjugate to a fixed embedding of $\C$, then $E$ is conjugate to a fixed embedding of $\Mat_{\frac{n}{2}}(\C)$ by the same change of basis matrix. The same applies to the quaternion case. That is why we are going to study embeddings of $\C$ and of $\HH$. Let $\varphi_0 \colon \C \to \Mat_n(\R)$ denote the embedding of $\C$ such that $\CC(\varphi_0(\C)) = \Mat_{\frac{n}{2}}(\C)_\R$ and $\psi_0 \colon \HH \to \Mat_n(\R)$ the embedding of $\HH$ such that $\CC(\psi_0(\HH)) = \Mat_{\frac{n}{4}}(\HH)_\R$.

When $K$ is isomorphic to $\C$, denote by $\varphi \colon \C \to \Mat_n(\R)$ the embedding of $\C$ whose image is $K$. The homomorphism $\varphi$ is uniquely determined by $\varphi(i)$ and we have $E = \CC(\varphi(i))$. From Lemma~\ref{lm:CentFix} and the irreducibility assumptions on $E$, we have $\norm{\varphi(i)} \ll \rho^{-O(1)}$. The matrix $\varphi(i)$ satisfies $\varphi(i)^2 = - \Id_n$, which implies that $\varphi(i)$ is diagonalizable over $\C$ and its eigenvalues can only be $i$ or $-i$. Its trace is real. Therefore the multiplicities of the two eigenvalues must be equal. We conclude that $\varphi(i)$ is conjugate (over $\C$) to the diagonal matrix $\diag(i\Id_{\frac{n}{2}},-i\Id_{\frac{n}{2}})$. By Lemma~\ref{lm:effDiag}, $\varphi(i)$ is conjugate to this matrix by a change of basis matrix $g \in \GL_n(\C)$ satisfying $\norm{g} + \norm{\inv{g}} \ll \rho^{-O(1)}$. The same is true for $\varphi_0(i)$. We conclude that $\varphi(i)$ is conjugate to $\varphi_0(i)$ by a change of basis matrix $g' \in \GL_n(\C)$ with $\norm{g'} + \norm{\inv{g'}} \ll \rho^{-O(1)}$. Finally, thanks to Lemma~\ref{lm:realConj}, $g'$ can be chosen to be real. This finishes the proof for the case where $K \simeq \C$.

When $K$ is isomorphic to $\HH$, denote by $\psi \colon \HH \to \Mat_n(\R)$ the embedding of $\HH$ whose image is $K$. The homomorphism $\psi \colon \HH \to \Mat_n(\R)$ is uniquely determined by $\psi(i)$ and $\psi(j)$ and $E = \CC(\psi(i)) \cap \CC(\psi(j))$. As in the case where $K\simeq \C$, Lemma~\ref{lm:CentFix} gives the estimates $\norm{\varphi(i)},\norm{\varphi(j)}  \ll \rho^{-O(1)}$. 

The two matrices $\psi(i)$ and $\psi(j)$ satisfy 
\begin{equation}\label{eq:IetJ}
\psi(i)^2 = \psi(j)^2 = -\Id_n \text{ and }\psi(i)\psi(j) = -\psi(j)\psi(i).
\end{equation} 
Repeating the argument in the complex case, we see that $\psi(i)$ is conjugate to $\diag(i\Id_{\frac{n}{2}},-i\Id_{\frac{n}{2}})$ by a change of basis matrix satisfying the desired norm estimate. Write $\psi(i)$ and $\psi(j)$ in this new basis,  
\[ \psi(i) = \left[
\begin{array}{c|c}
i\Id_{\frac{n}{2}}  & 0 \\ \hline
 0 & -i\Id_{\frac{n}{2}}
\end{array}\right]\quad \text{and} \quad \psi(j) = \left[
\begin{array}{c|c}
A  & B \\ \hline
C & D
\end{array}\right].\]
The condition \eqref{eq:IetJ} is then equivalent to $A = D = 0$ and $BC = CB = -\Id_{\frac{n}{2}}$. It is easy to check that the conjugation by the block diagonal matrix $g' = \diag(C,\Id_{\frac{n}{2}})$ preserve $\psi(i)$ and conjugates $\psi(j)$ to 
\[\left[
\begin{array}{c|c}
 0 & -\Id_{\frac{n}{2}} \\ \hline
\Id_{\frac{n}{2}} & 0 
\end{array}\right].\]
Note that $\norm{g'} + \norm{g'^{-1}} = \norm{C} + \norm{B} \ll \norm{\phi(j)} \ll \rho^{-O(1)}$.  To conclude the proof we use Lemma~\ref{lm:realConj} to make sure the change of basis matrix is real.
\end{proof}

\section{Sum-product estimate in simple algebras}\label{sc:SumProduct}
We prove Theorem~\ref{thm:SumProduct} and Theorem~\ref{thm:SumProduct2} in this section.

\subsection{Sum-product theorem in \texorpdfstring{$\C^n$}{C^n}}
We will use Bourgain-Gamburd's sum-product theorem in $\C^n$ (Theorem~\ref{thm:etaDelta}) in a slightly stronger form.
\begin{coro} \label{coro:etaDelta}
Given $\kappa > 0$ and $n \geq 1$, there is $\alpha \geq 0$, $\beta > 0$ and a positive integer $s \geq 1$ such that, for $\epsilon > 0$ sufficiently small and $\delta > 0$ sufficiently small, the following holds. Let $A$ be a subset of $\Mat_n(\C)$. Assume that
\begin{enumerate}
\item $A \subset \Ball(0,\delta^{-\epsilon})$,
\item $\Ncov(A) \geq \delta^{-\kappa}$,
\item \label{it:AonDiag} $A \subset \Delta + \Ball(0,\delta^{1-\epsilon})$.
\end{enumerate}
Then there exists $\eta$ in the algebra generated by $A$ such that $\norm{\eta} = 1$ and
\begin{equation}\label{eq:etaDelta}
[0,\delta^\alpha] \eta \subset \sg{A}_s + \Ball(0,\delta^{\alpha + \beta}).
\end{equation}
\end{coro}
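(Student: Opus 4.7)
The plan is to deduce Corollary~\ref{coro:etaDelta} from Theorem~\ref{thm:etaDelta} by absorbing three relaxations: (a) the norm bound $\Ball(0,\delta^{-\epsilon})$ in place of $\Ball(0,1)$, (b) the thicker neighborhood $\Delta+\Ball(0,\delta^{1-\epsilon})$ in place of $\Delta+\Ball(0,\delta)$, and (c) the requirement that $\eta$ lie in the algebra generated by $A$ rather than merely in $\Delta$.

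First I would rescale: set $A':=\delta^\epsilon A$, so $A'\subset\Ball(0,1)$ and $A'\subset\delta^\epsilon(\Delta+\Ball(0,\delta^{1-\epsilon}))=\Delta+\Ball(0,\delta)$, restoring hypotheses (i) and (iii) of Theorem~\ref{thm:etaDelta} at scale $\delta$. For the non-concentration hypothesis, since $A$ lies in a $\delta^{1-\epsilon}$-neighborhood of the $2n$-real-dimensional subspace $\Delta$, a standard volumetric estimate gives $\Ncov[\delta](A)\leq\delta^{-O(\epsilon)}\Ncov[\delta^{1-\epsilon}](A)$, whence $\Ncov[\delta](A')=\Ncov[\delta^{1-\epsilon}](A)\geq\delta^{-\kappa/2}$ provided $\epsilon$ is small enough in terms of $\kappa$ and $n$. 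Applying Theorem~\ref{thm:etaDelta} to $A'$ with parameter $\kappa/2$ then produces $\eta_0\in\Delta$ with $\norm{\eta_0}=1$, exponents $\alpha_0\in[0,C(n,\kappa/2))$ and $\beta_0>c(n,\kappa/2)>0$, and an integer $s_0\geq 1$ depending only on $n$ and $\kappa$, such that
\[
[0,\delta^{\alpha_0}]\eta_0\subset\sg{A'}_{s_0}+\Ball(0,\delta^{\alpha_0+\beta_0}).
\]

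Second, I would transfer this inclusion from $\sg{A'}_{s_0}$ to $\sg{A}_s$. Every element of $\sg{A'}_{s_0}$ is a noncommutative polynomial of complexity at most $s_0$ in $\delta^\epsilon A$; substituting back $\delta^\epsilon a$ at each generator and grouping by degree in $A$, it takes the form $\sum_{k=1}^{s_0}\delta^{k\epsilon}z_k$ with each $z_k\in\sg{A}_{s_1}$ for some $s_1=s_1(s_0)$. Because the factor $\delta^{k\epsilon}$ depends on the degree, $\sg{A'}_{s_0}$ is not a uniform rescaling of $\sg{A}_{s_0}$; however, by choosing $\epsilon$ small enough compared to $\beta_0/s_0$, the multiplicative discrepancies between the various $\delta^{k\epsilon}$ can be absorbed into an enlarged error ball of radius $\delta^{\beta_0/2}$. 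A pigeonhole on the dominant degree $k_0\in\{1,\dots,s_0\}$ then yields an inclusion
\[
[0,\delta^{\alpha_0+k_0\epsilon}]\eta_0\subset\sg{A}_{s_1}+\Ball(0,\delta^{\alpha_0+k_0\epsilon+\beta_0/3}).
\]

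Third, I would promote $\eta_0$ to an element of the algebra generated by $A$. Setting $\alpha:=\alpha_0+k_0\epsilon$ and $\beta:=\beta_0/3$, pick $x\in\sg{A}_{s_1}\subset\Alg(A)$ with $\norm{x-\delta^\alpha\eta_0}\leq\delta^{\alpha+\beta}$. Then $\norm{x}$ lies within a $1+O(\delta^\beta)$ factor of $\delta^\alpha$, so $\eta:=x/\norm{x}$ belongs to the algebra generated by $A$, has $\norm{\eta}=1$, and satisfies $\norm{\eta-\eta_0}=O(\delta^\beta)$. For every $t\in[0,\delta^\alpha]$, the perturbation $\norm{t\eta-t\eta_0}\leq\delta^\alpha\cdot\norm{\eta-\eta_0}=O(\delta^{\alpha+\beta})$ is comparable to the error tolerance, so the inclusion for $\eta_0$ transfers to one for $\eta$ at the cost of a slightly smaller final $\beta$.

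The main technical obstacle is the second step: reconciling $\sg{A'}_{s_0}$ with $\sg{A}_s$ despite the degree-dependent rescaling factor $\delta^{k\epsilon}$. Taking $\epsilon$ sufficiently small relative to $\beta_0$ and $s_0$, so that the variation in $\delta^{k\epsilon}$ across $k\in\{1,\dots,s_0\}$ is controlled by the error tolerance inherited from Theorem~\ref{thm:etaDelta}, is what makes this reconciliation possible.
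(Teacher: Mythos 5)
The proposal misses the central point of this corollary. In Theorem~\ref{thm:etaDelta} the exponents $\alpha$ and $\beta$ are only \emph{bounded} by constants $C(n,\kappa)$ and $c(n,\kappa)$: they are allowed to depend on $A$, and in fact $\alpha$ does. The corollary asserts — note the quantifier order — that a single triple $(\alpha,\beta,s)$, chosen before $A$, works for all admissible $A$. (The paper says so explicitly just after the statement: ``The corollary mainly says that the segment length $\delta^\alpha$ and the new scale $\delta^{\alpha+\beta}$ in Theorem~\ref{thm:etaDelta} can be chosen independently of $A$.'') Your argument applies Theorem~\ref{thm:etaDelta}, obtains $\alpha_0\in[0,C)$ and $\beta_0>c$ ``depending only on $n$ and $\kappa$'' — which is not what the theorem gives — and at the end sets $\alpha:=\alpha_0+k_0\epsilon$, $\beta:=\beta_0/3$. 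Both of these still depend on $A$, so you have proved nothing beyond Theorem~\ref{thm:etaDelta} itself up to cosmetic changes. Making $\beta$ uniform is indeed easy (shrinking $\beta$ only weakens the conclusion, so one may fix $\beta:=c(n,\kappa)$), and the paper dismisses it in one sentence. Making $\alpha$ uniform is the real content: the paper introduces, for each $s$, the set $\Omega_s$ of exponents $\omega$ for which a segment of length $\delta^\omega$ and thickness $\delta^{\omega+\beta}$ can be found in $\sg{A}_s$, shows that multiplying such a segment by an element of $\sg{A}_{s_0}$ that lies $\delta^{\alpha_0+\beta}$-close to $\delta^\gamma\eta$ with $\gamma\in[\alpha_0,\alpha_0+\tfrac12\beta]$ translates the whole interval $[\omega+\alpha_0,\omega+\alpha_0+\tfrac12\beta]$ into $\Omega_{s'}$, and then iterates to produce, after at most $K=K(n,\kappa)$ steps, an interval of achievable exponents long enough that a fixed $\alpha$ falls into it regardless of the unknown $\alpha_0\in[\beta,C]$. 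This iterative ``exponent-covering'' argument is entirely absent from your proposal, and the corollary does not hold without it.

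A secondary issue: the rescaling $A':=\delta^\epsilon A$ you use in Step 1 does not interact well with $\sg{\cdot}_s$, since a term of degree $k$ in $A$ picks up a factor $\delta^{k\epsilon}$ and a general element of $\sg{A'}_{s_0}$ is a sum of terms of mixed degrees, not a uniform rescale of an element of $\sg{A}_{s_1}$. Your pigeonhole on a ``dominant degree $k_0$'' does not produce a coherent inclusion of the whole segment, because the dominant degree can vary with the parameter $t\in[0,\delta^{\alpha_0}]$. The paper sidesteps this entirely by not rescaling: it partitions $A$ into $\delta^{-O(\epsilon)}$ pieces of diameter $1$, selects the piece $A_0$ with the largest covering number, and replaces $A$ by $A_0-A_0\subset\Ball(0,1)\cap(A-A)$, which sits cleanly inside $\sg{A}_{O(1)}$. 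That is the reduction you should use here.
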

The corollary mainly says that the segment length $\delta^\alpha$ and the new scale $\delta^{\alpha+\beta}$ in Theorem~\ref{thm:etaDelta} can be chosen independently of $A$. This will be very useful when we use this diagonal case. Recall that $\Delta$ denotes the set of diagonal matrices in $\Mat_n(\C)$. In the following proof, $\pi_i(a)$ denotes the $i$-th diagonal entry of $a$ for $i \in \ensA{n}$ and $a \in \Mat_n(\C)$.

\begin{proof}
In this proof, let $\pi_i(a)$ denote the $i$-th diagonal entry of $a$ for any $i = 1, \dotsc, n$ and any $a \in \Mat_n(\C)$.
Partitioning $A$ into at most $\delta^{-O_n(\epsilon)}$ parts of diameter $1$ and choosing the part with the largest $\delta$\dash{}covering number, we see that $\Ncov(\Ball(0,1)\cap (A - A)) \geq \delta^{-\kappa + O_n(\epsilon)}$. Thus we can assume that $A \subset \Ball(0,1)$. By working at scale $\delta^{1-\epsilon}$, we can further assume $A \subset \Delta + \Ball(0,\delta)$. Theorem~\ref{thm:etaDelta} says that \eqref{eq:etaDelta} is true for some $\alpha$ and $\beta$ which may depend on $A$. Nevertheless, they can be bounded by constants depending only on $n$ and $\kappa$. What we need to show is that they can actually be chosen independently of $A$. This is evident for $\beta$ since \eqref{eq:etaDelta} gets only weaker when $\beta$ becomes smaller. Hence there exist $0 \leq \alpha_0 < C = C(n,\kappa)$, $s_0 = s_0(n,\kappa)\geq 1$ and $\eta \in \Delta$ such that $\norm{\eta} = 1$ and
\begin{equation}\label{eq:etaDelta0}
[0,\delta^{\alpha_0}] \eta \subset \sg{A}_{s_0} + \Ball(0,\delta^{\alpha_0 + \beta}).
\end{equation}
By replacing $\alpha_0$ with $\alpha_0 + \frac{1}{2}\beta$ and $\beta$ with $\frac{1}{2}\beta$ we can assume that $\alpha_0 \geq \beta$.
Fix an index $i$ such that $\abs{\pi_i(\eta)} \gg_n 1$.
Consider for any $s\geq 1$, the set
\begin{multline*}
\Omega_s = \bigl\{\omega \in \R_+ \mid \exists \xi \in \sg{A}_s,\, \norm{\xi} \leq s\delta^\omega,\, d(\xi,\Delta) \leq s\delta^{\omega + \frac{1}{2}\beta},\\
\abs{\pi_i(\xi)} \geq \frac{1}{s}\delta^\omega \text{ and } [0,1]\xi \subset \sg{A}_s + \Ball(0,s\delta^{\omega+\beta})\bigr\}.
\end{multline*}
We need to prove existence of $\alpha > 0$ and $s \geq 1$ depending only on $n$ and $\kappa$ such that $\alpha \in \Omega_s$. 

It follows from \eqref{eq:etaDelta0} that $\alpha_0 \in \Omega_{s_0}$. Now we show that if $\omega \in \Omega_s$ for some $s \geq 1$ then there is $s'=s'(s,n,\kappa) \geq 1$ such that $[\omega + \alpha_0, \omega + \alpha_0 + \frac{1}{2}\beta] \subset \Omega_{s'}$. Indeed, for any $\gamma \in [\alpha_0,\alpha_0 + \frac{1}{2}\beta]$, by \eqref{eq:etaDelta0}, there exists $a \in \sg{A}_{s_0}$ such that $\delta^\gamma \eta \in a + \Ball(0,\delta^{\alpha_0 + \beta})$. Thus $d(a,\Delta) \leq \delta^{\alpha_0 + \beta} \leq \delta^{\gamma + \frac{1}{2}\beta}$ and
\[\delta^\gamma \ll_n \pi_i(a) \leq \norm{a} \ll \delta^\gamma.\]
By multiplying $a$ to the relation $[0,1]\xi \subset \sg{A}_s + \Ball(0,s\delta^{\omega+\beta})$ we obtain
\[[0,1]\xi a \subset \sg{A}_{s+s_0} + \Ball(0,s\norm{a}\delta^{\omega+\beta}) \subset \sg{A}_{s+s_0} + \Ball(0,O(s)\delta^{\omega+\gamma+\beta}).\]
Moreover, $\norm{\xi a} \ll_n \norm{\xi} \norm{a} \ll s \delta^{\omega + \gamma}$ and
\[d(\xi a, \Delta) \ll_n d(\xi,\Delta)\norm{a} + \norm{\xi} d(a,\Delta) \ll s\delta^{\omega + \gamma + \frac{1}{2}\beta},\]
and for $\delta > 0$ sufficiently small,
\[\abs{\pi_i(\xi a)} \geq \abs{\pi_i(\xi)}\abs{\pi_i(a)} - O_n(d(\xi,\Delta)d(a,\Delta)) \gg_n \frac{1}{s}\delta^{\omega + \gamma} - O_n(\delta^{\omega + \gamma + \beta})\gg_n \frac{1}{s}\delta^{\omega + \gamma}\]
Hence $\omega + \gamma \in \Omega_{s'}$ for some $s'= s'(s,n,\kappa)$.

A simple induction yields that there exists a sequence $(s_k)_{k\geq 0}$ depending only on $n$ and $\kappa$ such that for all $k \geq 0$,
\[[(k+1)\alpha_0, (k+1)\alpha_0 + \frac{k}{2}\beta] \subset \Omega_{s_k}.\]

Recall that $\alpha_0$ depends on $A$ but it is bounded by $\beta \leq \alpha_0 \leq C$ where $\beta$ and $C$ are constants given by Theorem~\ref{thm:etaDelta} and depend only on $n$ and $\kappa$. Put $\alpha = \bigl(\ceil{\frac{2C}{\beta}} + 1\bigr)C$ and $K = \ceil{\frac{2\alpha}{\beta}}$. For any choice of $\alpha_0 \in [\beta,C]$, the equation
\[(k+1)\alpha_0\leq \alpha \leq (k+1)\alpha_0 + \frac{k}{2}\beta\]
has a solution $k$ satisfying $k \leq K$. It follows that $\alpha \in \Omega_{s_k} \subset \Omega_{s_K}$. This concludes the proof since $\alpha$ and $s_K$ depend only on $n$ and $\kappa$.
\end{proof}

\subsection{Small segment}
The key step in the proof of Theorem~\ref{thm:SumProduct} is to produce a small segment in $\sg{A}_s$.

\begin{prop}\label{pr:smallSeg}
Given a normed finite-dimensional simple algebra $E$ and $\kappa > 0$, there is $s \geq 1$ and $\epsilon > 0$, $\alpha,\beta > 0$ such that the following is true for $\delta > 0$ sufficiently small. Let $A$ be a subset of $E$. Assume that
\begin{enumerate}
\item $A \subset \Ball(0,\delta^{-\epsilon})$,
\item $\Ncov(A) \geq \delta^{-\kappa}$,
\item $A$ is $\delta^\epsilon$-away from subalgebras.
\end{enumerate}
Then there is $\eta \in E$, $\norm{\eta} = 1$ such that
\begin{equation}\label{eq:segmentEta}
[0,\delta^\alpha] \eta \subset \sg{A}_s + \Ball(0,\delta^{\alpha + \beta}).
\end{equation}
\end{prop}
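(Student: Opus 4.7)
The plan is to apply Corollary~\ref{coro:etaDelta} (the Bourgain--Gamburd sum-product theorem for $\C^n$) to a large subset of $A$ which, after a suitable change of basis, is concentrated near the diagonal matrices. The bridge is a regular element $a \in \sg{A}_{s_1}$ produced by escape from subvarieties, together with a dichotomy on the commutator map $\phi(x) = ax - xa$.

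First, embed $E \hookrightarrow \Mat_n(\C)$ via the Wedderburn--Frobenius classification. By Proposition~\ref{pr:escapeV} applied with $\eta = 1_E$, for some $s_0 = s_0(E)$ the set $A^* := \sg{A}_{s_0}$ is $\delta^{O_E(\epsilon)}$-away from linear subspaces in $E$. Applying Lemma~\ref{lm:escapeSV} to the discriminant of the characteristic polynomial (a polynomial on $E$ which is not identically zero, as any $\Mat_n(K)$ contains regular elements) yields $a \in \sg{A}_{s_1}$ whose eigenvalues in $\Mat_n(\C)$ are $\delta^{O_E(\epsilon)}$-separated. By Lemma~\ref{lm:effDiag}, some $g \in \GL_n(\C)$ with $\norm{g} + \norm{g^{-1}} \leq \delta^{-O_E(\epsilon)}$ diagonalizes $a$; the commutator $\phi(x) = ax - xa$ is then $\delta^{-O_E(\epsilon)}$-Lipschitz with $\phi(E) \subset \ker \tr_E$.

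The core of the argument is a dichotomy with $\kappa' = \kappa/(3 \dim E)$. If $\Ncov(\phi(A)) \geq \delta^{\kappa'} \Ncov(A)$ (\emph{case (a)}), then since $\phi(A)$ lies in the trace-zero hyperplane, elements of $B^* = A^* \cdot A^*$ with distinct traces produce $\delta$-disjoint translates $\phi(A) + b$, yielding
\[\Ncov(\sg{A}_{s^*}) \gg_E \Ncov(\phi(A)) \cdot \Ncov(\tr_E(B^*))\]
for some $s^* = s^*(E)$. Lemma~\ref{lm:traceSet} applied to $A^*$ bounds the second factor by $\delta^{O(\epsilon)} \Ncov(A)^{1/\dim E} \geq \delta^{O(\epsilon) - \kappa/\dim E}$, so for $\epsilon$ small we gain a factor of at least $\delta^{-\kappa/O(\dim E)}$; we then replace $A$ by $\sg{A}_{s^*}$ and iterate. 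In case (b) ($\Ncov(\phi(A)) < \delta^{\kappa'} \Ncov(A)$), pigeonhole on $\phi$-fibers supplies some $y$ such that $A' = A \cap \phi^{-1}(\Ball(y, \delta))$ satisfies $\Ncov(A') \geq \delta^{-\kappa' + O_E(\epsilon)}$, so $\phi(A' - A') \subset \Ball(0, 2\delta)$ and the conjugate $g(A' - A')g^{-1}$ lies within $\delta^{1-O_E(\epsilon)}$ of $\Delta$. Corollary~\ref{coro:etaDelta} then supplies universal exponents $\alpha, \beta > 0$ and a unit $\tilde\eta$ in the algebra generated by $g(A' - A')g^{-1} \subset gEg^{-1}$, with $[0, \delta^\alpha]\tilde\eta$ inside $\sg{g(A' - A')g^{-1}}_{s_3} + \Ball(0, \delta^{\alpha + \beta})$. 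Conjugating back by $g^{-1}$ places the segment inside $E$ and inside $\sg{A}_{2s_3}$ up to a $\delta^{\alpha + \beta - O_E(\epsilon)}$ error, absorbed by a small decrease of $\beta$.

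Finally, iteration terminates: since the iterated sets stay in $\Ball(0, \delta^{-O_s(\epsilon)})$, their covering numbers are capped by $\delta^{-O_E(s\epsilon)}$, so case (a) can recur at most $O_E(1)$ times before case (b) must apply, producing the segment. The main technical obstacle is the quantitative bookkeeping across these iterations and through the conjugation by $g$: this is precisely why Corollary~\ref{coro:etaDelta} is needed with $\alpha, \beta$ independent of the input set, since otherwise the exponents would degrade each time the Bourgain--Gamburd input is invoked, and the final segment exponents would not be controllable.
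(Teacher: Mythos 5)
Your proposal follows the paper's argument faithfully in its main structure: embed $E$ in $\Mat_n(\C)$, escape from linear subspaces, find a regular element $a$ via escape from the discriminant subvariety, dichotomize on the size of the commutator image $\phi(A)$ with $\phi(x) = ax - xa$, apply the trace-set lemma to grow and iterate in the expansive case, invoke Corollary~\ref{coro:etaDelta} in the near-diagonal case, and terminate the iteration by a cap on covering numbers. Your explicit tracking of the conjugation by $g$ when pulling the Bourgain--Gamburd segment back from $g E g^{-1}$ into $E$ is correct and in fact more careful than the paper's ``without loss of generality $a$ is diagonal.''

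There is however one step that fails as written. You apply Proposition~\ref{pr:escapeV} with $\eta = 1_E$ and then assert that $A^* = \sg{A}_{s_0}$ is $\delta^{O_E(\epsilon)}$-away from linear subspaces. But the almost-orthonormal basis that Proposition~\ref{pr:escapeV} constructs starts with $\eta_1 = \eta = 1_E$, and $1_E$ is generally \emph{not} an element of $\sg{A}_{s_0}$: the iterated sum--product set is built from $A \cup (-A)$ alone and need not contain the multiplicative identity. So the basis lies in $A_1^{2n}$ (with $A_1 = \{1_E\}\cup A$) but not in $\sg{A}_{s_0}$, and the conclusion you want about $\sg{A}_{s_0}$ does not follow from Lemma~\ref{lm:awayBasis}. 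The paper avoids this by choosing $\eta \in A$ with $\norm{\eta} \geq \delta^{\epsilon}$: such an $\eta$ exists because $A$ is $\delta^\epsilon$-away from proper subalgebras, in particular from $\R \cdot 1_E$ (assuming $\dim E > 1$, which is the only interesting case), so some $a \in A$ satisfies $\norm{a} \geq d(a, \R \cdot 1_E) \geq \delta^\epsilon$. With this choice the basis produced by Proposition~\ref{pr:escapeV} is contained in $A_1^n \eta A_1^n \subset \sg{A}_{2n+1}$, which is what is needed. Everything downstream of this fix in your argument is sound.
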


\begin{proof}[Proof of Proposition~\ref{pr:smallSeg}]
First observe that we can assume without loss of generality that $E$ is a real subalgebra of $\Mat_n(\C)$ for some positive integer $n$ and it contains a least one element with $n$ distinct eigenvalues. Indeed, this is evident if $E$ is isomorphic to $\Mat_n(\C)$ or $\Mat_n(\R)$ since for the latter case we can embed naturally $\Mat_n(\R)$ in $\Mat_n(\C)$. We don't need to worry about the norm because all linear isomorphisms are bi-Lipschitz and bi-Lipschitz maps only change the constants in the assumptions and the conclusion of the proposition. If $E$ is isomorphic to $\Mat_n(\HH)$, then we can embed $\Mat_n(\HH)$ in $\Mat_{2n}(\C)$ by sending each entry $x+iy+jz+kw \in \HH$ to a $2\times 2$ block $\smatrixp{x + iy & z + iw \\ -z + iw & x - iy}$. It is easy to check that this embedding of $E$ contains a diagonal matrix with distinct diagonal entries.

In this proof, $s$ stands for an unspecified positive integer depending on $n$ and $\kappa$ that may increase from one line to another. Since $A$ is $\delta^\epsilon$-away from subalgebras, $\sg{A}_s$ is $\delta^{O(\epsilon)}$-away from linear subspaces by Proposition~\ref{pr:escapeV} applied to any $\eta \in A$ with $\norm{\eta} \geq \delta^{\epsilon}$. Therefore, without loss of generality, we can assume that $A$ is $\delta^\epsilon$-away from linear subspaces in $E$.

Consider $P \colon \Mat_n(\C) \to \C$ defined by
\begin{equation}\label{eq:sepEigen}
P(x) = \prod_{i < j}(\lambda_i -\lambda_j)^2
\end{equation}
where $\lambda_1, \dotsc, \lambda_n$ are eigenvalues of $x \in \Mat_n(\C)$ (the $n$ roots of the characteristic polynomial of $x$). The right-hand side of \eqref{eq:sepEigen} is symmetric in $(\lambda_i)$ and thus polynomial in the coefficients of the characteristic polynomial of $x$. Hence $x \mapsto \abs{P(x)}^2$ is a real polynomial on $\Mat_n(\C)$. Apply Lemma~\ref{lm:escapeSV} to $\abs{P}^2$ restricted to $E$. Since $E$ contains an element with $n$ distinct eigenvalues, we obtain an element $a \in \sg{A}_s$ such that $\abs{P(a)} > \delta^{O(\epsilon)}$ and consequently the eigenvalues $\lambda_1, \dotsc, \lambda_n$ of $a$ satisfy
\[\forall i\neq j, \quad \abs{\lambda_i - \lambda_j} > \delta^{O(\epsilon)}.\]

Now consider the map 
\begin{equation*}
\begin{matrix}
\phi \colon & \Mat_n(\C) &\to &\Mat_n(\C)\\
 &x &\mapsto & ax - xa.
\end{matrix}
\end{equation*}
Let $\kappa' = \frac{\kappa}{9n^2}$. We distinguish two cases according to the size of the image $\phi(A)$. First consider the case where
\[\Ncov\bigl(\phi(A)\bigr) \geq \delta^{2\kappa'} \Ncov(A).\]
In this case we can show a growth estimate. By Lemma~\ref{lm:traceSet}, there is a subset $A'\subset A^2$ such that $\tr_E(A')$ is $\delta$-separated and of size $\abs{\tr_E(A')} \geq \delta^{-4\kappa'}$. Here, we used the fact that $\dim(E) \leq 2n^2$. Observe that $\tr_E(\phi(x)) = 0$ for all $x \in E$, hence
\[\Ncov\bigl(\phi(A) + A'\bigr) \gg \Ncov(\phi(A)) \abs{\tr_E(A')} \geq \delta^{-2\kappa'} \Ncov(A).\]
Consequently,
\begin{equation}\label{eq:ACgrows}
\Ncov\bigl(\sg{A}_s\bigr) \geq \delta^{-\kappa'} \Ncov(A).
\end{equation}

Otherwise, we have
\[\Ncov\bigl(\phi(A)\bigr) < \delta^{2\kappa'} \Ncov(A),\]
then by cutting $A$ into "radius $\delta$" fibers, we see that
\[\Ncov(A) \ll \Ncov\bigl(\phi(A)\bigr) \max_{y} \Ncov\bigl(\phi^{-1}\bigl(\Ball(y,\delta)\bigr)\cap A\bigr).\]
Hence there is $y_* \in \Mat_n(\C)$ such that $\Ncov\bigl(\phi^{-1}\bigl(\Ball(y_*,\delta)\bigr) \cap A\bigr) \geq \delta^{-\kappa'}$. Put $A'' = \phi^{-1}\bigl(\Ball(y_*,\delta)\bigr) \cap A - \phi^{-1}\bigl(\Ball(y_*,\delta)\bigr) \cap A$ so that $\Ncov(A'') \geq \delta^{-\kappa'}$ and for all $x \in A''$, $\norm{\phi(x)} \ll \delta$. 

Recall that $\abs{\lambda_i - \lambda_j} > \delta^{O(\epsilon)}$ for all $i \neq j$. Hence we can apply Lemma~\ref{lm:effDiag} to the matrix $a$. We obtain a change of basis matrix $g \in \GL_n(\C)$ such that $\norm{g} + \norm{\inv{g}} < \delta^{-O(\epsilon)}$ and $ga\inv{g}$ is diagonal. Conjugation by $g$ will change any estimate only by a factor of $\delta^{-O(\epsilon)}$ or $\delta^{O(\epsilon)}$. That's why we can assume without loss of generality that $a = \diag(\lambda_1,\dotsc,\lambda_n)$. Then an explicit computation gives an expression for $\phi$ in the standard basis : if $x = (x_{ij})_{i,j} \in \Mat_n(\C)$, then
\[\phi(x) = \bigl( (\lambda_i -\lambda_j)x_{ij} \bigr)_{i,j}.\]
Therefore if $\norm{\phi(x)} \ll \delta$ then $d(x,\Delta) < \delta^{1 - O(\epsilon)}$. Hence $A'' \subset \Delta + \Ball(0,\delta^{1 - O(\epsilon)})$. Then Corollary~\ref{coro:etaDelta} gives constants $\alpha, \beta > 0$ and integer $s \geq 1$ depending only on $n$ and $\kappa'$ and a unit vector $\eta \in \Mat_n(\C)$ such that \eqref{eq:segmentEta} holds. 

What we have proved is that either the proposition holds or we have \eqref{eq:ACgrows}. If we are in the latter case, we can iterate the same argument to $\sg{A}_s$. After at most $O(\frac{n^2}{\kappa'})$ iterations, \eqref{eq:ACgrows} cannot be possible anymore, hence the proposition must be true.
\end{proof}
 
\subsection{Proof of Theorem~\ref{thm:SumProduct}.}

Once $\sg{A}_s$ contains a segment, it takes only a few more steps to produce a small ball.
\begin{prop}
\label{pr:smallSeg+}
Under the same assumptions, the conclusion of Proposition~\ref{pr:smallSeg} can be improved to
\begin{equation}\label{eq:smallBall}
\Ball(0,\delta^\alpha) \subset \sg{A}_s + \Ball(0,\delta^{\alpha + \beta}).
\end{equation}
\end{prop}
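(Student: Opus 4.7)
The plan is to disperse the one-dimensional segment $[0,\delta^\alpha]\eta$ given by Proposition~\ref{pr:smallSeg} into segments in $n = \dim E$ independent directions forming a $\delta^{O(\epsilon)}$-almost orthonormal basis of $E$, and then sum them to fill out a ball. The simplicity of $E$ enters crucially through Proposition~\ref{pr:escapeV}, which supplies such a basis as products of the form $b_i\,\eta\,c_i$ with $b_i, c_i$ built from elements of $A$.

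More precisely, starting from $[0,\delta^\alpha]\eta \subset \sg{A}_s + \Ball(0,\delta^{\alpha+\beta})$ with $\norm{\eta} = 1$, I would apply Proposition~\ref{pr:escapeV} with $\rho = \delta^\epsilon$ to obtain elements $b_1,c_1,\dotsc,b_n,c_n \in A_1^n$ such that $(b_i\eta c_i)_{i=1}^n$ is a $\delta^{C_0\epsilon}$-almost orthonormal basis of $E$, for some $C_0 = O_E(1)$. For each $i$, multiplying the segment inclusion on the left by $b_i$ and on the right by $c_i$ yields
\[
[0,\delta^\alpha]\,b_i\eta c_i \;\subset\; \sg{A}_{s'} + \Ball\bigl(0,\, \norm{b_i}\,\norm{c_i}\,\delta^{\alpha+\beta}\bigr)
\]
for some $s' = s'(E,\kappa)$. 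Since each factor of $b_i$ and $c_i$ has norm at most $\delta^{-\epsilon}$, the error radius is at most $\delta^{\alpha+\beta-2n\epsilon}$, which stays below $\delta^{\alpha+3\beta/4}$ once $\epsilon$ is chosen small compared to $\beta$. The set $\sg{A}_t$ being symmetric under $x \mapsto -x$, the same inclusion holds with the centered interval $[-\delta^\alpha,\delta^\alpha]\,b_i\eta c_i$ in place of $[0,\delta^\alpha]\,b_i\eta c_i$.

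Summing these $n$ centered segments and invoking Lemma~\ref{lm:rhoOrth} for the $\delta^{C_0\epsilon}$-almost orthonormal basis $(b_i\eta c_i)$, every $x \in \Ball(0,\delta^{\alpha+C\epsilon})$, with $C = O_E(1)$ taken sufficiently large, decomposes as $x = \sum_i x_i\,b_i\eta c_i$ with $\abs{x_i} \leq \delta^\alpha$. Hence
\[
\Ball\bigl(0,\delta^{\alpha+C\epsilon}\bigr) \;\subset\; \sum_{i=1}^n [-\delta^\alpha, \delta^\alpha]\, b_i\eta c_i \;\subset\; \sg{A}_{s''} + \Ball\bigl(0,\, n\,\delta^{\alpha+3\beta/4}\bigr)
\]
for some $s'' = s''(E,\kappa)$. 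Absorbing the factor $\delta^{C\epsilon}$ into a slightly enlarged $\alpha$ and shrinking $\beta$ to, say, $\beta/2$, then taking $\epsilon$ small enough, gives the desired inclusion~\eqref{eq:smallBall}.

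The main delicate point is ensuring that the multiplicative errors (coming from the bounded-norm factors $b_i, c_i$) together with the $n$-fold summation do not swallow the gain at scale $\delta^{\alpha+\beta}$; this is handled by taking $\epsilon$ small relative to $\beta$, which is permissible since both $\beta$ (supplied by Proposition~\ref{pr:smallSeg}) and the final $\epsilon$ are allowed to depend on $E$ and $\kappa$ alone.
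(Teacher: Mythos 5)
Your proposal is correct and follows essentially the same route as the paper: apply Proposition~\ref{pr:escapeV} to obtain a $\delta^{O(\epsilon)}$-almost orthonormal basis of $E$ of the form $(b_i\eta c_i)_i$ with $b_i,c_i$ built from $A$, multiply the segment inclusion on both sides to disperse it into these $n$ directions, and sum using Lemma~\ref{lm:rhoOrth}, absorbing the $\delta^{-O(\epsilon)}$ losses by slightly worsening $\alpha$, $\beta$, $s$. The only cosmetic difference is that you spell out the symmetry $\sg{A}_t = -\sg{A}_t$ to pass from $[0,\delta^\alpha]$ to $[-\delta^\alpha,\delta^\alpha]$, which the paper uses silently.
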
 
\begin{proof}
Suppose that $\eta \in E$ is a unit vector satisfying \eqref{eq:segmentEta}. Since $A$ is $\delta^\epsilon$-away from subalgebras, by Proposition~\ref{pr:escapeV}, there is a $\delta^{O(\epsilon)}$-almost orthonormal basis of $E$ of the form $(a_i\eta b_i)_i$ with $a_i,b_i \in \sg{A}_{\dim(E)}$.
Then \eqref{eq:segmentEta} implies that for all $i = 1 ,\dotsc, \dim(E)$,
\[ [0,\delta^\alpha] a_i\eta b_i \subset \sg{A}_{s + 2\dim(E)} + \Ball(0,\delta^{\alpha + \beta - O(\epsilon)}).\]
Moreover, Lemma~\ref{lm:rhoOrth} yields
\[\Ball(0, \delta^{\alpha + O(\epsilon)}) \subset \sum_i [-\delta^\alpha,\delta^\alpha] a_i\eta b_i.\]
Hence \eqref{eq:smallBall} holds for sufficiently small $\epsilon$ and slightly worse $\alpha$, $\beta$ and $s$. 
\end{proof}

\begin{proof}[Proof of Theorem~\ref{thm:SumProduct}]
The idea is to apply Proposition~\ref{pr:smallSeg+} at various scales ranging from $\delta$ to $\delta^\epsilon$. Let $\epsilon_1,\alpha,\beta$ and $s$ be the constants given by Proposition~\ref{pr:smallSeg+}. Let $r = \ceil{\frac{\ln(\epsilon_0)}{\ln(\alpha) - \ln(\alpha + \beta)}}$ and for $k = 0,\dotsc, r$, define $\delta_k = \delta^{\frac{1}{\alpha}(\frac{\alpha}{\alpha + \beta})^k}$ so that $\delta_0^\alpha = \delta$, $\delta^{\epsilon_0} \leq \delta_r^\alpha$ and $\delta_k^{\alpha + \beta} = \delta_{k-1}^\alpha$ for all $k = 1, \dotsc, r$. 

We can require $\epsilon < \Inv{\alpha} (\frac{\alpha}{\alpha + \beta})^r \epsilon_1$ so that the assumptions of Proposition~\ref{pr:smallSeg} are satisfied at all the scales $\delta_k$, $k = 1, \dotsc, r$. Thus
\[\Ball(0,\delta_k^\alpha) \subset \sg{A}_s + \Ball(0,\delta_{k-1}^\alpha).\]
Hence,
\begin{align*}
\Ball(0, \delta^{\epsilon_0}) & \subset \Ball(0,\delta_r^\alpha) \\
& \subset \sg{A}_s + \Ball(0,\delta_{r-1}^\alpha) \\
& \subset \sg{A}_s + \sg{A}_s + \Ball(0,\delta_{r-2}^\alpha) \\
& \dotsb \\
& \subset \sg{A}_s + \dotsb + \sg{A}_s + \Ball(0,\delta_0^\alpha)
\end{align*}
Hence, $\Ball(0,\delta^{\epsilon_0}) \subset \sg{A}_{rs} + \Ball(0,\delta)$.
\end{proof}

\subsection{Proof of Theorem~\ref{thm:SumProduct2}}
We deduce Theorem~\ref{thm:SumProduct2} from Theorem~\ref{thm:SumProduct} and Lemma~\ref{lm:AplusAA}.
\begin{proof}[Proof of Theorem~\ref{thm:SumProduct2}]
Suppose that $A \subset E$ is a set satisfying the assumptions of Theorem~\ref{thm:SumProduct2} but 
\[\Ncov(A+A) + \Ncov(A + A\cdot A) \leq \delta^{-\epsilon}\Ncov(A).\]

On the one hand, applying Theorem~\ref{thm:SumProduct} with $\epsilon_0 = \frac{\dim(E) - \sigma}{2\dim(E)}$, we obtain an integer $s\geq 1$ such that
\[\Ball(0,\delta^{\epsilon_0}) \subset \sg{A}_s + \Ball(0,\delta).\]
Hence
\begin{equation}\label{eq:sgAbig}
\delta^{-\Inv{2}(\dim(E) + \sigma)} = \delta^{-(1-\epsilon_0)\dim(E)} \ll_E \Ncov(\sg{A}_s).
\end{equation}

On the other hand, by Lemma~\ref{lm:AplusAA},
\begin{equation}\label{eq:sgAsmall}
\Ncov(\sg{A}_s) \ll_E \delta^{-O_s(\epsilon)}\Ncov(A) \ll \delta^{-\sigma-O_s(\epsilon)}.
\end{equation}
The inequalities \eqref{eq:sgAbig} and \eqref{eq:sgAsmall} lead to a contradiction when $\epsilon$ is sufficiently small. 
\end{proof}

\begin{rmq}
Conversely, a growth statement like Theorem~\ref{thm:SumProduct2} always implies a statement like Theorem~\ref{thm:SumProduct}. The idea is to use the growth statement repeatedly until the set is nearly "full-dimensional" and then Fourier analysis shows that within a few more steps, it grows to "full dimension". For instance, \cite[Theorem 6]{Bourgain2010} deals with the one-dimensional case.
\end{rmq}

\section{Growth under linear action}\label{sc:ActionRn}
We prove Theorem~\ref{thm:ActionRn} in this section.

\subsection{Acting on \texorpdfstring{$\R^n$}{R^n}, probabilistic method}
We endow $\End(\R^n)$ with its usual operator norm. Recall that we denote by $\Mat_{\frac{n}{2}}(\C)_\R$ the standard embedding of $\Mat_{\frac{n}{2}}(\C)$ in $\End(\R^n)$ and by $\Mat_{\frac{n}{4}}(\HH)_\R$ that of $\Mat_{\frac{n}{4}}(\HH)$. First we study the special case where the collection of endomorphisms $A$ is the unit ball in $\End(\R^n)$ or one of these two subalgebras. Actually, the lemma below is a direct consequence of \cite[Proposition 1]{BourgainGamburd_SU}. Here, we present an elementary proof of this easier fact. 
\begin{lemm}\label{lm:BnotinS}
Given $\kappa > 0$ and $\sigma < n$, there is $\epsilon > 0$ such that the following holds for $\delta > 0$ sufficiently small. Let $E$ be $\End(\R^n)$ or $\Mat_{\frac{n}{2}}(\C)_\R$ or $\Mat_{\frac{n}{4}}(\HH)_\R$. Let $X$ be a subset of $\R^n$. Assume that
\begin{enumerate}
\item $X \subset \Ball(0,\delta^{-\epsilon})$,
\item $\forall \rho \geq \delta$, $\Ncov[\rho](X) \geq \delta^{\epsilon}\rho^{-\kappa}$,
\item $\Ncov(X) \leq \delta^{-\sigma-\epsilon}$,
\end{enumerate}
then 
\[\max_{f \in \Ball(0,1)\cap E} \Ncov(X + fX) \geq \delta^{-\epsilon} \Ncov(X).\]
\end{lemm}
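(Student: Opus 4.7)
The key structural observation is that for each of the three algebras $E$ under consideration, the unit ball $A := \Ball(0,1) \cap E$ satisfies $Ax = \Ball(0, \|x\|)$ for every $x \in \R^n$, thanks to the abundance of rank-one operators in each of these algebras. Equivalently, for each $v \in \R^n \setminus \{0\}$, the pushforward of normalized Haar measure on $A$ by $f \mapsto fv$ is absolutely continuous, supported on $\Ball(0, \|v\|)$, with density bounded by $C_n\|v\|^{-n}$. I would record this first, together with the consequence $\mu_A\bigl(\{f : \|u + fv\| \leq \delta\}\bigr) \ll \min\{1, (\delta/\|v\|)^n\}$ that feeds the Fubini computation below.

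The argument then proceeds by contradiction: suppose $\Ncov(X + fX) < \delta^{-\epsilon}\Ncov(X)$ for every $f \in A$. Applying the Cauchy--Schwarz estimate of Lemma~\ref{lm:phiEnergy} to $\phi_f(x,x') = x + fx'$ forces the uniform lower bound
\[
E_\delta(f) := \#\bigl\{ (x_1, x_2, x_1', x_2') \in \tilde X^4 : \|(x_1 - x_2) + f(x_1' - x_2')\| \leq \delta \bigr\} \gg \delta^{\epsilon}\Ncov(X)^3,
\]
where $\tilde X$ is a maximal $\delta$-separated subset of $X$. Averaging over $f \in A$ and substituting the density estimate above transforms this into
\[
\delta^{\epsilon}\mu(A)\Ncov(X)^3 \ll \int_A E_\delta(f)\, df \ll \Ncov(X)^2 + \sum_{v \in \tilde X - \tilde X,\, v \neq 0} r(v)\Bigl(\frac{\delta}{\|v\|}\Bigr)^n N(2\|v\|),
\]
with $r(v) := \#\{(x,y) \in \tilde X^2 : x - y \approx v\}$ and $N(R) := \#\{(x,y) \in \tilde X^2 : \|x - y\| \leq R\}$. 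The plan is to dyadically decompose this sum in $R = \|v\|$, splitting at the critical scale $R_* := \delta\Ncov(X)^{1/n}$; the hypothesis $\Ncov(X) \leq \delta^{-\sigma - \epsilon}$ with $\sigma < n$ confines $R_*$ to $R_* \leq \delta^{(n-\sigma)/n - O(\epsilon)}$, strictly smaller than one.

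The main technical obstacle is that the naive volume bound $N(R) \leq \min\{\Ncov(X)^2,\, \Ncov(X)(R/\delta)^n\}$ only produces a logarithmic loss in the dyadic sum, which is too weak by a factor of $\log(1/\delta)$. To circumvent this I would first replace $X$ by a Frostman-regular subset $X_0 \subseteq X$ with $\Ncov(X_0) \geq \delta^{O(\epsilon)}\Ncov(X)$ satisfying $|\tilde X_0 \cap \Ball(y, R)| \ll \delta^{-O(\epsilon)} R^\kappa \Ncov(X_0)$ for every ball $\Ball(y, R)$ with $R \geq \delta$; this regularization is obtained by pigeonholing over dyadic scales and crucially uses the hypothesis that the non-concentration holds throughout the range $[\delta, 1]$. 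With $X_0$ in place of $X$, the refined estimate $N(R) \ll \delta^{-O(\epsilon)}R^\kappa\Ncov(X_0)^2$, together with the confinement of the critical scale $R_*$ strictly inside $[\delta, 1]$, promotes the logarithmic loss into a genuine polynomial gain: the energy integral becomes $\ll \delta^{c}\mu(A)\Ncov(X_0)^3$ for some $c = c(n - \sigma, \kappa) > 0$, contradicting the lower bound once $\epsilon < c/2$. The hardest step, and the one absorbing all the delicate work, is the Frostman regularization: non-concentration is by definition only a lower bound on covering numbers, so the complementary upper bound on local ball densities must be extracted by a multi-scale pigeonhole that exploits the full range of scales simultaneously.
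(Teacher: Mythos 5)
The paper derives this lemma from the more abstract Proposition~\ref{pr:spread}, which uses the same second-moment framework you set up: your pushforward density bound is exactly condition (v) of that proposition with $\tau = n$, and your Cauchy--Schwarz plus averaging over $f$ is the application of Lemma~\ref{lm:phiEnergy} and Jensen. So the architecture of your argument matches the paper's; the problem is a specific step.

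The gap is the Frostman regularization. You assert that from hypothesis (ii) alone (via ``a multi-scale pigeonhole''), one can extract a subset $X_0 \subseteq X$ of comparable size satisfying $\abs{\tilde X_0 \cap \Ball(y,R)} \ll \delta^{-O(\epsilon)} R^\kappa \Ncov(X_0)$ for all $R \geq \delta$. This cannot work: (ii) is only a lower bound on covering numbers, and a pigeonhole over scales cannot produce an upper bound on local density with the exponent $\kappa$. Concretely, take $\kappa < \sigma < n$ and let $X = Y \cup Z$, where $Y$ is a $\delta$-net of a $\kappa$-dimensional Ahlfors-regular subset of $\Ball(0,1)$ (so $\Ncov[\rho](Y) \asymp \rho^{-\kappa}$ for $\rho \in [\delta,1]$) and $Z$ is a $\delta$-net of $\Ball(0,\delta^{1-\sigma/n})$. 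Then $X$ satisfies (i)--(iii) with $\Ncov(X) \asymp \delta^{-\sigma}$ coming from $Z$, but any $X_0 \subseteq X$ with $\Ncov(X_0) \geq \delta^{O(\epsilon)}\Ncov(X)$ is forced to be essentially $Z$ (since $\abs{Y} \approx \delta^{-\kappa}$ is negligible), and your density bound at $R = \delta^{1-\sigma/n}$ then fails by the factor $\delta^{-(1-\sigma/n)\kappa + O(\epsilon)}$. The regularization you describe is therefore impossible under the stated hypotheses.

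What the paper does instead --- and what your proposal is missing --- is to extract the density bound from the contradiction hypothesis itself. Since $\Id \in \Ball(0,1) \cap E$, that hypothesis includes $\Ncov(X+X) \leq \delta^{-\epsilon}\Ncov(X)$. The elementary inequality
\[\Ncov(X+X) \gg \Ncov[\rho](X)\,\max_{w}\Ncov\bigl(X\cap\Ball(w,\rho)\bigr),\]
obtained by translating the heaviest $\rho$-cell by a $4\rho$-separated net of $X$, combined with (ii), immediately yields
\[\max_{w}\Ncov\bigl(X\cap\Ball(w,\rho)\bigr) \leq \delta^{-O(\epsilon)}\rho^{\kappa}\Ncov(X)\]
for all $\rho \geq \delta$, with no regularization and no loss. (In the example above, $Y + Z$ is a fat tube of covering number $\gg \delta^{-c}\Ncov(X)$, so the $X+X$ hypothesis already fails --- the counterexample never troubles this route.) With this density bound in hand, the paper splits the energy sum at a single threshold $\rho = \delta^{(n-\sigma)/(n+\kappa)}$ rather than dyadically, which also sidesteps the logarithmic loss you were concerned about. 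You should replace your regularization step by this two-line derivation.
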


\begin{proof}
Let $\mu$ be the normalized Lebesgue measure on $\Ball(0,1)\cap E$. It is easy to verify that $\mu$ satisfies the assumptions of the following proposition with $\tau = n$. Note that $\Ball(0,1)\cap E$ contains the identity $\Id$.
\end{proof}

\begin{prop}\label{pr:spread}
Given $\kappa > 0$ and $0 < \sigma < \tau$, there is $\epsilon > 0$ such that the following holds for $\delta > 0$ sufficiently small. Let $X$ be a subset of $\R^n$ and $\mu$ a probability measure on $\End(\R^n)$. If
\begin{enumerate}
\item $X \subset \Ball(0,\delta^{-\epsilon})$,
\item $\forall \rho \geq \delta$, $\Ncov[\rho](X) \geq \delta^{\epsilon}\rho^{-\kappa}$,
\item \label{it:Xnotall} $\Ncov(X) \leq \delta^{-\sigma-\epsilon}$,
\item The support of $\mu$, $\Supp(\mu) \subset \Ball(0,\delta^{-\epsilon})$,
\item For all $\rho \geq \delta$ and all $v,w \in \R^n$ with $\norm{v}=1$, 
\[\mu \bigl(\ens{f \in \End(\R^n) \mid fv \in w + \Ball(0, \rho)}\bigr) \leq \delta^{-\epsilon}\rho^\tau,\]
\end{enumerate}
then 
\[\Ncov(X + X) + \max_{f \in \Supp(\mu)}\Ncov(X + fX) \geq \delta^{-\epsilon} \Ncov(X).\]
\end{prop}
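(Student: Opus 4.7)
The plan is to argue by contradiction combined with an energy/Jensen argument. Assume $\Ncov(X+X) \leq \delta^{-\epsilon'} N$ and $\Ncov(X + fX) \leq \delta^{-\epsilon'} N$ for every $f \in \Supp(\mu)$, where $N = \Ncov(X)$, $K = \delta^{-\epsilon'}$, and $\epsilon'$ will eventually be chosen small compared to the $\epsilon$ in the conclusion. By Pl\"unnecke-Ruzsa (Lemma~\ref{lm:RuzsaSum}) we have $\Ncov(X - X) \leq K^{O(1)} N$. Applying Lemma~\ref{lm:phiEnergy} to $\phi_f \colon (x, x') \mapsto x + fx'$ on $X \times X$ yields the pointwise energy lower bound
$$\En_\delta(\phi_f, X \times X) \gg \frac{N^4}{\Ncov(X + fX)} \geq \frac{cN^3}{K}$$
for every $f \in \Supp(\mu)$.

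Integrating against $\mu$ and applying Fubini turns this into
$$\sum_{(x_1, x_1', x_2, x_2') \in \tilde X^4} \mu\bigl\{f : f(x_2 - x_2') \approx x_1' - x_1 \pmod{\delta}\bigr\} \geq \frac{cN^3}{K},$$
where $\tilde X$ is a maximal $\delta$-separated subset of $X$. The diagonal contribution (where $x_2 = x_2'$, forcing $x_1 = x_1'$ by the $\delta$-separation) is only $O(N^2)$. For off-diagonal pairs, set $v = x_2 - x_2'$ (so $\norm{v} \geq \delta$) and apply hypothesis (v) to $v/\norm{v}$ to get $\mu\{f : fv \approx w\} \leq \delta^{-\epsilon}(\delta/\norm{v})^\tau$ whenever $\norm{v} = O(1)$. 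Combined with the trivial per-$f$ bound $\#\{(x_1, x_1') \in \tilde X^2 : x_1' - x_1 \in \Ball(y, O(\delta))\} = O(N)$, this yields a two-regime estimate whose crossover scale is $\rho^* \asymp \delta^{1 - (\sigma + \epsilon)/\tau}$; the strict inequality $\sigma < \tau$ ensures $\rho^* \gg \delta$ so the dyadic splitting is meaningful.

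The hard part is handling pairs with $\norm{v} \leq \rho^*$: the naive local-density bound $\#(\tilde X \cap \Ball(y, \rho)) \leq (\rho/\delta)^n$ yields only an $O(N^3)$ contribution, matching but not strictly beating $cN^3/K$. To gain the decisive factor of $\delta^\epsilon$, I would first pass to a subset $X' \subset X$ with $\abs{X'} \geq \delta^{O(\epsilon)} N$ enjoying a Frostman-type local density upper bound $\#(\tilde X' \cap \Ball(y, \rho)) \leq \delta^{-O(\epsilon)} N/\Ncov[\rho](X')$, obtained by a standard dyadic pigeonhole regularization compatible with hypothesis (ii). Using this regularity together with the Frostman lower bound $\Ncov[\rho](X') \geq \delta^{O(\epsilon)} \rho^{-\kappa}$, both regimes of the dyadic sum contribute at most $\asymp N^3 \delta^{\kappa(\tau - \sigma)/\tau - O(\epsilon)}$, contradicting the lower bound $cN^3/K$ once $\epsilon$ and $\epsilon'$ are chosen small compared to $\kappa(\tau - \sigma)/\tau$. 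Producing this regular subset while preserving all the hypotheses, and correctly bookkeeping the interaction of $\sigma$, $\tau$, $\kappa$, $\epsilon$, and $\epsilon'$, is the principal technical obstacle.
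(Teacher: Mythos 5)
Your overall strategy is the same as the paper's: bound $\En_\delta(\phi_f,X\times X)$ from below pointwise via Lemma~\ref{lm:phiEnergy}, pass to the expectation over $f\sim\mu$ by Jensen, and split the resulting quadruple sum over $\tilde X^4$ by the size of $\norm{x_2-x_2'}$. But there is a genuine gap in your treatment of the near-diagonal regime. You observe (correctly) that you need a local density bound of the form $\#(\tilde X\cap\Ball(y,\rho))\lesssim\delta^{-O(\epsilon)}\rho^\kappa\Ncov(X)$, and you propose to obtain it by first passing to a Frostman-regularized subset $X'\subset X$, which you flag as the principal technical obstacle. You have missed that no regularization is needed: under the contradiction hypothesis $\Ncov(X+X)\leq\delta^{-\epsilon}\Ncov(X)$, the desired bound is immediate from the elementary inequality
\[\Ncov(X+X)\gg\Ncov[\rho](X)\max_{w\in\R^n}\Ncov\bigl(X\cap\Ball(w,\rho)\bigr)\]
(translating the most concentrated sub-ball of $X$ by a maximal $2\rho$-separated subset of $X$ produces disjoint copies inside $X+X$), combined with hypothesis (ii). This is precisely the paper's estimate \eqref{eq:NonCrA}. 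Your regularization route is not only unnecessary but also raises nontrivial issues you do not resolve (whether hypotheses (ii), (iii) and the smallness of $\Ncov(X'+X')$, $\Ncov(X'+fX')$ all transfer to $X'$).

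A second, smaller problem is the bookkeeping of the splitting threshold. Your $\rho^*\asymp\delta^{1-(\sigma+\epsilon)/\tau}$ balances the far-regime bound $\rho^{-\tau}\delta^\tau N^4$ against $N^3$ (using $N\leq\delta^{-\sigma-\epsilon}$), so with this choice the far contribution is only $\asymp N^3$, which does not beat $N^3/K$, contradicting your claim that both regimes contribute $\asymp N^3\delta^{\kappa(\tau-\sigma)/\tau-O(\epsilon)}$. The correct threshold balances the far bound $\delta^{\tau-\sigma}\rho^{-\tau}N^3$ against the near bound $\rho^\kappa N^3$, namely $\rho=\delta^{(\tau-\sigma)/(\tau+\kappa)}$, giving both contributions $\asymp\delta^{\kappa(\tau-\sigma)/(\tau+\kappa)-O(\epsilon)}N^3$. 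Also, your parenthetical ``$\sigma<\tau$ ensures $\rho^*\gg\delta$'' should read ``$\rho^*\ll1$''.
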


\begin{proof}
Let $X$ and $\mu$ be as in the statement. Assume that $\Ncov(X+X) \leq \delta^{-\epsilon}\Ncov(X)$. For all $\rho \geq \delta$ we have
\[\Ncov(X+X) \gg \Ncov[\rho](X) \max_{w\in\R^n} \Ncov(X\cap \Ball(w,\rho)).\]
Therefore, by assumption (ii),
\begin{equation}\label{eq:NonCrA}
\max_w \Ncov(X\cap \Ball(w,\rho)) \leq \delta^{-O(\epsilon)}\rho^\kappa \Ncov(X).
\end{equation}

Let $f$ be a random variable following the law $\mu$. Define $\phi_f \colon \R^n \times \R^n \to \R^n $ by
\[\phi_f(x,y) = x + fy.\]
This map is $\delta^{-\epsilon}$-Lipschitz by assumption (iv). Consider the $\phi_f$-energy of $X \times X$. By Lemma~\ref{lm:phiEnergy}(i), 
\[ \Ncov(X + fX) \gg \frac{\Ncov(X)^4}{\En_\delta(\phi_f, X \times X)}.\]
Hence by Jensen's inequality,
\[\Espr{\Ncov(X + fX)} \gg \frac{\Ncov(X)^4}{\Espr{\En_\delta(\phi_f, X \times X)}}.\]

The rest of the proof consists of bounding the expectation $\Espr{\En_\delta(\phi_f, X \times X)}$ from above. Fix $\tilde X$ a maximal $\delta$-separated subset of $X$. By Lemma~\ref{lm:phiEnergy}(ii),
\begin{equation*}
\Espr{\En_\delta(\phi_f, X \times X)} \ll \sum_{x,x',y,y'\in \tilde X} \Prob{f(y - y') \in x' - x + B(0,\delta^{1-2\epsilon})}.
\end{equation*}

Let $\rho \geq \delta$ be a scale to be chosen later. We split the sum into two parts according to whether $\norm{y - y'} \geq \rho$. If it is the case then the assumption (v) yields for any $x, x' \in \tilde X$,
\[\Prob{f(y - y') \in x' - x + B(0,\delta^{1-2\epsilon})} \leq \delta^{-O(\epsilon)}\rho^{-\tau}\delta^\tau.\]

To estimate the other part we note that the number of pairs $(y,y')$ such that $\norm{y-y'}\leq \rho$ can be bounded using \eqref{eq:NonCrA}.
\[\# \{(y,y') \in \tilde X \times \tilde X \mid \norm{y-y'}\leq \rho\} \leq \delta^{-O(\epsilon)}\rho^\kappa\Ncov(X)^2.\]
Moreover, for any $x,y,y' \in \tilde X$,
\[\sum_{x' \in \tilde X}\Prob{f(y - y') \in x' - x + B(0,\delta^{1-2\epsilon})} \leq \delta^{-O(\epsilon)}\]
since the events on the left-hand side can occur simultaneously for at most $\delta^{-O(\epsilon)}$ different $x' \in \tilde X$.  

By combining these inequalities and assumption (iii) and taking $\rho = \delta^{\frac{\tau - \sigma}{\tau + \kappa}}$, we obtain
\begin{align*}
\Espr{\En_\delta(\phi_f, X \times X)} &\leq \delta^{-O(\epsilon)}\bigl(\rho^{-\tau}\delta^\tau\abs{\tilde X}^4 + \rho^\kappa\abs{\tilde X}^3\bigr)\\
&\leq \delta^{-O(\epsilon)} (\delta^{\tau - \sigma}\rho^{-\tau} + \rho^\kappa)\Ncov(X)^3\\
&\leq \delta^{\frac{\kappa(\tau - \sigma)}{\tau + \kappa} - O(\epsilon)} \Ncov(X)^3.
\end{align*}
It follows that when $\epsilon$ is small enough, $\Espr{\Ncov(X + fX)} \geq \delta^{-\epsilon}\Ncov(X)$.
\end{proof}

\subsection{Acting on \texorpdfstring{$\R^n$}{R^n}, using sum and product}
Before using Theorem~\ref{thm:SumProduct} we need to know how matrix addition and multiplication affects the estimate~\eqref{eq:ActionRn}. In this subsection we show that in order to establish \eqref{eq:ActionRn}, it suffices to prove it with $\sg{A}_s + \Ball(0,\delta)$ in place of $A$ for some $s\geq 1$. 

Let $\epsilon > 0$. Let $X \subset \R^n$ be a bounded subset. Similarly to the consideration of good elements in \cite[Proposition 3.3]{BourgainKatzTao} and the basic construction in \cite[Proposition 3.1]{Tao_Ring}, we define at scale $\delta > 0$,
\[\Sset(X;\delta^{-\epsilon}) = \ens{f \in \Ball(0,\delta^{-\epsilon}) \cap \End(\R^n) \mid \Ncov(X + fX) \leq \delta^{-\epsilon}\Ncov(X)}.\]

The set $\Sset(X;\delta^{-\epsilon})$ exhibits the structure of an "approximate ring".
\begin{lemm}\label{lm:setSA}
Let $X \subset \Ball(0,\delta^{-\epsilon})$ be a subset of $\R^n$, we have
\begin{enumerate}
\item If $a \in \Sset(X;\delta^{-\epsilon})$ and $b \in \End(\R^n)$ such that $\norm{a-b} \leq \delta^{1-\epsilon}$, then $b \in \Sset(X;\delta^{-O(\epsilon)})$
\item If $\Id, a, b \in \Sset(X;\delta^{-\epsilon})$, then $a + b$, $a - b$ and $ab$ all belong to $\Sset(X;\delta^{-O(\epsilon)})$.
\item Suppose that $a$ invertible and $\norm{\inv{a}} \leq \delta^{-\epsilon}$. If $a \in \Sset(X;\delta^{-\epsilon})$, then $\inv{a} \in \Sset(X;\delta^{-O(\epsilon)})$.
\item If $\Id, a_1, \dotsc, a_s \in \Sset(X;\delta^{-\epsilon})$, then 
\[\Ncov(X + a_1 X + \dotsb + a_s X) \leq \delta^{-O(s\epsilon)} \Ncov(X).\]
\item If $\Id, a \in \Sset(X;\delta^{-\epsilon})$, then for all $\rho \geq \delta$, we have 
\[\Ncov[\rho](X+aX) \leq \delta^{-O(\epsilon)}\Ncov[\rho](X).\]
In other words, $a \in \Sset[\rho](X;\delta^{-O(\epsilon)})$.
\end{enumerate}
\end{lemm}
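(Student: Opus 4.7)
The plan is to derive each of the five properties from the discretized Ruzsa calculus of Section~\ref{sc:prelim}: the Ruzsa covering lemma (Lemma~\ref{lm:RuzsaCov}), the Plünnecke--Ruzsa sumset inequality (Lemma~\ref{lm:RuzsaSum}), and the elementary fact that a $\delta^{-\epsilon}$-Lipschitz map alters $\delta$-covering numbers by at most a factor $\delta^{-n\epsilon}$. The structural hypothesis throughout is $\Id \in \Sset(X;\delta^{-\epsilon})$, which asserts that $X$ has doubling at most $\delta^{-\epsilon}$ at scale $\delta$; combined with Lemma~\ref{lm:RuzsaSum} this immediately yields $\Ncov(kX - lX) \leq \delta^{-O(\epsilon)}\Ncov(X)$ for any bounded $k,l$.

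Parts (i) and (iii) are essentially cosmetic. For (i), one has $X + bX \subset X + aX + (b-a)X$, and $(b-a)X$ has diameter $\ll \delta^{1-2\epsilon}$, so $X+bX$ lies in an $O(\delta^{1-2\epsilon})$-thickening of $X + aX$; this inflates $\delta$-covering numbers by only $\delta^{-O(\epsilon)}$. For (iii), the identity $X + a^{-1}X = a^{-1}(X + aX)$ together with the Lipschitz bound on $a^{-1}$ gives the claim at once. Part (ii) is the heart of the lemma. Lemma~\ref{lm:RuzsaCov} applied at scale $\delta$ to the pairs $(X,aX)$ and $(X,bX)$ yields $aX, bX \subset X - X + F + \Ball(0, \delta)$ with $|F| \leq \delta^{-\epsilon}$, so $X + (a \pm b)X$ sits inside $\delta^{-O(\epsilon)}$ translates of $3X - 2X$, which is controlled by Plünnecke--Ruzsa. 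For the product $ab$, applying $a$ to the $bX$-covering gives $abX \subset aX - aX + aF + \Ball(0, \delta^{1-\epsilon})$, reducing the question to bounding $\Ncov(X + aX - aX)$; for this I would apply Lemma~\ref{lm:RuzsaSum} to $Y := X \cup aX$, whose scale-$\delta$ doubling is $\delta^{-O(\epsilon)}$ because the three pairwise sumsets $X + X$, $X + aX$, and $aX + aX = a(X + X)$ are all controlled (the last via the Lipschitz bound on $a$). Part (iv) is a straightforward iteration of the Ruzsa-covering trick applied to each $a_i X$ in parallel, closing with a single Plünnecke--Ruzsa bound on the multiple sumset $(s+1)X - sX$.

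The main obstacle will be (v), which is of a different nature: it asserts that the scale-$\delta$ bound on $X + aX$ transfers to every coarser scale $\rho \geq \delta$, still with constant $\delta^{-O(\epsilon)}$ rather than $\rho^{-O(\epsilon)}$. The naive estimate $\Ncov[\rho](X + aX) \leq \Ncov(X + aX) \leq \delta^{-\epsilon}\Ncov(X)$ is useless because the ratio $\Ncov(X)/\Ncov[\rho](X)$ can be as large as $(\rho/\delta)^n$, much bigger than the allowed $\delta^{-O(\epsilon)}$. The plan is to rerun the argument of (ii) entirely at scale $\rho$: the fine-scale inclusion $aX \subset X - X + F + \Ball(0,\delta)$ upgrades to $aX \subset X - X + F + \Ball(0,\rho)$, so $\Ncov[\rho](X + aX) \leq |F|\,\Ncov[\rho](2X - X)$, and the remaining task is a scale-$\rho$ Plünnecke--Ruzsa estimate $\Ncov[\rho](kX - lX) \leq \delta^{-O(\epsilon)}\Ncov[\rho](X)$ for $X$ itself. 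The anticipated route is to pass to a maximal $\rho$-separated subset of $X$ and exploit the scale-$\delta$ additive structure of $X$ to produce a discrete small-doubling statement at scale $\rho$; the technical core will be to carry out this coarsening with only a $\delta^{-O(\epsilon)}$ loss, and in particular without picking up any factor that depends on $\rho/\delta$.
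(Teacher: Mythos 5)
Parts (i), (iii) and (iv) match the paper's argument essentially verbatim, and for $a\pm b$ in (ii) your route (Ruzsa covering on $aX$ and on $bX$ separately, landing on $3X-2X$, then Plünnecke--Ruzsa on $X$) is the paper's. For the product $ab$ you take a genuine variant: instead of applying Ruzsa covering a second time to $aX$ to fall back on sumsets of $X$ alone (which is what the paper does, bounding $X+abX$ inside $\delta^{-O(\epsilon)}$ translates of $3X-2X$ plus a $\delta^{1-\epsilon}$-thickening), you bound $\Ncov(X+aX-aX)$ by applying Lemma~\ref{lm:RuzsaSum} to $Y=X\cup aX$, using that $X+X$, $X+aX$ and $a(X+X)$ all have controlled size. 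Both routes close; the paper's is a bit shorter because it never has to certify that $Y$ has small doubling, while yours is more systematic and generalizes painlessly to (iv).

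The gap is in (v), and you already sense it. The reduction you set up---upgrade $aX\subset X-X+F+\Ball(0,\delta)$ to $aX\subset X-X+F+\Ball(0,\rho)$ and bound $\Ncov[\rho](X+aX)\le |F|\,\Ncov[\rho](2X-X)$---is fine, but the remaining ``scale-$\rho$ Plünnecke--Ruzsa estimate'' $\Ncov[\rho](2X-X)\le\delta^{-O(\epsilon)}\Ncov[\rho](X)$ is not something Lemma~\ref{lm:RuzsaSum} can deliver: that lemma takes a doubling hypothesis and produces a conclusion \emph{at the same scale}, whereas here the hypothesis lives at scale $\delta$ and the conclusion you need lives at scale $\rho$. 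Indeed the special case with $2X-X$ replaced by $X+X$ is exactly (v) for $a=\Id$, so as stated the reduction is circular. Your sketch---pass to a maximal $\rho$-separated subset and ``exploit the scale-$\delta$ structure''---names the problem but not the solution. The missing ingredient, which the paper uses, is a two-scale pigeonhole through the triple sum $X+X+aX$: by (iv), $\Ncov(X+X+aX)\le\delta^{-O(\epsilon)}\Ncov(X)$; by pigeonhole there is $x$ with $\Ncov(X\cap\Ball(x,\rho))\ge\Ncov(X)/\Ncov[\rho](X)$; and for any $x$ one has the Fubini-type lower bound $\Ncov(X+X+aX)\gg\Ncov(X\cap\Ball(x,\rho))\,\Ncov[\rho](X+aX)$, obtained by translating $X\cap\Ball(x,\rho)$ by a maximal $\rho$-separated subset of $X+aX$ and noting the bounded overlap. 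Dividing the last two displays gives (v) in one line. Without some version of this argument your plan for (v) does not close, and it is precisely this two-scale estimate that makes (v) qualitatively harder than (i)--(iv).
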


\begin{proof}
\begin{enumerate}
\item If $\Ncov(X + aX) \leq \delta^{-\epsilon}\Ncov(X)$ and $\norm{a-b} \leq \delta^{1-\epsilon}$, then
\[X + bX \subset X + aX + \Ball(0,\delta^{1-2\epsilon}).\]
Hence $\Ncov(X + bX) \leq \delta^{-O(\epsilon)}\Ncov(X)$.
\item Let $a,b \in \Sset(X;\delta^{-\epsilon})$. By Ruzsa's covering lemma (Lemma~\ref{lm:RuzsaCov}),
\[aX \subset X - X + \OB(\delta^{-\epsilon}) + \Ball(0, \delta),\]
and
\[bX \subset X - X + \OB(\delta^{-\epsilon}) + \Ball(0, \delta).\]
Hence,
\[X + (a+b)X \subset 3X-2X + \OB(\delta^{-2\epsilon}) + \Ball(0,2\delta),\]
\[X + (a-b)X \subset 3X-2X + \OB(\delta^{-2\epsilon}) + \Ball(0,2\delta).\]
And finally by the Plünnecke-Ruzsa inequality (Lemma~\ref{lm:RuzsaSum}), 
\[\Ncov(X + (a+b)X),\; \Ncov(X + (a-b)X) \leq \delta^{-O(\epsilon)}\Ncov(X).\]

Moreover, since $\norm{a} \leq \delta^{-\epsilon}$,
\begin{align*}
X + abX &\subset X + a(X-X + \OB(\delta^{-\epsilon}) + \Ball(0,\delta))\\
&\subset X + aX - aX + \OB(\delta^{-\epsilon}) + \Ball(0,\delta^{1-\epsilon})\\
&\subset 3X - 2X + \OB(\delta^{-3\epsilon}) + \Ball(0,3\delta^{1-\epsilon})
\end{align*}
Hence, $\Ncov(X + abX) \leq \delta^{-O(\epsilon)}\Ncov(X)$.
\item If $a \in \GL_n(\R)$ and $\norm{\inv{a}} \leq \delta^{-\epsilon}$, then $X + \inv{a}X = \inv{a}(X + aX)$ and hence $\Ncov(X + \inv{a}X) \leq \delta^{-O(\epsilon)}\Ncov(X + aX)$.
\item The argument is similar as in (ii).
\item For all $\rho \geq \delta$ we have
\[\Ncov(X) \leq \max_{x \in \R^n} \Ncov(X \cap\Ball(x,\rho)) \Ncov[\rho](X)\]
and for any $x \in \R^n$,
\[\Ncov(X + X + aX) \gg \Ncov(X\cap \Ball(x,\rho)) \Ncov[\rho](X + aX).\]
If $\Id, a \in \Sset(X;\delta^{-\epsilon})$, then by (iv),
\[\Ncov(X + X + aX) \leq \delta^{-O(\epsilon)} \Ncov(X).\]
We obtain the desired estimate by combining the three inequalities above.\qedhere
\end{enumerate}
\end{proof}

\subsection{Almost-generating a subalgebra}
Now we can focus on the growth of $A$ as a set of matrices. We cannot use Theorem~\ref{thm:SumProduct} yet since we do not know if $A$ is away from subalgebras. In this subsection, we show that we can find a subalgebra $E_0$ of $\End(\R^n)$ such that, at some scale, $A$ is contained in $E_0$ and is effectively away from subalgebras in $E_0$. This subalgebra $E_0$ can be viewed as approximately generated by $A$. Moreover, under the quantitative irreducibility condition, $E_0$ shall be described by Proposition~\ref{pr:Wedder}.

\begin{prop}\label{pr:gAgE}
Given $\epsilon_1 > 0$, there is $c > 0$ such that for any $0 < \epsilon < c$, the following holds for all $\delta > 0$ sufficiently small. Let $A$ be a subset of $\End(\R^n)$. If 
\begin{itemize}
\item $A \subset \Ball(0,\delta^{-\epsilon})$,
\item $A$ acts $\delta^\epsilon$-irreducibly on $\R^n$;
\end{itemize}
then there exists $\delta_1 \in [\delta, \delta^c]$ and $g \in \GL_n(\R)$ with $\norm{g} + \norm{\inv{g}} \leq \delta^{-O(\epsilon)}$ such that for $E = \End(\R^n)$, $\Mat_{\frac{n}{2}}(\C)_\R$ or $\Mat_{\frac{n}{4}}(\HH)_\R$,
\[gA\inv{g} \subset E + \Ball(0,\delta_1)\]
and for all proper subalgebras $F$ of $E$, 
\[\exists a \in A, d(ga\inv{g},F) > \delta_1^{\epsilon_1}.\]
\end{prop}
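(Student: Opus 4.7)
The strategy is an iterative refinement driven by the effective Wedderburn theorem (Proposition~\ref{pr:Wedder}). At each stage we pin down $A$, after conjugation, inside an ever-smaller standard subalgebra of $\End(\R^n)$. Because up to conjugation the only subalgebras of $\End(\R^n)$ that act irreducibly on $\R^n$ are the three standard ones $\End(\R^n) \supset \Mat_{\frac{n}{2}}(\C)_\R \supset \Mat_{\frac{n}{4}}(\HH)_\R$, whose dimensions are strictly decreasing, the iteration must terminate after at most two refinement steps.

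More concretely, I would construct inductively a triple $(g_k, E_k, \delta_k)$, initialized by $g_0 = \Id$, $E_0 = \End(\R^n)$, $\delta_0 = \delta$, in which $g_k \in \GL_n(\R)$ satisfies $\norm{g_k} + \norm{\inv{g_k}} \leq \delta^{-O(\epsilon)}$, $E_k$ is one of the three standard subalgebras, and $\delta_k \geq \delta$, subject to the invariants
\[g_k A \inv{g_k} \subset E_k + \Ball(0, \delta_k) \quad \text{and} \quad g_k A \inv{g_k} \text{ acts } \delta^{O(\epsilon)}\text{-irreducibly on } \R^n.\]
At each step we test whether $g_k A \inv{g_k}$ is $\delta_k^{\epsilon_1}$-away from every proper subalgebra of $E_k$. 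If so we stop and output $(g_k, E_k, \delta_k)$.

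Otherwise there is a proper subalgebra $F \subset E_k$ with $g_k A \inv{g_k} \subset F + \Ball(0, \delta_k^{\epsilon_1})$. I claim $F$ itself acts $\delta^{O(\epsilon)}$-irreducibly on $\R^n$: for any proper nonzero $W \subset \R^n$, irreducibility of $g_k A \inv{g_k}$ yields $a$ and $w \in W \cap \Ball(0,1)$ with $d(aw, W) \geq \delta^{O(\epsilon)}$; decomposing $a = f + r$ with $f \in F$ and $\norm{r} \leq \delta_k^{\epsilon_1}$, the triangle inequality gives $d(fw, W) \geq \delta^{O(\epsilon)} - \delta_k^{\epsilon_1}$, which stays at least $\tfrac{1}{2}\delta^{O(\epsilon)}$ provided $\delta_k^{\epsilon_1}$ is substantially smaller than $\delta^{O(\epsilon)}$. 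Applying Proposition~\ref{pr:Wedder} to the irreducibly-acting subalgebra $F$ now produces $h \in \GL_n(\R)$ with $\norm{h} + \norm{\inv{h}} \leq \delta^{-O(\epsilon)}$ such that $hF\inv{h}$ is one of the three standard algebras; since $F$ is proper in $E_k$ we have $\dim(hF\inv{h}) < \dim(E_k)$. Update $g_{k+1} = h g_k$, $E_{k+1} = hF\inv{h}$, $\delta_{k+1} = \delta_k^{\epsilon_1}\norm{h}\norm{\inv{h}}$, and verify the invariants for the new triple.

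Writing $\delta_k = \delta^{a_k}$, the update yields $a_{k+1} \geq a_k \epsilon_1 - O(\epsilon)$, so after the at-most-two refinement steps we obtain $a_K \geq \epsilon_1^2 - O(\epsilon)$. Choosing $c = \min(\tfrac{1}{2}, \tfrac{1}{4}\epsilon_1^2)$ and taking $\epsilon < c$ with the implicit constant in $O(\epsilon)$ absorbed gives $a_K \geq c$, hence $\delta_K \in [\delta, \delta^c]$; and $g = g_K$, being a product of at most three factors each of norm $\delta^{-O(\epsilon)}$, also satisfies $\norm{g} + \norm{\inv{g}} \leq \delta^{-O(\epsilon)}$.

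The main technical obstacle is the coupled bookkeeping of three quantities that degrade at each iteration: the approximation scale $\delta_k$, the conjugation norm, and the irreducibility constant of $g_k A \inv{g_k}$. The inductive step requires $\delta_k^{\epsilon_1}$ to remain much smaller than the current irreducibility constant so that Proposition~\ref{pr:Wedder} can be reapplied, and this is the source of the polynomial constraint $\epsilon \ll \epsilon_1^2$ between the two smallness parameters, which is the only real subtlety in the argument.
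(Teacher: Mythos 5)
Your iterative scheme is correct and takes a genuinely different route from the paper's. The paper uses a single-pass maximality argument: it defines $l_0$ as the largest codimension $l$ for which some subalgebra $E$ satisfies $A \subset E + \Ball(0,\delta^{(\epsilon_1/4)^l})$, sets $\delta_1 = \delta^{\frac{1}{2}(\epsilon_1/4)^{l_0}}$, reads off the \emph{away from proper subalgebras} property directly from the maximality of $l_0$, and only then applies Proposition~\ref{pr:Wedder}, exactly once. Your route applies Proposition~\ref{pr:Wedder} at each refinement step and terminates because an irreducibly-acting subalgebra of $\End(\R^n)$ has dimension $n^2$, $n^2/2$ or $n^2/4$, so the dimensions of the $E_k$ strictly decrease through at most three values. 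What each buys: yours needs at most two conjugations and yields a constant $c$ that is polynomial in $\epsilon_1$ and independent of $n$ — though your claimed $\epsilon_1^2$ is slightly optimistic, since the smallness threshold $\delta_k^{\epsilon_1} \ll \delta^{O(\epsilon)}$ is also needed at the terminal index $k=2$ to obtain the contradiction forcing the test to pass, giving roughly $c \gtrsim \epsilon_1^3$ — whereas the paper, by not exploiting Wedderburn at the intermediate stage, tolerates arbitrary subalgebras as witnesses and pays $c_1 = \frac{1}{2}(\epsilon_1/4)^{n^2}$, exponentially small in $n^2$. Two small points to tidy when writing this up: in the irreducibility transfer you should decompose $g_k a \inv{g_k} = f + r$ rather than $a = f + r$, and the resulting $f$ must be renormalized to $f/\norm{f} \in F\cap\Ball(0,1)$ before invoking the definition of a subalgebra acting $\rho$-irreducibly, which costs one more factor $\delta^{O(\epsilon)}$ but is harmless.
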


\begin{proof}
Let $l_0$ be the largest among all integers $l \in \N$ such that there exists a subalgebra $E$ of $\End(\R^n)$ of codimension $l$ and such that $A \subset E + \Ball(0,\delta^{(\frac{\epsilon_1}{4})^l})$. We know $l_0$ exists since $0$ is clearly such an $l$. Set $\delta_1 = \delta^{\frac{1}{2}(\frac{\epsilon_1}{4})^{l_0}}$. Thus $\delta \leq \delta_1 \leq \delta^{c_1}$ with $c_1 = \frac{1}{2}(\frac{\epsilon_1}{4})^{n^2}$. By the definition of $l_0$ there is a subalgebra $E_0$ of $\End(\R^n)$ such that
\begin{equation}\label{eq:AgenE0}
A \subset E_0 + \Ball(0,\delta_1^2)
\end{equation}
and for any proper subalgebra $F$ of $E_0$, there is $a \in A$ such that 
\begin{equation}\label{eq:daFlarge} 
d(a,F) > \delta_1^{\frac{\epsilon_1}{2}}
\end{equation}

We shall apply Proposition~\ref{pr:Wedder} to this subalgebra $E_0$ in order to conjugate it into one of the three "model" subalgebras. For any nonzero linear subspace $W$ of $\R^n$, since $A$ acts $\delta^\epsilon$-irreducibly , there is $w \in W \cap \Ball(0,1)$ and $a \in A$ such that $d(aw,W) > \delta^\epsilon$. Then there is $a' \in E_0$ such that $\norm{a-a'} < \delta_1$. Hence $\norm{a'} < \delta^{-\epsilon} + \delta_1 < \delta^{-2\epsilon}$ and 
\[d(\frac{a'}{\norm{a'}}w,W) > \delta^{2\epsilon}(\delta^{\epsilon} - \delta_1) \geq \delta^{2\epsilon}(\delta^{\epsilon} - \delta^{c_1}) > \delta^{4\epsilon}\]
provided $\epsilon \leq \frac{c_1}{2}$. Thus $E_0$ acts $\delta^{4\epsilon}$-irreducibly on $\R^n$. We conclude that there is $g \in \GL_n(\R)$ with $\norm{g} + \norm{\inv{g}} < \delta^{-O(\epsilon)}$ and $E = gE_0\inv{g}$ is one of these three subalgebras : $\End(\R^n)$, $\Mat_{\frac{n}{2}}(\C)_\R$, $\Mat_{\frac{n}{4}}(\HH)_\R$.

The map $x \mapsto gx\inv{g}$ is $\delta^{-O(\epsilon)}$-bi-Lipschitz. Hence by \eqref{eq:AgenE0} and \eqref{eq:daFlarge},
\[gA\inv{g} \subset E + \Ball(0,\delta^{-O(\epsilon)}\delta_1^2)\]
and $gA\inv{g}$ is $\delta^{O(\epsilon)}\delta_1^{\frac{\epsilon_1}{2}}$-away from proper subalgebras of $E$. When $\epsilon$ is small enough compared to $c_1\epsilon_1$, we have both 
\[\delta^{-O(\epsilon)}\delta_1^2 \leq \delta_1 \quad \text{and} \quad \delta^{O(\epsilon)}\delta_1^{\frac{\epsilon_1}{2}} \geq \delta_1^{\epsilon_1}.\]
And this completes the proof.
\end{proof}

\begin{rmq}
From the proof we see that the new scale $\delta_1$ can be chosen such that $\delta$ is an integer power of $\delta_1$.
\end{rmq}

\subsection{Proof of Theorem~\ref{thm:ActionRn}.} The proof consists of putting together what precedes. The only technical difficulty is due to the change of the working scale required by Proposition~\ref{pr:gAgE}. 

\begin{proof}[Proof of Theorem~\ref{thm:ActionRn}]
Assume for a contradiction that $A\cup \ens{\Id} \subset \Sset(X;\delta^{-\epsilon})$.

Let $\epsilon_1 > 0$ be a small constant to be chosen later. By applying Proposition~\ref{pr:gAgE} to the set $A$, we get a constant $c>0$ depending on $\epsilon_1$, a new scale $\delta_1 \in [\delta, \delta^c]$, an isomorphism $g \in \GL_n(\R)$ and a subalgebra $E = \End(\R^n)$ or $\Mat_{\frac{n}{2}}(\C)_\R$ or $\Mat_{\frac{n}{4}}(\HH)_\R$ such that $\norm{g} + \norm{\inv{g}} \leq \delta^{-O(\epsilon)}$ and
\[gA\inv{g} \subset E + \Ball(0,\delta_1)\]
and the projection of $gA\inv{g}$ on $E$, which we will denote by $A'$, is $\delta_1^{\epsilon_1}$-away from subalgebras in $E$. Moreover, we can verify the other assumptions of Theorem~\ref{thm:SumProduct} for the scale $\delta_1$ and parameter $\epsilon_1$ : we have
\begin{equation}\label{eq:BandB}
A' \subset gA\inv{g} + \Ball(0,\delta_1).
\end{equation}
Thus
\[A' \subset \Ball(0,\delta^{-O(\epsilon)} + \delta_1) \subset \Ball(0,\delta_1^{-O(\frac{\epsilon}{c})}).\]
And for all $\rho \geq \delta_1$, since $gA\inv{g} \subset A' + \Ball(0,\rho)$, we have
\[\Ncov[\rho](A') \gg \Ncov[\rho](gA\inv{g}) \gg \delta^{O(\epsilon)}\Ncov[\rho](A) \geq \delta^{O(\epsilon)}\rho^{-\kappa} \geq \delta_1^{O(\frac{\epsilon}{c})}\rho^{-\kappa}.\]
So when $\epsilon$ is sufficiently small depending on $\epsilon_1$ and $c$, we have both $A' \subset \Ball(0,\delta_1^{-\epsilon_1})$ and $\forall \rho \geq \delta_1$, $\Ncov[\rho](A')\geq \delta_1^{\epsilon_1} \rho^{-\kappa}$.

Let $\epsilon_0 > 0$ be a small constant to be chosen later in the proof. Applying Theorem~\ref{thm:SumProduct} to the set $A'$ at scale $\delta_1$ inside the algebra $E$, we obtain an integer $s \geq 1$ depending only on $n$, $\kappa$ and $\epsilon_0$ such that
\begin{equation}\label{eq:EinBC}
E \cap\Ball(0,\delta_1^{\epsilon_0}) \subset \sg{A'}_s + \Ball(0,\delta_1),
\end{equation}
if $\epsilon_1$ is chosen small enough depending on $n$, $\kappa$ and $\epsilon_0$.

From $A \subset \Sset(X;\delta^{-\epsilon})$, we get for all $a \in A$,
\[\Ncov(gX + ga\inv{g}gX) = \Ncov\bigl(g(X + aX)\bigr) \leq \delta^{-O(\epsilon)}\Ncov(X) \leq \delta^{-O(\epsilon)}\Ncov(gX).\]
In other words, $gA\inv{g} \subset \Sset(gX,\delta^{-O(\epsilon)})$. From Lemma~\ref{lm:setSA}(v), it follows that $gA\inv{g} \subset \Sset[\delta_1](gX;\delta^{-O(\epsilon)})$. Then by Lemma~\ref{lm:setSA}(i), 
\[A' \subset \Sset[\delta_1](gX;\delta_1^{-O(\frac{\epsilon}{c})}) \subset \Sset[\delta_1](gX;\delta_1^{-\epsilon_1}).\]
Repeated use of Lemma~\ref{lm:setSA}(ii) yields
\[\sg{A'}_s \subset \Sset[\delta_1](gX;\delta_1^{-O_s(\epsilon_1)}).\]
Then by \eqref{eq:EinBC} and Lemma~\ref{lm:setSA}(i),
\[E\cap \Ball(0,\delta_1^{\epsilon_0}) \subset \Sset[\delta_1](gX;\delta_1^{-O_s(\epsilon_1)}).\]
In particular, $\delta_1^{\epsilon_0}\Id \in \Sset[\delta_1](gX;\delta_1^{-O_s(\epsilon_1)})$. Hence, by Lemma~\ref{lm:setSA}(iii),
\[\delta_1^{-\epsilon_0}\Id \in \Sset[\delta_1](gX;\delta_1^{-O_s(\epsilon_1) - O(\epsilon_0)}).\]
And again by Lemma~\ref{lm:setSA}(ii), $E\cap \Ball(0,1) \subset  \Sset[\delta_1](gX;\delta_1^{-O_s(\epsilon_1) - O(\epsilon_0)})$.

Hence for any given $\epsilon_2 > 0$, we can choose sufficiently small $\epsilon_0 > 0$ and $\epsilon_1 > 0$ accordingly so that 
\begin{equation}\label{eq:EinSA}
E\cap \Ball(0,1) \subset  \Sset[\delta_1](gX;\delta_1^{-\epsilon_2}).
\end{equation}
Take $\epsilon_2$ to be the constant given by Lemma~\ref{lm:BnotinS} depending on $\sigma$ and $\kappa$. We would like to apply Lemma~\ref{lm:BnotinS} to the set $gX$ at scale $\delta_1$. It's easy to see that when $\epsilon$ is small enough,
\[ gX \subset \Ball(0,\delta^{-O(\epsilon)}) \subset \Ball(0,\delta_1^{-\epsilon_2}),\]
and for all $\rho \geq \delta_1$,
\[\Ncov[\rho](gX) \geq \delta^{O(\epsilon)}\Ncov(X) \geq \delta^{O(\epsilon)}\rho^{-\kappa} \geq \delta_1^{\epsilon_2}\rho^{-\kappa}.\]
So the first two assumptions in Lemma~\ref{lm:BnotinS} are satisfied but the conclusion fails by \eqref{eq:EinSA}. This means the assumption \ref{it:Xnotall} must fail, namely, $\Ncov[\delta_1](gX) > \delta_1^{-\sigma-\epsilon_2}$. Therefore,
\begin{equation}\label{eq:AalmostB}
\Ncov[\delta_1](X) \geq \delta^{O(\epsilon)}\delta_1^{-\sigma-\epsilon_2}.
\end{equation}

In other words, at scale $\delta_1$, the set $X$ is almost full in $\Ball(0,\delta^{-\epsilon})$. The idea of the rest of the proof is to use multiplication of elements of $A$ to propagate estimate~\eqref{eq:AalmostB} to smaller scales until we reach the original scale $\delta$ where we have the assumption $\Ncov(X) \leq \delta^{-\sigma-\epsilon}$.

We have by \eqref{eq:EinBC} and \eqref{eq:BandB}, 
\[\delta_1^{\Inv{2}}\Id \in \sg{A'}_s + \Ball(0,\delta_1) \subset g\sg{A}_s\inv{g} + \Ball(0,\delta^{-O_s(\epsilon)}\delta_1).\] 
Hence there exists $a \in \sg{A}_s$ such that $a \in \delta_1^{\Inv{2}} \Id + \Ball(0,\delta^{-O_s(\epsilon)}\delta_1)$. Taking the square, we obtain $a^2 \in \delta_1\Id + \Ball(0,\delta^{-O_s(\epsilon)}\delta_1^{\frac{3}{2}})$. Thus, when $\epsilon$ is sufficiently small,
\[a^2 \in \delta_1\Id + \Ball(0,\frac{\delta_1}{2}).\]
For all $\rho > 0$, the multiplication by $a^2$ will transform a $\rho$-separated set in $\R^n$ into a $\frac{\delta_1\rho}{2}$-separated set. Hence,
\[\Ncov[\delta_1\rho](a^2X) \gg \Ncov[\rho](X).\]
Moreover $a^2X \subset \Ball(0,\delta^{-2\epsilon}\delta_1)$. So
\[\Ncov[\delta_1\rho](X + a^2X) \geq \delta^{O(\epsilon)} \Ncov[\delta^{-2\epsilon}\delta_1](X) \Ncov[\delta_1\rho](a^2X) \geq \delta^{O(\epsilon)}\Ncov[\delta_1](X)\Ncov[\rho](X).\] 
Lemma~\ref{lm:setSA} tells us $a^2 \in \Sset(X,\delta^{-O_s(\epsilon)})$ and for all $\rho \geq \delta_1^{-1}\delta$,
\[\Ncov[\delta_1\rho](X+a^2X) \leq \delta^{-O_s(\epsilon)} \Ncov[\delta_1\rho](X).\]
Combining the two estimates above, we obtain
\begin{equation}\label{eq:NcovdrA}
\Ncov[\delta_1\rho](X) \geq \delta^{O_s(\epsilon)}\Ncov[\delta_1](X)\Ncov[\rho](X)
\end{equation}
whenever $\delta_1\rho \geq \delta$.

According to the remark after the proof of Proposition~\ref{pr:gAgE}, we can assume that there is an integer $k$ with $1 \leq k \leq \Inv{c}$ such that $\delta = \delta_1^k$. Applying \eqref{eq:NcovdrA} to $\rho = \delta_1^i$, $i = 1,\dotsc, k-1$, we obtain
\[\Ncov[\delta_1^k](X) \geq \delta^{O_s(k\epsilon)} \Ncov[\delta_1](X)^k.\] 
Hence, by \eqref{eq:AalmostB} and the assumption on $X$,
\[\delta^{-\sigma -\epsilon} \geq \Ncov(X) \geq \delta^{-\sigma-\epsilon_2 + O_s(k\epsilon)},\]
which is clearly impossible when $\epsilon$ is small enough and this finishes the proof. 
\end{proof}

\begin{rmq}
Theorem~\ref{thm:ActionRn} together with Lemma~\ref{lm:awWwaW} yields another proof of Theorem~\ref{thm:SumProduct2}. It suffices to consider the action of left and right multiplications of elements of $A$ on the set $A$. Actually, this proves something slightly stronger. Namely, the conclusion of Theorem~\ref{thm:SumProduct2} can be improved to
\[\Ncov(A + A) + \max_{a \in A}\Ncov(A + a\cdot A) + \max_{a \in A}\Ncov(A + A\cdot a)  \geq \delta^{-\epsilon} \Ncov(A).\]
\end{rmq}

\section{A sum-product estimate in simple Lie algebras}\label{sc:Lie}

In this last section, we prove Corollary~\ref{cr:SumBracket}. Let $\gf$ be a normed finite-dimensional real simple Lie algebra. We consider the adjoint representation $\ad \colon \gf \to \End(\gf)$, i.e. $\ad(a)x = [a,x]$ for all $a,x \in \gf$.

\begin{proof}[Proof of Corollary~\ref{cr:SumBracket}]
It suffices to apply Theorem~\ref{thm:ActionRn} to the set $A \subset \gf$ and the set of endomorphisms $\ad(A) \subset \End(\gf)$. Note that the kernel of $\ad$ is the center of $\gf$ which is trivial for $\gf$ is simple. Therefore, $\ad$ is a bi-Lipschitz map from $\gf$ to its image. This gives the non-concentration condition on $\ad(A)$. And the quantitative irreducibility condition of Theorem~\ref{thm:ActionRn} is guaranteed by the following lemma.
\end{proof}

\begin{lemm}
Let $0 < \rho \leq \Inv{2}$ be a parameter. Let $A$ be a subset in a normed simple Lie algebra $\gf$ of finite dimension. Assume that $A \subset \Ball(0,\inv{\rho})$ and that $A$ is $\rho$-away from Lie subalgebras in $\gf$. Then $\ad(A)$ acts $\rho^{O_{\gf}(1)}$-irreducibly on $\gf$.
\end{lemm}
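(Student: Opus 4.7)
The plan is to mirror the proof of Lemma~\ref{lm:awWwaW}, replacing the associative multiplication by the Lie bracket and the two-sided multiplier algebra of a subspace $W$ by its Lie normalizer. Unfolding the definition, what we need to show is: for every proper nonzero linear subspace $W \subset \gf$ there exist $a \in A$ and $w \in W \cap \Ball(0,1)$ with $d([a,w], W) \geq \rho^{O_\gf(1)}$.

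First, by the straightforward Lie analogue of Lemma~\ref{lm:FiniteA} (the proof only uses the notion of subalgebra through a closure property, which Lie subalgebras share), we may assume that $A = \{a_1,\dots,a_n\}$ has at most $n = \dim \gf$ elements, is contained in the unit ball (after rescaling by $\rho$) and is $\rho$-away from Lie subalgebras. We may also assume the norm on $\gf$ is Euclidean, all other norms being equivalent. Fix an integer $k$ with $0 < k < n$ and suppose towards a contradiction that there exists $W_0 \in \Gr(k,\gf)$ such that
\[d([a_i,w], W_0) < \rho^C \quad \text{for all } i \text{ and all } w \in W_0 \cap \Ball(0,1),\]
where $C$ is a large constant to be chosen below.

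Now introduce the real analytic map $f \colon \Gr(k,\gf) \times \gf^n \to \R$ defined by
\[f(W;x_1,\dots,x_n) = \sum_{i=1}^n \int_{W \cap \Ball(0,1)} d([x_i,w], W)^2 \dd w,\]
the integration being with respect to the $k$-dimensional Lebesgue measure on $W \cap \Ball(0,1)$. Real analyticity follows, as in the proof of Lemma~\ref{lm:awWwaW}, from the existence of real analytic local trivialisations of the tautological bundle over $\Gr(k,\gf)$ by isometries. By assumption $f(W_0;a_1,\dots,a_n) \ll \rho^C$, so the \L{}ojasiewicz inequality (Theorem~\ref{thm:Lojasiewicz}) applied on the compact set $\Gr(k,\gf) \times \Ball(0,1)^n$ produces $W_1 \in \Gr(k,\gf)$ and $b_1,\dots,b_n \in \gf$ satisfying $f(W_1;b_1,\dots,b_n) = 0$ and $\norm{a_i - b_i} < \rho$ for every $i$, provided $C$ has been chosen large enough in terms of the \L{}ojasiewicz exponent.

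The vanishing of $f$ at $(W_1;b_1,\dots,b_n)$ means exactly that $[b_i, W_1] \subset W_1$ for each $i$, i.e. every $b_i$ belongs to the Lie normalizer
\[\mathfrak{n}_\gf(W_1) = \ens{x \in \gf \mid [x,W_1] \subset W_1},\]
which is a Lie subalgebra of $\gf$. Since $\norm{a_i - b_i} < \rho$ for all $i$, the set $A$ is not $\rho$-away from $\mathfrak{n}_\gf(W_1)$, so the hypothesis forces $\mathfrak{n}_\gf(W_1) = \gf$. Equivalently, $W_1$ is an ideal of $\gf$, which contradicts simplicity since $0 < \dim W_1 = k < \dim \gf$. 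Hence the statement holds with $\rho^{O_\gf(1)}$ where the implicit exponent is governed by the \L{}ojasiewicz exponent attached to $f$ for each $k$, taken to be the maximum over $k = 1,\dots,n-1$. The only delicate point is verifying real analyticity of $f$ on $\Gr(k,\gf)$, which is handled exactly as in Lemma~\ref{lm:awWwaW}.
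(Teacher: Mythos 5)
Your proof is correct and follows essentially the same route as the paper: rescale and reduce to a finite set via the Lie analogue of Lemma~\ref{lm:FiniteA}, apply the \L{}ojasiewicz inequality to the same analytic function $f$ on $\Gr(k,\gf)\times\gf^n$, and conclude that $W_1$ would have to be an ideal, contradicting simplicity. The only cosmetic difference is that you name $\gf_{W_1}$ the normalizer $\mathfrak{n}_\gf(W_1)$ and you fix $k$ before invoking \L{}ojasiewicz, taking a maximum of exponents at the end, which the paper leaves implicit.
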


\begin{proof}
Without loss of generality, we can assume the norm on $\gf$ to be Euclidean. The statement and proof of Lemma~\ref{lm:FiniteA} remains valid when the word "subalgebra" is replaced by "Lie subalgebra". Actually this fact is implicit in \cite[Lemma 2.5]{Saxce}. Therefore, as in the proof of Lemma~\ref{lm:awWwaW}, we can assume that $A$ is finite of cardinality at most $n$ and contained in $\Ball(0,1)$. Write $A = \ens{a_1,\dotsc,a_n}$.

Suppose the conclusion does not hold, which means there is a linear subspace $W_0$ of dimension $0 < k < n$ such that for all $w \in W_0 \cap \Ball(0,1)$ and all $i = 1,\dotsc,n$, $d(\ad(a_i)w,W_0) < \rho^C$ where $C$ is a large constant to be determined by the use of the \L{}ojasiewicz inequality below. Consider the following real analytic map :
\begin{equation*}
\raisebox{1.2ex}{$\displaystyle f \colon$} \begin{matrix}
& \Gr(k,\gf) \times \gf^n &\to &\R\\
&(W;x_1,\dotsc,x_n) &\mapsto & \displaystyle\sum_{i=1}^n\int_{\raisebox{-0.4ex}{$\scriptstyle W\cap \Ball(0,1)$}} \hspace{-2em} d(\ad(x_i)w,W)^2 \,\dd w
\end{matrix}
\end{equation*}
From the above we have $f(W_0;a_1,\dotsc,a_n) \ll \rho^C$. Application of the \L{}ojasiewicz inequality (Theorem~\ref{thm:Lojasiewicz}) to the compact set $\Gr(k,\gf) \times \Ball(0,1)^n$ gives $W_1 \in \Gr(k,\gf)$ and $b_1,\dotsc,b_n \in \gf$ such that $f(W_1;b_1,\dotsc,b_n) = 0$ and $\forall i = 1, \dotsc, n$, $\norm{a_i - b_i} < \rho$ when the constant $C$ is chosen large enough. The fact $f(W_1;b_1,\dotsc,b_n) = 0$ is equivalent to every $b_i$ being in the Lie subalgebra
\[\gf_{W_1} = \ens{ x\in \gf \mid \ad(x)W_1 \subset W_1}.\]

Now our set $A$ is not $\rho$-away from the Lie subalgebra $\gf_{W_1}$. Hence $\gf_{W_1}$ must be $\gf$, which in turn implies that $W_1$ is an ideal in $\gf$. This contradicts the simplicity of $\gf$.
\end{proof}

\bibliographystyle{abbrv} 
\bibliography{here}

\end{document}